\definecolor{olive}{rgb}{0.3, 0.4, .1}
\definecolor{fore}{RGB}{249,242,215}
\definecolor{back}{RGB}{51,51,51}
\definecolor{title}{RGB}{255,0,90}
\definecolor{dgreen}{rgb}{0.,0.6,0.}
\definecolor{gold}{rgb}{1.,0.84,0.}
\definecolor{JungleGreen}{cmyk}{0.99,0,0.52,0}
\definecolor{BlueGreen}{cmyk}{0.85,0,0.33,0}
\definecolor{RawSienna}{cmyk}{0,0.72,1,0.45}
\definecolor{Magenta}{cmyk}{0,1,0,0}
\newtheorem{proposition}{Proposition}[section]
\newtheorem{theorem}{Theorem}[section]
\newtheorem{corollary}{Corollary}[section]
\newtheorem{lemma}{Lemma}[section]
\newtheorem{remark}{Remark}[section]
\newtheorem{example}{Example}[section]
\numberwithin{equation}{section}
\title[]{The $L^2$-norm of the forward stochastic integral w.r.t. Fractional Brownian motion $H > \frac{1}{2}$}
\author{Alberto OHASHI$^1$}
\author{Francesco RUSSO$^2$}
\address{$1$ Departamento de Matem\'atica, Universidade de Bras\'ilia, 13560-970, Bras\'ilia - Distrito Federal, Brazil}\email{amfohashi@gmail.com}
\address{$2$ ENSTA Paris, Institut Polytechnique de Paris,
 Unit\'e de Math\'ematiques appliqu\'ees, 828, boulevard des Mar\'echaux, F-91120 Palaiseau, France}
 \email{francesco.russo@ensta-paris.fr}
\begin{document}

\begin{abstract}
In this article, we present the exact expression of the $L^2$-norm of the forward stochastic integral driven by the multi-dimensional fractional Brownian motion with parameter $\frac{1}{2} < H < 1$. The class of integrands only requires rather weak integrability conditions compatible w.r.t. a random finite measure whose density is expressed as a second-order polynomial of the underlying driving Gaussian noise. A simple consequence of our results is the exact expression of the $L^2$-norm for the pathwise Young integral. 
\end{abstract}

\maketitle



\section{Introduction}

Let $B = (B^{(1)}, \ldots, B^{(d)})$ be a $d$-dimensional Fractional Brownian motion (henceforth abbreviated by FBM) with exponent $\frac{1}{2}< H < \frac{1}{2}$ on a probability space $(\Omega,\mathcal{F},\mathbb{P})$. That is, $B$ is a $d$-dimensional Gaussian process with covariance

$$\mathbb{E}\big[ B^{(i)}_s B^{(j)}_t \big]= \frac{1}{2}\{t^{2H}+ s^{2H}-|t-s|^{2H}\}\delta_{ij},$$
for $1\le i,j\le d$ and $0\le s,t < \infty$. We set $R(s,t):= \frac{1}{2}\{t^{2H}+ s^{2H}-|t-s|^{2H}\}$ for $(s,t) \in \mathbb{R}_+^2$. In the sequel, increments of paths $t\mapsto f_t$ will be denoted by $f_{s,t}:= f_t-f_s$.

For a given positive constant $T$, let $g:[0,T]\times \mathbb{R}^d\rightarrow \mathbb{R}^d$ be a Borel function, where $\mathbb{R}^d$ is equipped with the usual inner product $\langle \cdot, \cdot \rangle$. For a given $\epsilon>0$, let us denote

\begin{equation}\label{Iminus}
I^-(\epsilon,g(B),dB):=\frac{1}{\epsilon}\int_0^T \langle g(s,B_s), B_{s,s+\epsilon}\rangle ds.
\end{equation}
In this paper, we study the existence of the forward stochastic integral in the sense of stochastic calculus of regularization (see e.g. \cite{russo1993forward} and \cite{Russo_Vallois_Book} and other references therein):
\begin{equation}\label{epsilonconv}
\int_0^T g(t,B_t)d^-B_t:=\lim_{\epsilon\downarrow 0} I^-(\epsilon,g,dB)
\end{equation}
in $L^2(\mathbb{P})$. More importantly, we present the exact expression of the $L^2(\mathbb{P})$-norm of (\ref{epsilonconv}) in terms of an explicitly second order-polynomial of the underlying FBM.
In fact the techniques we develop in this article are not limited to FBM and can be used to treat other Gaussian noises admitting a covariance measure structure in the sense of \cite{kruk2007}. Those processes are finite quadratic variation processes in
the sense of \cite{russo2000stochastic}, where relations between
the quadratic variation and the covariance structure was first
investigated.
For the sake of simplicity, in this paper, we will not discuss the technical aspects of this point further.



There are essentially three different approaches to construct stochastic integrals w.r.t.~FBM with $\frac{1}{2} < H < 1$: pathwise, Malliavin calculus and, more recently, stochastic sewing lemma. Since $B$ has $\gamma$-H\"older continuous paths for $\gamma < H$, then one can make use of a pathwise approach to define a stochastic integral in the sense of Young \cite{young}. Indeed, if a stochastic processes $\{u_t,t\ge 0\}$ whose trajectories are $\lambda$-H\"older continuous with $\lambda >1-H$, then the Riemann-Stieltjes integral $\int_0^T u_sdB_s$ exists for each trajectory (see \cite{young}). Another pathwise approach was proposed by M. Z\"ahle in \cite{zahle} who introduced a generalized Stieltjes integral using the techniques of fractional calculus. The Malliavin calculus approach has been extensively studied by many authors over the last years: the divergence operator associated with the Gaussian space of FBM provides an isometry between the Reproducing Kernel Hilbert space (henceforth abbreviated by RKHS) associated with FBM and the first chaos, and this is precisely the so-called Paley-Wiener integral for the class of deterministic functions in the RKHS. Recall that in the classical Brownian motion case $H=\frac{1}{2}$, the divergence operator coincides with the classical It\^o integral when the integrand is adapted and this motivated many authors to develop a Malliavin calculus approach for a stochastic calculus w.r.t.~FBM driving noise. In this direction, we refer the reader to e.g \cite{viensSK}, \cite{decreusefond}, \cite{alos2001stochastic}, \cite{kruk2007}, \cite{carmona2003} and the monograph \cite{biaginibook} for a very complete list of works in this topic. 

One standard strategy for the obtention of $L^2(\mathbb{P})$-bounds for stochastic forward integrals of the type (\ref{epsilonconv}) and also the similar object computed in terms of Riemann sums is the identification with the divergence operator plus a trace term associated with a Gaussian driving noise $B$. The key point is the multiplication rule between smooth random variables and the increments $B_{s,s+\epsilon}$ (see e.g. Prop. 1.3.3 in \cite{nualart2006}), 

\begin{eqnarray}
\label{st1}Y_s B_{s,s+\epsilon} &=& Y_s \int_0^\infty \mathds{1}_{[s,s+\epsilon]}(r) \boldsymbol{\delta} B_r\\
\label{st2}&=& \int_0^\infty Y_s \mathds{1}_{[s,s+\epsilon]}(r) \boldsymbol{\delta} B_r + \big \langle \mathbf{D}Y_s, \mathds{1}_{[s,s+\epsilon]}  \big \rangle_{\mathcal{H}},
\end{eqnarray}
where $\mathbf{D}$ denotes the Malliavin derivative operator, $\boldsymbol{\delta}$ is the divergence operator and $\langle \cdot, \cdot\rangle_{\mathcal{H}}$ is the inner product of the underlying RKHS $\mathcal{H}$. We refer the reader to e.g. \cite{nualart2006} for the definitions of the basic objects $\mathbf{D}$, $\boldsymbol{\delta}$ and $\mathcal{H}$ above. The passage from (\ref{st1}) to (\ref{st2}) requires very strong regularity conditions from the integrands which prevents this framework of solving SDEs. The connection between the forward stochastic integral in the sense of stochastic calculus via regularization \cite{russo1993forward}, pathwise Young and divergence operator was established in \cite{alos2003} and \cite{russo2007elements}. In this direction, see also \cite{kruk2007}, Chapter 5 in \cite{biaginibook} and \cite{nualart2006} and the monograph \cite{Russo_Vallois_Book}.       

More recently,
Matsuda and Perkowski \cite{matsuda}  have obtained some progress beyond the standard Young pathwise interpretation by using the Mandelbrot-Van Ness representation \cite{mand} in terms of a two-sided Brownian motion. Based on an extension of the stochastic sewing lemma \cite{Le}, \cite{matsuda} has proved the existence of the limit


\begin{equation}\label{Riemann}
\int_0^T g(B_s)dB_s = \lim_{|\Pi|\rightarrow 0}\sum_{t_i \in \Pi} g(B_{t_i})B_{t_i,t_{i+1}}
\end{equation} 
 in $L^p(\mathbb{P})$ for every $p \ge 1$,
 as the mesh $|\Pi|$ of an arbitrary partition $\Pi$ of $[0,T]$ vanishes, where $g$ is bounded.
 The contributions \cite{yaskov} and \cite{torres} proved previously
 existence of (\ref{Riemann}) when $g$ is a bounded variation function. 






This article aims to produce the exact $L^2(\mathbb{P})$-norm of the forward integral (\ref{epsilonconv}) under rather weak conditions of integrand processes of the form $Y_\cdot = g(\cdot, B_\cdot)$, where $g$ is a time-space Borel function satisfying only integrability conditions w.r.t. a random field of the form

\begin{equation}\label{randomfield}
\frac{\partial^2 R}{\partial t \partial s}(s,t)I_{d\times d} + \mathcal{W}(s,t),
\end{equation}  
where $\mathcal{W}(s,t)$ is a second-order polynomial of the underlying Gaussian noise. For a given Borel function $g:[0,T]\times \mathbb{R}^d\rightarrow \mathbb{R}^d$, $(\epsilon,\delta) \in (0,1)^2$ and $Y_\cdot  = g(\cdot, B_\cdot)$, we shall write

\begin{equation}\label{fplit}
\Big \langle I^-(\epsilon,Y,dB), I^-(\delta,Y,dB) \Big\rangle_{L^2(\mathbb{P})} =\mathbb{E}\int_{[0,T]^2 }  \big\langle Y_{s}\otimes Y_{t} , \Lambda^-(\epsilon,\delta;s,t) \big\rangle dsdt
\end{equation}
where $\otimes$ denotes the tensor product in $\mathbb{R}^d$ and 
$ \Lambda^-(\epsilon,\delta; \cdot) = (\Lambda^-(\epsilon,\delta; \cdot))$ with

\begin{equation}\label{Lambdaop}
\Lambda^{-,ij}(\epsilon,\delta;s,t):=\frac{1}{\epsilon\delta}\mathbb{E}\Big[B^{(i)}_{s,s+\epsilon}B^{(j)}_{t,t+\delta}\big| B_s,B_t\Big],
\end{equation}
for $(s,t) \in [0,T]^2$ and $1\le i,j\le d$. The main novel idea presented in this article is to explore the regularity inherited from the
projection operator
$\Lambda^-(\epsilon,\delta; \cdot)$ which allows us to calculate explicitly the $L^2(\mathbb{P})$-norm of the forward stochastic integral under rather weak integrability conditions w.r.t. the random field (\ref{randomfield}). The investigation of the double limit $(\epsilon,\delta)\downarrow 0$ (simultaneously to zero) of (\ref{fplit}) and (\ref{Lambdaop}) provides the existence of (\ref{epsilonconv}) and (\ref{randomfield}), respectively, and as a by-product it gives the exact $L^2(\mathbb{P})$-norm of the forward stochastic integral.



Theorem \ref{mainTH2} presents the main result of this article: the explicit description of the $L^2(\mathbb{P})$-norm of (\ref{epsilonconv}) under rather weak regularity conditions on integrands of the form $Y_\cdot = g(\cdot, B_\cdot)$. The fundamental object is the random field $\mathcal{W}$ described in (\ref{Wproc}) and constructed in Theorem \ref{mainTH1}. The process $\mathcal{W}$ is a second-order polynomial of the underlying FBM and it is a $q$-integrable random field for $1\le q < \min \Big \{ \frac{1}{2H-1}; \frac{1}{2-2H}\Big\}$. Based on the $q$-integrability of $\mathcal{W}$, our condition on $Y_\cdot= g(\cdot, B_\cdot)$ in Theorem \ref{mainTH2}

\begin{equation}\label{IntC}
(s,t)\mapsto Y_s\otimes Y_t \in L^{p}(\Omega\times [0,T]^2; \mathbb{R}^{d\times d})
\end{equation}
depends on a integrability parameter $p \in \big(\max\big\{\frac{1}{2-2H}; \frac{1}{2H-1} \big\}, + \infty\big]$ which is the conjugate exponent of $q$.
Clearly $Y$ is allowed  to be  unbounded,
  see in particular Example \ref{Youngre},
but also not locally bounded, therefore discontinuous.

By means of a standard completion argument, we stress that the exact $L^2(\mathbb{P})$-norm expression of the forward stochastic integral given in Theorem \ref{mainTH2} allows us to extend (\ref{epsilonconv}) to a larger class of integrands as elements of a natural Hilbert space of processes equipped with the inner-product inherited from the right-hand side of (\ref{isoFORMULA}). We refer the reader to \cite{OhashiRusso2} for this procedure in the singular case $\frac{1}{4} < H < \frac{1}{2}$. In the present regular case $\frac{1}{2} < H < 1$ of this article, the resulting Hilbert space will be a space which contains not only integrable functions but also distributions in time and space. We leave the complete analysis of this extension and the associated stochastic calculus arising from Theorem \ref{mainTH2} to a future project. A step in this direction was done in
\cite{hairerLi}.

Finally, we mention that we have chosen to consider only state-dependent integrands, though we believe an extension of Theorem \ref{mainTH2} to more general integrands should be possible.

\

\noindent \textbf{Notation}: In this article, $0 < T < \infty$ is a positive finite terminal time,

$$[0,T]^2_\star  := \{(s,t) \in [0,T]^2; s\neq t, s>0, t>0\},$$
$$\Delta_T:=\{(s,t) \in [0,T]^2_\star; 0< s < t\le T\},$$
and the increment of a one-parameter function $f$ is denoted by $f_{s,t}:= f_t - f_s$ for $(s,t) \in \mathbb{R}^2_+$ Throughout this article, we write $(\epsilon,\delta)\downarrow 0$ to denote $\epsilon \downarrow 0$ and $\delta \downarrow 0$ \textit{simultaneously}. We further write $A\lesssim B$ for two positive quantities to express an estimate of the form $A \le C B$, where $C$ is a generic constant which may differ from line to line. To emphasize the dependence of $C$ on some parameters $a$, $b$, . . ., we write $A\lesssim_{a,b} B$. 

The sign function is denoted by $\text{sgn}$. We also write $a\wedge b = \min\{a;b\}$ and $a\vee b = \max\{a;b\}$ for any real numbers $a,b$. Moreover, for any two natural numbers $(i,j)$, we write $\delta_{ij} = 1$ when $i=j$ and $\delta_{ij}=0$, otherwise. The identity matrix of size $d$ is denoted by $I_{d\times d}$ and $\top$ denotes the transpose of a matrix.

If $a = (a^i)^d_{i=1}, b = (b^j)_{j=1}^d \in \mathbb{R}^d$, then
the matrix $(a^ib^j)_{1\le i,j\le d}$ is the  tensor product 
$a\otimes b$.
$\langle \cdot, \cdot \rangle$ denotes the standard Frobenius inner product on the space of $r\times q$-matrices for $r,q\ge 1$. Whenever clear from the context, $|\cdot|$ denotes a norm of a finite-dimension vector space. For a given column vector $a$, $a^\top$ denotes the transpose of $a$. For a random variable $X$, we denote $\|X\|^p_p:= \mathbb{E}|X|^p$ for $1\le p < \infty$. Let $L^p(\Omega\times [0,T]^2; \mathbb{R}^n)$ be the $L^p$ space of $\mathbb{R}^n$-valued two-parameters processes w.r.t to the measure $d \mathbb{P}\times dsdt$, $n\ge 1$ and $1\le p \le \infty$. With a slight abuse of notation, we write $\| Z\|^p_p = \mathbb{E}\int_{[0,T]^2} |Z(s,t)|^pdsdt$ whenever clear from the context.

If $\theta = (\theta_i)_{i=1}^d$ and $\xi = (\xi_j)_{j=1}^d$ are two column random vectors, then the joint covariance matrix of $(\theta,\xi)$ is denoted by

$$\text{Cov} \big( \theta, \xi \big): = \text{cov}\big(\theta_i, \xi_j \big); 1\le i\le k,~1\le j\le \ell,$$
where $\text{cov}$ is the covariance operation. We will denote

$$\varphi(s,t):= R(s,t) - v(s),$$
for $(s,t) \in \Delta_T$, where $v(s) := R(s,s)$.

\section{Main results}
This section presents and discusses the main results of this paper. In the sequel, $\Theta_{s,t}$ is the determinant of the covariance matrix of the Gaussian vector $(B^{(1)}_s,B^{(1)}_t)$. The first main result of this paper is the following. 

\begin{theorem}\label{mainTH1}
If $\frac{1}{2} < H< 1$, then 
\begin{eqnarray}
\label{limIN}\Lambda(s,t)&:=&\lim_{(\epsilon,\delta)\downarrow 0}\frac{1}{\epsilon\delta}\mathbb{E}\Big[ B_{s,s+\epsilon}\otimes  B_{t,t+\delta}| B_{s},B_{t}\Big]
\\
\nonumber&=& \mathcal{W}(s,t) + \frac{\partial^2 R}{\partial t \partial s}(s,t)I_{d\times d }; \quad s\neq t. 
\end{eqnarray}
The limit in (\ref{limIN}) is considered in $L^q(\Omega\times [0,T]^2; \mathbb{R}^{d\times d})$ for $1\le q < \min \Big \{ \frac{1}{2H-1}; \frac{1}{2-2H}\Big\}$, where $\mathcal{W} = (\mathcal{W}^{ij})$ is given by 

\begin{eqnarray}
\label{Wproc}\mathcal{W}^{ij}(s,t)&:=& \lambda_{11}(s,t)\lambda_{21}(s,t)\big(B^{(i)}_sB^{(j)}_s-\text{cov}(B^{(i)}_s; B^{(j)}_s)\big) \\
\nonumber&+& \lambda_{11}(s,t)\lambda_{22}(s,t) \big( B^{(i)}_s B^{(j)}_t - \text{cov}(B^{(i)}_s; B^{(j)}_t) \big)\\
\nonumber&+& \lambda_{12}(s,t) \lambda_{21}(s,t) \big( B^{(i)}_t B^{(j)}_s - \text{cov}(B^{(i)}_t; B^{(j)}_s) \big)\\
\nonumber&+& \lambda_{12}(s,t)\lambda_{22}(s,t)\big(B^{(i)}_t B^{(j)}_t - \text{cov}(B^{(i)}_t; B^{(j)}_t) \big),
\end{eqnarray}
for $1\le i,j\le d$, where
$$\lambda_{11}(s,t) := \frac{1}{\Theta_{s,t}} \Big\{\frac{1}{2}v'(s) v(t) - R(s,t)\frac{\partial R}{\partial s}
 (s,t) \Big\},$$
$$\lambda_{12}(s,t):= \frac{1}{\Theta_{s,t}} \Big\{\frac{\partial R}{\partial s}(s,t)
 v(s) - R(s,t)\frac{1}{2}v'(s) \Big\},$$
$$\lambda_{21}(s,t):= \frac{1}{\Theta_{s,t}} \Big\{\frac{\partial R}{\partial t}
(t,s) v(t) - R(s,t)\frac{1}{2}v'(t) \Big\},$$
$$\lambda_{22}(s,t):=\frac{1}{\Theta_{s,t}} \Big\{\frac{1}{2}v'(t) v(s) - R(s,t)\frac{\partial R}{\partial t}
 (t,s) \Big\},$$
for $(s,t) \in [0,T]^2_\star$.   

\end{theorem}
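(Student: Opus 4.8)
The plan is to turn the conditional expectation (\ref{Lambdaop}) into a finite-dimensional Gaussian regression problem, identify the limit by passing $(\epsilon,\delta)\downarrow 0$ in the regression coefficients, and only then confront the genuinely analytic part: promoting the pointwise limit to convergence in $L^q(\Omega\times[0,T]^2;\mathbb{R}^{d\times d})$. Fix $(s,t)\in[0,T]^2_\star$. Since the scalar components $B^{(1)},\dots,B^{(d)}$ are independent, I would separate two cases. For $i\neq j$ the Gaussian families generated by $B^{(i)}$ and $B^{(j)}$ are independent, so
\begin{equation*}
\mathbb{E}\big[B^{(i)}_{s,s+\epsilon}B^{(j)}_{t,t+\delta}\,\big|\,B_s,B_t\big]=\mathbb{E}\big[B^{(i)}_{s,s+\epsilon}\,\big|\,B^{(i)}_s,B^{(i)}_t\big]\,\mathbb{E}\big[B^{(j)}_{t,t+\delta}\,\big|\,B^{(j)}_s,B^{(j)}_t\big],
\end{equation*}
and no trace term appears. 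For $i=j$ I would use the orthogonal decomposition of a Gaussian vector: writing $B^{(i)}_{s,s+\epsilon}=P^\epsilon+N^\epsilon$ and $B^{(i)}_{t,t+\delta}=Q^\delta+M^\delta$, where $P^\epsilon,Q^\delta$ are the $L^2$-projections onto $\mathrm{span}(B^{(i)}_s,B^{(i)}_t)$ and $N^\epsilon,M^\delta$ are the residuals (Gaussian, independent of $(B_s,B_t)$), one gets
\begin{equation*}
\mathbb{E}\big[B^{(i)}_{s,s+\epsilon}B^{(i)}_{t,t+\delta}\,\big|\,B_s,B_t\big]=P^\epsilon Q^\delta+\text{Cov}(N^\epsilon,M^\delta),
\end{equation*}
the second summand being the deterministic \emph{trace term} that will generate the $\frac{\partial^2R}{\partial t\partial s}I_{d\times d}$ contribution.

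The projections are explicit: $P^\epsilon=\alpha_\epsilon B^{(i)}_s+\beta_\epsilon B^{(i)}_t$, where $(\alpha_\epsilon,\beta_\epsilon)$ solve the $2\times2$ normal equations with Gram matrix $\begin{pmatrix}v(s)&R(s,t)\\ R(s,t)&v(t)\end{pmatrix}$, whose determinant is exactly $\Theta_{s,t}$, and right-hand side $\big(\text{Cov}(B^{(i)}_{s,s+\epsilon},B^{(i)}_s),\,\text{Cov}(B^{(i)}_{s,s+\epsilon},B^{(i)}_t)\big)$. I would then compute the three elementary limits
\begin{equation*}
\tfrac1\epsilon\text{Cov}(B^{(i)}_{s,s+\epsilon},B^{(i)}_s)\to\tfrac12 v'(s),\qquad \tfrac1\epsilon\text{Cov}(B^{(i)}_{s,s+\epsilon},B^{(i)}_t)\to\tfrac{\partial R}{\partial s}(s,t),\qquad \tfrac{1}{\epsilon\delta}\text{Cov}(B^{(i)}_{s,s+\epsilon},B^{(i)}_{t,t+\delta})\to\tfrac{\partial^2R}{\partial s\partial t}(s,t),
\end{equation*}
where the first uses $\epsilon^{2H-1}\to0$ (here $H>\tfrac12$ is essential). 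Cramer's rule then sends $\tfrac1\epsilon\alpha_\epsilon\to\lambda_{11}(s,t)$, $\tfrac1\epsilon\beta_\epsilon\to\lambda_{12}(s,t)$, and symmetrically $\tfrac1\delta$ times the $t$-projection coefficients to $\lambda_{21},\lambda_{22}$. Expanding $\tfrac1{\epsilon\delta}P^\epsilon Q^\delta$ and using $\tfrac1{\epsilon\delta}\text{Cov}(N^\epsilon,M^\delta)=\tfrac1{\epsilon\delta}\text{Cov}(B^{(i)}_{s,s+\epsilon},B^{(i)}_{t,t+\delta})-\tfrac1{\epsilon\delta}\text{Cov}(P^\epsilon,Q^\delta)$, the covariances of the projection products cancel exactly against the centering constants, and one recovers precisely the centered quadratic expression (\ref{Wproc}) plus $\frac{\partial^2R}{\partial s\partial t}(s,t)\,\delta_{ij}$; for $i\neq j$ the trace term is absent and the constants $\text{cov}(B^{(i)}_\cdot,B^{(j)}_\cdot)$ vanish, matching (\ref{Wproc}) with $\delta_{ij}=0$. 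This establishes the pointwise-in-$(s,t)$ identity.

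The analytic core, which I expect to be the main obstacle, is upgrading this pointwise limit to $L^q(\Omega\times[0,T]^2)$ convergence and pinning down the admissible range of $q$. Because $\mathcal{W}(s,t)$ lives in the second Wiener chaos, Gaussian hypercontractivity makes every $\|\mathcal{W}^{ij}(s,t)\|_{L^q(\Omega)}$ comparable to $\|\mathcal{W}^{ij}(s,t)\|_{L^2(\Omega)}$, so the integrability of the limit reduces to a deterministic second-moment estimate. A Taylor expansion near the diagonal gives $\Theta_{s,t}\sim s^{2H}|t-s|^{2H}$ and $\lambda_{11},\lambda_{21}\sim -H|t-s|^{-1}$, $\lambda_{12},\lambda_{22}\sim H|t-s|^{-1}$, so each coefficient blows up like $|t-s|^{-1}$ and the naive size of $\mathcal{W}$ is $|t-s|^{-2}$. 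The saving cancellation is that the leading coefficients combine into $\frac{H^2}{|t-s|^2}\big(B_{s,t}\otimes B_{s,t}-\mathbb{E}[B_{s,t}\otimes B_{s,t}]\big)$, whose $L^2(\Omega)$-norm is of order $|t-s|^{-2}\,\mathbb{E}|B_{s,t}|^2\sim|t-s|^{2H-2}$, matching the singularity $\frac{\partial^2R}{\partial s\partial t}\sim|t-s|^{2H-2}$ of the diagonal term. This yields $\mathcal{W}\in L^q$ exactly for $q<\frac1{2-2H}$, and I would dispatch the corner $(s,t)\to(0,0)$ and the axes by the self-similarity of the FBM, which shows they impose no stronger constraint.

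Finally, to justify the passage to the limit I would produce a dominating function: bounding $\tfrac1{\epsilon\delta}\mathbb{E}\big[B_{s,s+\epsilon}\otimes B_{t,t+\delta}\mid B_s,B_t\big]$ uniformly in $(\epsilon,\delta)$ by a fixed $L^q$-function of $(s,t)$ and invoking Vitali's theorem together with the a.e. convergence already established. I expect the second restriction $q<\frac1{2H-1}$ to enter exactly here, through the uniform control of the pre-limit on the region $\{|t-s|\lesssim\epsilon\vee\delta\}$ where the increments $[s,s+\epsilon]$ and $[t,t+\delta]$ nearly touch or overlap: there the $\epsilon^{2H-1},\delta^{2H-1}$ corrections in the regression coefficients and the overlap contributions to $\text{Cov}(B_{s,s+\epsilon},B_{t,t+\delta})$ produce a $|t-s|^{-(2H-1)}$-type singularity whose uniform integrability forces $q<\frac1{2H-1}$. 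Securing this uniform domination near the diagonal, rather than the algebra of the limit itself, is the delicate part of the argument.
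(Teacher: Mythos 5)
Your first two paragraphs are correct and coincide with the paper's own route: the orthogonal decomposition of the Gaussian vector is exactly the linear-regression representation of Lemma \ref{Sprerepr}, the limits of the normalized covariances are Remark \ref{pointwiseR}, and Cramer's rule yields the pointwise a.s.\ identity of Lemma \ref{Lambdalimit}. You also correctly spot the saving cancellation (regrouping the $\lambda$'s so that increments $B_{s,t}$ appear; in the paper this is the passage to $\eta_{11}=\lambda_{11}+\lambda_{12}$, $\eta_{21}=\lambda_{21}+\lambda_{22}$ in (\ref{INTREP1})--(\ref{INTREP2})), and your claim that the limit itself lies in $L^q$ for all $q<\frac{1}{2-2H}$ agrees with Proposition \ref{West} and Remark \ref{FR}.

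The gap is in your final paragraph, which is where the theorem actually lives: you never construct the dominating functions, you only announce that a uniform domination exists and conjecture where the restriction $q<\frac{1}{2H-1}$ comes from --- and the conjecture is wrong. The near-diagonal/overlap region $\{|t-s|\lesssim\epsilon\vee\delta\}$ produces a singularity of order $|t-s|^{2H-2}$ (equivalently $(\epsilon\wedge\delta)^{2H-2}$ on a strip of width $\epsilon\wedge\delta$), whose $q$-integrability is exactly the \emph{other} constraint $q<\frac{1}{2-2H}$: this is the content of Lemma \ref{diag} and Proposition \ref{deterProp} for the deterministic part, and of the $|t-s|^{2H-2}$ terms in Lemmas \ref{lemmaf1}--\ref{lemmaf3} for the random part. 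The constraint $q<\frac{1}{2H-1}$ enters instead at the axis $s\downarrow 0$ --- precisely the region you dismiss ``by self-similarity''. Uniformly in $\epsilon$, the only available bound on the normalized regression coefficients is of the type $|\vartheta_1(\epsilon/s)|/\epsilon\le 4H/s$ (Lemma \ref{growth}); combined with $\Theta_{s,t}^{-2}\lesssim s^{-4H}|t-s|^{-4H}$ (Lemma \ref{fhlemma}), $\varphi(s,t)\lesssim s\,t^{2H-1}$ (estimate (\ref{Exf5})) and the bound of Lemma \ref{ddeltaLEMMA}, this leaves terms of size $s^{1-2H}(t^{2H-2}+1)$ in the dominating functions of Lemmas \ref{lemmaf1} and \ref{lemmaf2}, and their $q$-integrability over $\Delta_T$ forces $q(2H-1)<1$. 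Note that this degradation is a feature of the pre-limit family, not of the limit: $\|\mathcal{W}(s,t)\|_q\lesssim s^{H-1}|t-s|^{H-1}+|t-s|^{2H-2}$ has a milder axis singularity, which is exactly why Remark \ref{FR} leaves open whether the range $q<\frac{1}{2H-1}$ is sharp. So to complete your argument you would still have to prove the analogues of Lemmas \ref{growth}, \ref{ddeltaLEMMA} and \ref{lemmaf1}--\ref{lemmaf3}, i.e.\ uniform-in-$(\epsilon,\delta)$ bounds of the coefficient products by fixed functions in $L^q(\Delta_T)$, and the place where these bounds are delicate is the axis (degeneracy of the conditioning Gram matrix as $s\downarrow 0$), not the diagonal overlap you single out.
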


The two-parameter process $\mathcal{W}$ on $[0,T]^2_\star$ might be interpreted as a zero-mean product of (non-Markovian) Nelson-type derivatives \cite{nelson} of the form  

\begin{equation}
\mathbf{D}^{\mathcal{A}_{s,t}} B_s \otimes \mathbf{D}^{\mathcal{A}_{s,t}} B_t - \mathbb{E}\Big[ \mathbf{D}^{\mathcal{A}_{s,t}} B_s \otimes \mathbf{D}^{\mathcal{A}_{s,t}} B_t\Big];\quad (s,t) \in [0,T]^2_\star.
\end{equation}
Here $\mathcal{A}_{s,t}$ is the sigma-algebra generated by the pair $(B_s,B_t)$ and $\mathbf{D}^{\mathcal{A}_{s,t}} $ is the \textit{stochastic derivative} w.r.t. $\mathcal{A}_{s,t}$ in the sense of Darses and Nourdin \cite{darses} and Darses, Nourdin and Peccati \cite{darses1} defined by the almost sure limit
$$ \mathbf{D}^{\mathcal{A}_{s,t}} B_s:= \lim_{\epsilon\downarrow 0} \frac{1}{\epsilon} \mathbb{E}\big[ B_{s,s+\epsilon} |B_s,B_t\big],$$
for every $(s,t) \in [0,T]^2_\star$. 

Theorem \ref{mainTH1} plays an important role in the obtention of exact $L^2(\mathbb{P})$-norm of the stochastic forward integrals as demonstrated by the following result. Let $\mathcal{H}$ be the RKHS equipped with the inner product 


\begin{equation}\label{rkhsnorm}
\langle f,g \rangle _{\mathcal{H}} := \int_{[0,T]^2} \langle f_s, g_t\rangle \frac{\partial^2 R}{\partial t \partial s}(s,t)dsdt,
\end{equation}
for $f,g:[0,T]\rightarrow \mathbb{R}^d \in \mathcal{H}$. See e.g. \cite{kruk2007} and \cite{pipiras} for further details on the space $\mathcal{H}$. 

In the sequel, in order to shorten notation, for $Y_\cdot = g(\cdot,B_\cdot)$, we denote 
$$Y^{\otimes_2}(s,t):= Y_s\otimes Y_t,$$ 
for $(s,t) \in [0,T]^2$.

Below, we recall that $I^-(\epsilon,Y,dB)$ is the forward approximation given by (\ref{Iminus}).  
\begin{theorem}\label{mainTH2}


Let $g:[0,T]\times \mathbb{R}^d\rightarrow \mathbb{R}^d$ be a Borel function and $Y_\cdot = g(\cdot, B_\cdot)$. Assume that $Y^{\otimes_2}\in L^p(\Omega\times [0,T]^2;\mathbb{R}^{d\times d})$ for $\max\big\{\frac{1}{2-2H}; \frac{1}{2H-1} \big\}<  p  \le \infty$. Then, $\lim_{\epsilon\downarrow 0}I^{-}(\epsilon,Y,B)$ exists in $L^2(\mathbb{P})$ and

\begin{eqnarray}\label{isoFORMULA}
\Big\| \int_0^T Y_td^{-}B_t\Big\|^2_2 &=& \mathbb{E}\| Y\|^2_{\mathcal{H}} + \mathbb{E}\int_{[0,T]^2} \big\langle Y^{\otimes_2}(s,t), \mathcal{W}(s,t)\big\rangle dsdt\\
\nonumber&=& \mathbb{E}\int_{[0,T]^2} \big\langle Y^{\otimes_2}(s,t), \Lambda(s,t)\big\rangle dsdt,
\end{eqnarray}
where $\Lambda$ has been defined in \eqref{limIN}.

\end{theorem}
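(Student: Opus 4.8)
The plan is to reduce the entire statement to the $L^q$-convergence already furnished by Theorem \ref{mainTH1}, exploiting the bilinear identity (\ref{fplit}) together with the Hölder duality between the conjugate exponents $p$ and $q$ and the completeness of $L^2(\mathbb{P})$. The key observation is that both the squared norm and, more generally, the cross inner products $\langle I^-(\epsilon,Y,dB),I^-(\delta,Y,dB)\rangle_{L^2(\mathbb{P})}$ are \emph{linear} functionals of the projection kernel $\Lambda^-(\epsilon,\delta;\cdot)$ tested against the \emph{fixed} field $Y^{\otimes_2}\in L^p(\Omega\times[0,T]^2)$. Once $\Lambda^-(\epsilon,\delta)\to\Lambda$ in $L^q$, these functionals converge, and this single fact yields simultaneously the Cauchy property of the family $\{I^-(\epsilon,Y,dB)\}_\epsilon$ and the explicit value of the limiting norm.

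First I would record the bilinear representation (\ref{fplit}). Expanding the product of the two Riemann-type averages and applying Fubini gives
\[
\langle I^-(\epsilon,Y,dB), I^-(\delta,Y,dB)\rangle_{L^2(\mathbb{P})} = \frac{1}{\epsilon\delta}\,\mathbb{E}\int_{[0,T]^2}\sum_{i,j} Y_s^i\, Y_t^j\, B^{(i)}_{s,s+\epsilon} B^{(j)}_{t,t+\delta}\,ds\,dt,
\]
and, since $Y_s=g(s,B_s)$ and $Y_t=g(t,B_t)$ are $\sigma(B_s,B_t)$-measurable, the tower property applied inside the integral replaces $B^{(i)}_{s,s+\epsilon}B^{(j)}_{t,t+\delta}$ by its conditional expectation $\mathbb{E}[B^{(i)}_{s,s+\epsilon}B^{(j)}_{t,t+\delta}\mid B_s,B_t]=\epsilon\delta\,\Lambda^{-,ij}(\epsilon,\delta;s,t)$, producing exactly (\ref{fplit}). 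The Fubini step must be justified: for fixed $(\epsilon,\delta)$ the kernel $\Lambda^-(\epsilon,\delta)$ is a second-order polynomial in the Gaussian pair $(B_s,B_t)$ with coefficients integrable over $[0,T]^2$, hence it lies in $L^q(\Omega\times[0,T]^2)$, and Hölder against $Y^{\otimes_2}\in L^p$ bounds the double integral. Here $\tfrac1p+\tfrac1q=1$, and one checks directly that the admissible range $q<\min\{\frac{1}{2H-1};\frac{1}{2-2H}\}$ of Theorem \ref{mainTH1} has conjugate exponent exactly $p>\max\{\frac{1}{2-2H};\frac{1}{2H-1}\}$ (treating the cases $H\lessgtr \tfrac34$ separately), so the hypothesis on $Y$ is precisely the dual of the conclusion of Theorem \ref{mainTH1}.

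With (\ref{fplit}) in hand, I would pass to the limit. By Hölder,
\[
\Big|\mathbb{E}\!\int_{[0,T]^2}\!\langle Y^{\otimes_2}, \Lambda^-(\epsilon,\delta)-\Lambda\rangle\,ds\,dt\Big| \le \|Y^{\otimes_2}\|_{p}\,\|\Lambda^-(\epsilon,\delta)-\Lambda\|_{q},
\]
whose right-hand side tends to $0$ as $(\epsilon,\delta)\downarrow0$ by Theorem \ref{mainTH1}. Hence $\langle I^-(\epsilon,Y,dB),I^-(\delta,Y,dB)\rangle_{L^2(\mathbb{P})}\to L:=\mathbb{E}\int_{[0,T]^2}\langle Y^{\otimes_2},\Lambda\rangle\,ds\,dt$; taking $\delta=\epsilon$ shows $\|I^-(\epsilon,Y,dB)\|_2^2\to L$ as well, and therefore
\[
\|I^-(\epsilon,Y,dB)-I^-(\delta,Y,dB)\|_2^2 = \|I^-(\epsilon)\|_2^2 - 2\langle I^-(\epsilon),I^-(\delta)\rangle + \|I^-(\delta)\|_2^2 \;\longrightarrow\; L-2L+L=0
\]
as $\epsilon,\delta\downarrow0$. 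This is the Cauchy property, and completeness of $L^2(\mathbb{P})$ yields the existence of $\int_0^T Y_t\,d^-B_t$ together with $\|\int_0^T Y_t\,d^-B_t\|_2^2=L$.

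Finally I would split $\Lambda=\mathcal{W}+\frac{\partial^2R}{\partial t\partial s}I_{d\times d}$ from (\ref{limIN}) and use that the Frobenius pairing satisfies $\langle Y_s\otimes Y_t, I_{d\times d}\rangle=\langle Y_s,Y_t\rangle$, so that the second piece reproduces $\mathbb{E}\|Y\|_{\mathcal{H}}^2$ through the definition (\ref{rkhsnorm}); this gives the two displayed forms of (\ref{isoFORMULA}). The exclusion of the diagonal and the axes in $[0,T]^2_\star$ is immaterial since those sets are Lebesgue-null in $[0,T]^2$. The main obstacle is not this soft functional-analytic wrapper but the integrability that makes it run: one must guarantee that $\Lambda^-(\epsilon,\delta)$ genuinely lies in $L^q(\Omega\times[0,T]^2)$ for fixed $(\epsilon,\delta)$ and, more delicately, that the $L^q$-norms remain bounded as $(\epsilon,\delta)\downarrow0$ so that the passage to the limit is legitimate — both of which are exactly the content (and the hard part) of Theorem \ref{mainTH1}. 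I would also verify that no spurious diagonal contribution survives the double limit, i.e. that the \emph{simultaneous}-limit convergence of Theorem \ref{mainTH1} couples $\epsilon$ and $\delta$ correctly in the cross term.
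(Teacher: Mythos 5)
Your proposal is correct and follows essentially the same route as the paper: the paper's proof also consists of applying H\"older's inequality with the conjugate pair $(p,q)$ to the identity (\ref{fplit}) and invoking the $L^q$-convergence $\Lambda^-(\epsilon,\delta)\to\Lambda$ from Theorem \ref{mainTH1}. The only difference is that you spell out details the paper leaves implicit — the tower-property derivation of (\ref{fplit}), the polarization/Cauchy argument giving existence of the limit in $L^2(\mathbb{P})$, and the splitting $\Lambda=\mathcal{W}+\frac{\partial^2R}{\partial t\partial s}I_{d\times d}$ yielding the two displayed forms — all of which are standard and correctly executed.
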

\begin{corollary}\label{cormainTH2}


Let $g:[0,T]\times \mathbb{R}^d\rightarrow \mathbb{R}^d$ be a Borel function and $Y_\cdot = g(\cdot, B_\cdot)$. Assume that $Y^{\otimes_2} \in L^p(\Omega\times [0,T]^2;\mathbb{R}^{d\times d})$ for $\max\big\{\frac{1}{2-2H}; \frac{1}{2H-1} \big\}<  p  \le \infty$ and let $1\le q < \min \Big \{ \frac{1}{2-2H}; \frac{1}{2H-1}\Big\}$ be the conjugate exponent of $p$. Then, 
\begin{equation}\label{upperbound}
\Big\| \int_u^v Y_rd^{-}B_r\Big\|^2_2 \lesssim_{H,q,d} \|Y^{\otimes_2 }\|_p|v-u|^{2(H-1) + \frac{2}{q}},
\end{equation}
for every $(u,v) \in [0,T]^2$. 
\end{corollary}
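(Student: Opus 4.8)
The plan is to derive the bound in Corollary~\ref{cormainTH2} directly from the exact $L^2(\mathbb{P})$-norm expression \eqref{isoFORMULA} of Theorem~\ref{mainTH2}, restricted to the subinterval $[u,v]$. Applying Theorem~\ref{mainTH2} with the integration domain $[u,v]^2$ in place of $[0,T]^2$ gives
\begin{equation*}
\Big\| \int_u^v Y_r d^{-}B_r\Big\|^2_2 = \mathbb{E}\int_{[u,v]^2} \big\langle Y^{\otimes_2}(s,t), \Lambda(s,t)\big\rangle dsdt.
\end{equation*}
The strategy is then to bound the right-hand side by a H\"older inequality in the variables $(\omega,s,t)$, splitting $Y^{\otimes_2}$ against $\Lambda$ with the conjugate exponents $p$ and $q$. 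This reduces the problem to controlling $\|\Lambda\|_{L^q([u,v]^2)}$ and producing the correct power of $|v-u|$.

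First I would write, via H\"older's inequality with exponents $p$ and $q$,
\begin{equation*}
\mathbb{E}\int_{[u,v]^2} \big\langle Y^{\otimes_2}, \Lambda\big\rangle dsdt \le \|Y^{\otimes_2}\|_p\, \Big(\mathbb{E}\int_{[u,v]^2} |\Lambda(s,t)|^q dsdt\Big)^{1/q}.
\end{equation*}
The key task is to estimate $\mathbb{E}|\Lambda(s,t)|^q$ pointwise and then integrate over $[u,v]^2$. By Theorem~\ref{mainTH1}, $\Lambda = \mathcal{W} + \frac{\partial^2 R}{\partial t\partial s}(s,t)I_{d\times d}$, so I would bound the two contributions separately. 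For the deterministic piece, $\frac{\partial^2 R}{\partial t\partial s}(s,t) = H(2H-1)|t-s|^{2H-2}$, so its $L^q([u,v]^2)$ contribution is governed by $\int_{[u,v]^2}|t-s|^{(2H-2)q}dsdt$, which converges precisely because $q < \frac{1}{2-2H}$ and scales like $|v-u|^{(2H-2)+\frac{2}{q}}$ by a change of variables. For the random field $\mathcal{W}$, I would extract $q$-th moment bounds on each of the four coefficient products $\lambda_{ab}\lambda_{cd}$ times the centered Gaussian quadratic terms; these are the estimates underlying the $q$-integrability asserted in Theorem~\ref{mainTH1}. The exponent $\frac{1}{2H-1}$ in the constraint on $q$ arises from the singularity of $\Theta_{s,t}^{-1}$ (and hence of the $\lambda_{ab}$) near the diagonal, while $\frac{1}{2-2H}$ controls the off-diagonal behaviour; together they reproduce the claimed homogeneity.

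The decisive step is the scaling argument that converts these pointwise singularity estimates into the explicit power $|v-u|^{2(H-1)+\frac{2}{q}}$. I would use the self-similarity of FBM: the relevant kernels $\frac{\partial^2 R}{\partial t\partial s}$, $\Theta_{s,t}$, $R$ and the $\lambda_{ab}$ are all homogeneous under the rescaling $(s,t)\mapsto (\lambda s,\lambda t)$, so that substituting $s = u + (v-u)\tilde s$, $t = u + (v-u)\tilde t$ produces a factor of $|v-u|$ raised to the homogeneity degree of $|\Lambda|^q$, plus the Jacobian factor $|v-u|^2$ from $dsdt$. Taking the $q$-th root yields the exponent $2(H-1)+\frac{2}{q}$. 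The main obstacle I anticipate is that the scaling is not exactly homogeneous when $u\neq 0$, since $R(s,t)$ and $v(s)=R(s,s)=s^{2H}$ are only homogeneous about the origin; the quantities $\lambda_{ab}$ depend on the absolute positions $s,t$ and not merely on $t-s$. Consequently I would need to bound $\mathbb{E}|\mathcal{W}(s,t)|^q$ uniformly in the base point and show that the dominant singular behaviour depends only on $|t-s|$, so that the $L^q$-integral is controlled by $\int_{[u,v]^2}|t-s|^{-\alpha q}dsdt$ for the appropriate $\alpha$, independently of $u$. Verifying that the near-diagonal singularity of the $\lambda_{ab}$ genuinely behaves like a function of $|t-s|$ alone, uniformly over $[0,T]$, is the technical heart of the estimate and where the constant's dependence on $H,q,d$ enters.
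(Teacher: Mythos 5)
Your overall architecture matches the paper's: apply Theorem \ref{mainTH2} to the integral over $[u,v]$, use Cauchy--Schwarz and H\"older with the conjugate pair $(p,q)$, and reduce everything to an $L^q(\Omega\times[u,v]^2)$ estimate for $\Lambda$. The deterministic piece is handled correctly. The gap is in the estimate you designate as the technical heart: it is \emph{false} that $\mathbb{E}|\mathcal{W}(s,t)|^q$ is bounded by a function of $|t-s|$ alone, uniformly in the base point. The field $\mathcal{W}$ is singular not only near the diagonal but also near the axes $\{s\wedge t=0\}$. Concretely, for $0<s<t$ one has (in the notation of \eqref{INTREP1} and of the proof of Proposition \ref{West}) $\Theta_{s,t}\approx s^{2H}|t-s|^{2H}$, $\varphi(s,t)\approx H t^{2H-1}s$ as $s\downarrow 0$ with $t$ fixed, whence $\eta_{11}(s,t)\sim H s^{-1}$ and $\eta_{22}(s,t)\sim H|t-s|^{-1}$; therefore the cross term $\eta_{11}(s,t)\eta_{22}(s,t)\big(B^{(i)}_s B^{(j)}_{s,t}-\varphi(s,t)\delta_{ij}\big)$ has $L^q(\mathbb{P})$-norm of order $s^{H-1}|t-s|^{H-1}$, which diverges as $s\downarrow 0$ while $|t-s|$ stays bounded away from zero. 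No cancellation with the other three terms can occur, since in this regime their norms remain bounded or vanish. This is precisely why the paper's key estimate (Proposition \ref{West}) has two terms, $\|\Lambda(s,t)\|_q\lesssim_{H,d,q}|t-s|^{2H-2}+(s\wedge t)^{H-1}|t-s|^{H-1}$: the position-dependent axis term cannot be removed, so the uniform bound you plan to verify does not exist.

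Once the correct two-term bound is in hand, the passage to the power of $|v-u|$ also requires an idea your sketch lacks, because crude bounds on the axis term over $[u,v]^2$ (such as $(s\wedge t)^{q(H-1)}\le u^{q(H-1)}$) degenerate as $u\downarrow 0$. The paper resolves this by invoking translation invariance to reduce to $u=0$, where $\int_{[0,v]^2}s^{q(H-1)}|t-s|^{q(H-1)}\,ds\,dt\lesssim v^{2q(H-1)+2}$; note that $2(H-1)+\tfrac{2}{q}=(2H-2)+\tfrac{2}{q}$, so the axis term and the diagonal term contribute the same exponent, which is the one appearing in \eqref{upperbound}. Alternatively, one can stay on $[u,v]^2$ and use the affine substitution $s=u+(v-u)\tilde s$, $t=u+(v-u)\tilde t$ together with $s\wedge t\ge (v-u)(\tilde s\wedge\tilde t)$ and $H-1<0$ to reach the same conclusion, the rescaled integral over $[0,1]^2$ being finite since $q<\frac{1}{2-2H}<\frac{1}{1-H}$. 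Either of these repairs is needed; as written, your proposal stalls at a uniform-in-position bound that is contradicted by the actual behaviour of $\mathcal{W}$ near the axes.
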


As we have mentioned earlier, Theorem \ref{mainTH2} and Corollary \ref{cormainTH2} can be applied to a large class of locally unbounded
(therefore discontinuous)
processes. Concerning the case of the classical pathwise Young integral
we have the following.

\begin{example}\label{Youngre}
Fix $\frac{1}{2} < H < 1$ and $\max\big\{\frac{1}{2-2H}; \frac{1}{2H-1} \big\}<  p  < \infty$. Let $g:[0,T]\times \mathbb{R}^d\rightarrow \mathbb{R}^d$ be a Borel function and $Y_\cdot=g(\cdot, B_\cdot)$ satisfies the following:
\begin{itemize}
  \item $t\mapsto Y_t$ is $\delta$-H\"older continuous a.s. for $\delta + H >1$

  \item $\sup_{0\le s < t\le T}\frac{|Y_{s,t}|}{|t-s|^\delta} \in L^{2p}(\mathbb{P})$.
\end{itemize}
Then, the forward stochastic integral $\int_0^T Y_sd^-B_s$ is a pathwise Young integral (see e.g. Proposition 3 in \cite{russo2007elements}).

In this case the $L^2(\mathbb{P})$-norm of the Young integral is given by the right-hand side of (\ref{isoFORMULA}).
Indeed
the classical Young's regularity condition immediately implies
\begin{equation}\label{growthYoung}
|Y_s\otimes Y_s|\lesssim s^\delta t^\delta + t^\delta + s^\delta,
\end{equation}
for some $\delta>0$ such that $\delta +  H>1$. Property (\ref{growthYoung}) implies (\ref{IntC}) for $Y_\cdot = g(\cdot, B_\cdot)$ and therefore
$Y$ fulfills the assumption of
Theorem \ref{mainTH2}.


\end{example}




At this point, we explain some differences w.r.t. the singular case treated in \cite{OhashiRusso2}. In the singular case $\frac{1}{4} < H < \frac{1}{2}$, the convergence of the Stratonovich version $\Lambda^0(\epsilon,\delta; s,t)$ of (\ref{Lambdaop}) takes place only pointwise for each $(s,t) \in [0,T]^2_\star$ a.s., while Theorem \ref{mainTH1} shows that we do have functional $L^q$ convergence of $\Lambda^-(\epsilon,\delta; \cdot)$ to the process $\Lambda$ if $H > \frac{1}{2}$. Moreover, in contrast to the singular case, the random measure $\Lambda$ in Theorem \ref{mainTH1} is finite over the rectangle $[0,T]^2$. This allows us to get rid of additional regularity conditions on the increment of the integrand processes reminiscent from the Young's pathwise approach. We stress this phenomenon is related to the presence (or lack of) of the covariance measure structure studied in the articles \cite{kruk2007} and \cite{krukrusso} for the regular and the singular case, respectively.

\section{Preliminaries}
In this section we present some basic objects related to (\ref{Lambdaop}). The projection operator $\Lambda^-(\epsilon,\delta; \cdot)$ onto the sigma-algebra generated by $B_s,B_t$ can be completely characterized by Gaussian linear regression which we now describe in detail. Let us denote

$$\mathbb{B}^{(i)}_{s,t}: = \left(
                                                                                        \begin{array}{c}
                                                                                          B^{(i)}_s \\
                                                                                          B^{(i)}_{t} \\
                                                                                        \end{array}
                                                                                      \right)
\quad \mathbf{B}_{s,t} = \left(
                                 \begin{array}{c}
                                   \mathbb{B}^{(1)}_{s,t} \\
                                   \vdots \\
                                   \mathbb{B}^{(d)}_{s,t} \\
                                 \end{array}
                               \right),
$$
for $(s,t) \in [0,T]^2$ and $1\le i\le d$. Observe

$$
\Lambda^-(\epsilon,\delta; s,t)= \frac{1}{ \epsilon\delta}\mathbb{E}\big[ B_{s,s+\epsilon}\otimes B_{t,t+\delta}| \mathbf{B}_{s,t}\big]
$$
for $(s,t) \in [0,T]^2$.


For each $1\le i,j\le d$, we observe $\big( \mathbf{B}^\top_{s,t}, B^{(i)}_{s,s+\epsilon}, B^{(j)}_{t,t+\delta}\big)$ is a $2d+2$-dimensional Gaussian vector. Therefore, the classical linear regression analysis yields the  representation

\begin{equation}\label{pr}
\left(
  \begin{array}{c}
    B^{(i)}_{s,s+\epsilon} \\
    B^{(j)}_{t,t+\delta} \\
  \end{array}
\right)
 = \mathbb{E}\Bigg [ \left(
  \begin{array}{c}
    B^{(i)}_{s,s+\epsilon} \\
    B^{(j)}_{t,t+\delta} \\
  \end{array}
\right) \Big| \mathbf{B}_{s,t} \Bigg] + N_{s,t}(\epsilon,\delta),
\end{equation}
where $N_{s,t}(\epsilon,\delta)$ is a (zero-mean) 2-dimensional Gaussian vector independent from the sigma-algebra generated by $\mathbf{B}_{s,t}$. Moreover, we have the following representation

\begin{equation}\label{2dcexp1}
\mathbb{E}\Bigg [ \left(
  \begin{array}{c}
    B^{(i)}_{s,s+\epsilon} \\
    B^{(j)}_{t,t+\delta} \\
  \end{array}
\right) \Big| \mathbf{B}_{s,t} \Bigg] = \mathcal{N}^{\epsilon,\delta}_{s,t}(i,j) \Sigma^{-1}_{s,t} \mathbf{B}_{s,t},
\end{equation}
where

$$\mathcal{N}^{\epsilon,\delta}_{s,t}(i,j):=\text{Cov}\Big((B^{(i)}_{s,s+\epsilon}, B^{(j)}_{t,t+\delta})^\top; \mathbf{B}_{s,t}\Big), $$

$$\Sigma_{s,t}:=\text{Cov}\Big(\mathbf{B}_{s,t}; \mathbf{B}_{s,t}\Big).
$$
We can represent $\mathcal{N}^{\epsilon,\delta}_{s,t}(i,j)$ and $\Sigma_{s,t}^{-1}$ as follows:
$\Sigma_{s,t}$ is a $2d\times 2d$-square matrix partitioned into a  block diagonal form

$$\Sigma_{s,t} = \left(
                   \begin{array}{cccc}
                     \Sigma_{s,t}(1,1) & 0 & \ldots & 0 \\
                     0 & \Sigma_{s,t}(2,2) & \ldots & 0 \\
                     \vdots & \vdots & \ddots & \vdots \\
                     0 & 0 & \ldots & \Sigma_{s,t}(d,d) \\
                   \end{array}
                 \right),
$$
where

$$\Sigma_{s,t}(i,i):= \text{Cov} \big( \mathbb{B}^{(i)}_{s,t}; \mathbb{B}^{(i)}_{s,t} \big),$$
for $(s,t) \in [0,T]^2$ and $1\le i\le d$. By the very definition, for $(s,t) \in [0,T]^2_\star$, we have
$$\Sigma_{s,t}(i,i) = \left(
                   \begin{array}{cc}
                     v(s) & R(s,t) \\
                     R(s,t) & v(t) \\
                   \end{array}
                 \right),
$$

$$\text{det} (\Sigma_{s,t}(i,i)) = v(s) v(t) - R^2(s,t), $$


$$\Sigma^{-1}_{s,t}(i,i) = \frac{1}{\Theta_{s,t}} \left(
                   \begin{array}{cc}
                     v(t) & -R(s,t) \\
                     -R(s,t) & v(s) \\
                   \end{array}
                 \right).
$$

Here, in order to keep notation simple, we denote

$$\Theta_{s,t}:= \text{det}(\Sigma_{s,t}(i,i)),$$
for $(s,t) \in [0,T]^2_\star $. By construction,

$$\Sigma^{-1}_{s,t} = \left(
                   \begin{array}{cccc}
                     \Sigma^{-1}_{s,t}(1,1) & 0 & \ldots & 0 \\
                     0 & \Sigma^{-1}_{s,t}(2,2) & \ldots & 0 \\
                     \vdots & \vdots & \ddots & \vdots \\
                     0 & 0 & \ldots & \Sigma^{-1}_{s,t}(d,d) \\
                   \end{array}
                 \right).
$$
By the independence between $B^{(i)}$ and $B^{(j)}$ for $i\neq j$, we observe we can represent $\mathcal{N}^{\epsilon,\delta}_{s,t}(i,j)$ as 


$$
\mathcal{N}^{\epsilon,\delta}_{s,t}(i,j)=\left(
  \begin{array}{ccccc}
    \alpha^{i}_1(\epsilon,s,t) & \ldots  & \alpha^{i}_i(\epsilon,s,t) & \ldots  & \alpha^{i}_d(\epsilon,s,t)  \\
    \beta^{j}_1(\delta,s,t) & \ldots & \ldots \beta^{j}_j(\delta,s,t)  & \ldots & \beta^{j}_d(\delta,s,t) \\
  \end{array}
\right).
$$
Here, the first row is represented by $\alpha^{i}_\ell(\epsilon,s,t):=(0,0)$ for every $\ell\neq i$ and
$$\alpha^{i}_i(\epsilon,s,t):= \Big(\text{cov}\big(B^{(1)}_{s, s+\epsilon}; B^{(1)}_s\big), \text{cov}\big(B^{(1)}_{s, s+\epsilon}; B^{(1)}_{t}\big)\Big).$$
The second row is represented by $ \beta^{j}_m(\delta,s,t) := (0,0)$ for every $m\neq j$ and

$$\beta^{j}_j(\delta,s,t):= \Big(\text{cov}\big(B^{(1)}_{t, t+\delta}; B^{(1)}_s\big), \text{cov}\big(B^{(1)}_{t, t+\delta}; B^{(1)}_{t}\big)\Big).$$
From $(\alpha^{i}_i(s,t),\beta^j_j(s,t)$, we can construct the  submatrix of $\mathcal{N}^{\epsilon,\delta}_{s,t}(i,j)$, given by

$$
\mathcal{N}^{\epsilon,\delta}_{s,t}:=\left(
    \begin{array}{cc}
      \mathbf{n}_{11}(\epsilon,s) & \mathbf{n}_{12}(\epsilon,s,t) \\
      \mathbf{n}_{21}(\delta,s,t) &\mathbf{n}_{22}(\delta,s,t) \\
    \end{array}
  \right),
$$
where

$$\mathbf{n}_{11}(\epsilon,s):= R(s,s+\epsilon) - R(s,s),
\quad \mathbf{n}_{12}(\epsilon,s,t):=R(s+\epsilon,t) - R(s,t),
$$
and
$$\mathbf{n}_{21}(\delta,s,t): = R(s,t+\delta) - R(s,t), \quad \mathbf{n}_{22}(\delta,s,t):=R(t+\delta,t) - R(t,t),
$$
for $(\epsilon,\delta) \in (0,1)^2$ and $(s,t) \in [0,T]^2$.

Let us denote

\begin{equation}\label{2dcexp2}
\left(
    \begin{array}{c}
      Z^{1;ij}_{s,t}(\epsilon) \\
      Z^{2;ij}_{s,t}(\delta) \\
    \end{array}
  \right)
:= \mathbb{E}\Bigg [ \left(
  \begin{array}{c}
    B^{(i)}_{s,s+\epsilon} \\
    B^{(j)}_{t,t+\delta} \\
  \end{array}
\right) \Big| \mathbf{B}_{s,t} \Bigg],
\end{equation}
for $(s,t) \in [0,T]^2_\star$. From (\ref{2dcexp1}), the coordinates of the conditional expectation (\ref{2dcexp2}) are given by

\begin{equation}\label{Z1epsilon}
Z^{1;ij}_{s,t}(\epsilon) = \lambda_{11}(\epsilon,s,t)B^{(i)}_s + \lambda_{12}(\epsilon,s,t)B^{(i)}_{t}
 \end{equation}
and

\begin{equation}\label{Z2delta}
Z^{2;ij}_{s,t}(\delta)= \lambda_{21}(\delta,s,t)B^{(j)}_s
+ \lambda_{22}(\delta,s,t)B^{(j)}_{t},
\end{equation}
where we set

$$\lambda_{11}(\epsilon,s,t):=\frac{1}{\Theta_{s,t}}\Big\{\mathbf{n}_{11}(\epsilon,s)v(t) - \mathbf{n}_{12}(\epsilon,s,t)R(s,t)\Big\},$$
$$\lambda_{12}(\epsilon,s,t):=\frac{1}{\Theta_{s,t}} \Big\{\mathbf{n}_{12}(\epsilon,s,t) v(s) - \mathbf{n}_{11}(\epsilon,s)R(s,t)\Big\},$$
$$\lambda_{21}(\delta,s,t):=\frac{1}{\Theta_{s,t}} \Big\{\mathbf{n}_{21}(\delta,s,t) v(t) - \mathbf{n}_{22}(\delta,s,t)R(s,t)\Big\},$$
$$\lambda_{22}(\delta,s,t):=\frac{1}{\Theta_{s,t}} \Big\{\mathbf{n}_{22}(\delta,s,t) v(s)  - \mathbf{n}_{21}(\delta,s,t)R(s,t)\Big\},$$
for $(s,t) \in [0,T]^2_\star$.
We shall represent $\Lambda^-(\epsilon,\delta; \cdot)$ as follows.

\begin{lemma}\label{Sprerepr}
For any $(\epsilon,\delta) \in (0,1)^2$,
\begin{eqnarray*}
\Lambda^{-,{ij}}(\epsilon,\delta; s,t)&=& \frac{1}{\epsilon \delta}\Big[ Z^{1;ij}_{s,t}(\epsilon) Z^{2;ij}_{s,t}(\delta) - \mathbb{E}[
Z^{1;ij}_{s,t}(\epsilon) Z^{2;ij}_{s,t}(\delta)] \Big]\\
&+& \frac{1}{\epsilon \delta}\mathbb{E}\big[B^{(i)}_{s,s+\epsilon} B^{(j)}_{t,t+\delta}\big],
\end{eqnarray*}
for $(s,t) \in [0,T]^2_\star$ and $1\le i,j\le d$.
\end{lemma}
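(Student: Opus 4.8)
The plan is to exploit the Gaussian linear regression decomposition (\ref{pr}) together with the fact that, for a jointly Gaussian family, orthogonality in $L^2(\mathbb{P})$ is equivalent to independence. Writing the two coordinates of the residual vector $N_{s,t}(\epsilon,\delta)$ in (\ref{pr}) as $N^1$ and $N^2$, the decomposition reads $B^{(i)}_{s,s+\epsilon} = Z^{1;ij}_{s,t}(\epsilon) + N^1$ and $B^{(j)}_{t,t+\delta} = Z^{2;ij}_{s,t}(\delta) + N^2$, where by (\ref{2dcexp2}) the projections $Z^{1;ij}_{s,t}(\epsilon)$ and $Z^{2;ij}_{s,t}(\delta)$ are $\sigma(\mathbf{B}_{s,t})$-measurable and $N_{s,t}(\epsilon,\delta)$ is a centered Gaussian vector independent of $\sigma(\mathbf{B}_{s,t})$.

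First I would expand the product $B^{(i)}_{s,s+\epsilon}B^{(j)}_{t,t+\delta} = Z^1 Z^2 + Z^1 N^2 + N^1 Z^2 + N^1 N^2$ (suppressing sub/superscripts) and take the conditional expectation given $\sigma(\mathbf{B}_{s,t})$. Since $Z^1, Z^2$ are $\sigma(\mathbf{B}_{s,t})$-measurable while $N^1, N^2$ are centered and independent of this sigma-algebra, the two cross terms vanish and $\mathbb{E}[N^1 N^2 | \mathbf{B}_{s,t}] = \mathbb{E}[N^1 N^2]$, giving
$$\mathbb{E}\big[ B^{(i)}_{s,s+\epsilon}B^{(j)}_{t,t+\delta} \,\big|\, \mathbf{B}_{s,t}\big] = Z^{1;ij}_{s,t}(\epsilon) Z^{2;ij}_{s,t}(\delta) + \mathbb{E}[N^1 N^2].$$

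The only remaining task is to identify the deterministic constant $\mathbb{E}[N^1 N^2]$. Letting $\mathcal{L}$ be the closed linear span of the coordinates of $\mathbf{B}_{s,t}$, the projection property gives $N^1 \perp \mathcal{L}$ and $N^2 \perp \mathcal{L}$, while $Z^1, Z^2 \in \mathcal{L}$; hence $\mathbb{E}[N^1 Z^2] = \mathbb{E}[Z^1 N^2] = 0$, and a short orthogonality computation yields
$$\mathbb{E}[N^1 N^2] = \mathbb{E}\big[ B^{(i)}_{s,s+\epsilon}B^{(j)}_{t,t+\delta}\big] - \mathbb{E}\big[ Z^{1;ij}_{s,t}(\epsilon) Z^{2;ij}_{s,t}(\delta)\big].$$
Substituting this into the previous display, recalling the definition (\ref{Lambdaop}) of $\Lambda^{-,ij}$, and dividing by $\epsilon\delta$ delivers the claimed identity.

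I do not expect a serious obstacle here: the statement is an exact Gaussian conditioning identity and every ingredient (the regression representation, the independence of the residual, the block-diagonal covariance structure forcing $Z^1$ to depend only on $(B^{(i)}_s, B^{(i)}_t)$ and $Z^2$ only on $(B^{(j)}_s, B^{(j)}_t)$) is already recorded in the Preliminaries. The one point requiring care is the bookkeeping: one must keep the indices $i,j$ fixed throughout and verify that the centering constant is exactly $\mathbb{E}[Z^1 Z^2]$ rather than some conditional variance, which is precisely what the orthogonality of the residuals to the regression span guarantees.
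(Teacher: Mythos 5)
Your proof is correct and is essentially the argument the paper intends: the paper itself omits the details, deferring to Lemma 3.1 of \cite{OhashiRusso2}, and your computation uses exactly the ingredients set up in the Preliminaries for this purpose, namely the regression decomposition (\ref{pr})--(\ref{2dcexp2}) and the fact that the residual $N_{s,t}(\epsilon,\delta)$ is centered, Gaussian and independent of $\sigma(\mathbf{B}_{s,t})$, so the cross terms vanish under conditioning. Your identification of the centering constant, $\mathbb{E}[N^1N^2]=\mathbb{E}\big[B^{(i)}_{s,s+\epsilon}B^{(j)}_{t,t+\delta}\big]-\mathbb{E}\big[Z^{1;ij}_{s,t}(\epsilon)Z^{2;ij}_{s,t}(\delta)\big]$, obtained by taking full expectations and using orthogonality of the residuals to the regression span, is precisely the step that converts the conditional-expectation formula into the stated identity.
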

\begin{proof}
The proof follows the same lines of the proof of Lemma 3.1 in \cite{OhashiRusso2} and hence we omit the details. 
\end{proof}

\begin{remark}\label{pointwiseR}
Since $R$ vanishes on the axis $R(x,0) = R(0,y)=0$ for every $x,y$ and $\frac{\partial^2R}{\partial t\partial s}$ is continuous on $\mathbb{R}^2_+$ outside the diagonal, then

$$R(s,t) = \int_0^t \int_0^s \frac{\partial^2R}{\partial t\partial s}(a,b)dadb.$$
Therefore,

$$\frac{1}{\epsilon\delta} \mathbb{E}[B^{(i)}_{s,s+\epsilon}B^{(j)}_{t,t+\delta}] = \frac{1}{\epsilon \delta} \int_t^{t+\delta} \int_s^{s+\epsilon} \frac{\partial^2R}{\partial t\partial s}(a,b)dadb\delta_{ij}\rightarrow \frac{\partial^2R}{\partial t\partial s}(s,t)\delta_{ij},$$
as $(\epsilon,\delta)\downarrow 0$ for each $(s,t) \in [0,T]^2_\star$ and $1\le i,j\le d$.

\end{remark}

\begin{lemma}\label{Lambdalimit}
For each $(s,t) \in [0,T]^2_\star$, we have

\begin{equation}\label{asZlimit}
\Lambda(s,t):=\lim_{(\epsilon,\delta)\downarrow 0}\Lambda^-(\epsilon,\delta;s,t) = \mathcal{W}(s,t) + \frac{\partial^2R}{\partial t\partial s}(s,t)I_{d\times d},
\end{equation}
almost surely, where


\begin{eqnarray}
\label{Wreplambda}\mathcal{W}^{ij}(s,t)&:=& \lambda_{11}(s,t)\lambda_{21}(s,t)\big(B^{(i)}_sB^{(j)}_s-\text{cov}(B^{(i)}_s; B^{(j)}_s)\big) \\
\nonumber &+& \lambda_{11}(s,t)\lambda_{22}(s,t) \big( B^{(i)}_s B^{(j)}_t - \text{cov}(B^{(i)}_s; B^{(j)}_t) \big)\\
\nonumber&+& \lambda_{12}(s,t) \lambda_{21}(s,t) \big( B^{(i)}_t B^{(j)}_s - \text{cov}(B^{(i)}_t; B^{(j)}_s) \big)\\
\nonumber&+& \lambda_{12}(s,t)\lambda_{22}(s,t)\big(B^{(i)}_t B^{(j)}_t - \text{cov}(B^{(i)}_t; B^{(j)}_t) \big),
\end{eqnarray}
and

$$\lambda_{11}(s,t) = \frac{1}{\Theta_{s,t}} \Big\{\frac{1}{2}v'(s) v(t) - R(s,t)\frac{\partial R}{\partial s} (s,t) \Big\},$$
$$\lambda_{12}(s,t)= \frac{1}{\Theta_{s,t}} \Big\{ \frac{\partial R}{\partial s} (s,t) v(s) - R(s,t)\frac{1}{2} v'(s) \Big\},$$
$$\lambda_{21}(s,t)= \frac{1}{\Theta_{s,t}} \Big\{\frac{\partial R}{\partial t} (s,t) v(t) - R(s,t)\frac{1}{2}v'(t) \Big\},$$
$$\lambda_{22}(s,t)=\frac{1}{\Theta_{s,t}} \Big\{\frac{1}{2}v'(t) v(s) - R(s,t)\frac{\partial R}{\partial t} (s,t) \Big\}$$
for $(s,t) \in [0,T]^2_\star$ and $1\le i,j\le d$.
\end{lemma}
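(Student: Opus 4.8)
The plan is to start from the representation of $\Lambda^{-,ij}(\epsilon,\delta;s,t)$ furnished by Lemma \ref{Sprerepr}, which decomposes it into a centered bilinear term built from the conditional expectations $Z^{1;ij}_{s,t}(\epsilon)$, $Z^{2;ij}_{s,t}(\delta)$ and a purely deterministic term $\frac{1}{\epsilon\delta}\mathbb{E}[B^{(i)}_{s,s+\epsilon}B^{(j)}_{t,t+\delta}]$. The deterministic term is already settled by Remark \ref{pointwiseR}, which gives its convergence to $\frac{\partial^2 R}{\partial t\partial s}(s,t)\delta_{ij}$ as $(\epsilon,\delta)\downarrow 0$ and hence produces the summand $\frac{\partial^2 R}{\partial t\partial s}(s,t)I_{d\times d}$. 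It therefore remains to identify the almost sure limit of the bilinear term with $\mathcal{W}^{ij}(s,t)$.

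To do so, I would substitute the explicit linear-regression formulas \eqref{Z1epsilon}--\eqref{Z2delta} and expand the product $Z^{1;ij}_{s,t}(\epsilon)Z^{2;ij}_{s,t}(\delta)$. Using the factorization $\frac{1}{\epsilon\delta}=\frac{1}{\epsilon}\cdot\frac{1}{\delta}$, each of the four resulting summands has the form $\frac{\lambda_{1a}(\epsilon,s,t)}{\epsilon}\cdot\frac{\lambda_{2b}(\delta,s,t)}{\delta}\big(B^{(i)}_x B^{(j)}_y-\text{cov}(B^{(i)}_x;B^{(j)}_y)\big)$ with $x,y\in\{s,t\}$, where the random factor carries no dependence on $(\epsilon,\delta)$. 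Consequently the simultaneous limit reduces to the four scalar limits of the normalized coefficients $\lambda_{1a}(\epsilon,s,t)/\epsilon$ and $\lambda_{2b}(\delta,s,t)/\delta$, and the \emph{almost sure} convergence asserted in the statement is then automatic: on the full-measure event on which $B_s,B_t$ are finite, the whole expression is a deterministic-coefficient combination of fixed random variables, so the limit of the product is the product of the (deterministic) limits.

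The core of the argument is the evaluation of these normalized coefficients, all of which reduce to difference quotients of $R$. Writing $\frac{\lambda_{11}(\epsilon,s,t)}{\epsilon}=\frac{1}{\Theta_{s,t}}\big\{\frac{\mathbf{n}_{11}(\epsilon,s)}{\epsilon}v(t)-\frac{\mathbf{n}_{12}(\epsilon,s,t)}{\epsilon}R(s,t)\big\}$, and similarly for the others, one sees that the off-diagonal quotients $\mathbf{n}_{12}(\epsilon,s,t)/\epsilon$ and $\mathbf{n}_{21}(\delta,s,t)/\delta$ live where $R$ is smooth (since $s\neq t$) and converge to $\frac{\partial R}{\partial s}(s,t)$ and $\frac{\partial R}{\partial t}(s,t)$. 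The delicate quotients are the diagonal-straddling ones, $\frac{\mathbf{n}_{11}(\epsilon,s)}{\epsilon}=\frac{1}{2}\frac{(s+\epsilon)^{2H}-s^{2H}}{\epsilon}-\frac{1}{2}\epsilon^{2H-1}$ and its analogue $\mathbf{n}_{22}(\delta,s,t)/\delta$. This is exactly where the hypothesis $H>\frac{1}{2}$ enters: the genuinely singular contribution $\epsilon^{2H-1}$ tends to $0$ precisely because $2H-1>0$, so these quotients converge to $\frac{\partial R}{\partial s}(s,s)=\frac{1}{2}v'(s)$ and $\frac{1}{2}v'(t)$ respectively. Feeding the four limits into the normalized $\lambda$'s reproduces verbatim the coefficients $\lambda_{11}(s,t),\lambda_{12}(s,t),\lambda_{21}(s,t),\lambda_{22}(s,t)$ of the statement, and collecting the four summands yields $\mathcal{W}^{ij}(s,t)$.

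Combining the two limits and passing to matrix form gives $\Lambda(s,t)=\mathcal{W}(s,t)+\frac{\partial^2 R}{\partial t\partial s}(s,t)I_{d\times d}$, which completes the proof. The only genuinely non-routine point is the diagonal-straddling limit of $\mathbf{n}_{11}$ and $\mathbf{n}_{22}$ together with the vanishing of the singular power $\epsilon^{2H-1}$; everything else is bookkeeping of difference quotients and the product rule for simultaneous limits of factorized coefficients.
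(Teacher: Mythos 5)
Your proposal is correct and follows essentially the same route as the paper's proof: decompose via Lemma \ref{Sprerepr}, handle the deterministic term by Remark \ref{pointwiseR}, substitute the regression formulas \eqref{Z1epsilon}--\eqref{Z2delta}, and identify the limits $\lambda_{ab}(s,t)=\lim \lambda_{ab}(\cdot,s,t)/\cdot$ using differentiability of $R$ off the diagonal. The paper's proof is just a terse citation of these ingredients; your write-up supplies the details it omits, in particular the correct observation that the diagonal-straddling quotients $\mathbf{n}_{11}(\epsilon,s)/\epsilon$ and $\mathbf{n}_{22}(\delta,s,t)/\delta$ converge to $\tfrac{1}{2}v'(s)$ and $\tfrac{1}{2}v'(t)$ precisely because the singular term $\epsilon^{2H-1}$ vanishes when $H>\tfrac{1}{2}$.
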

\begin{proof}
Just apply Remark \ref{pointwiseR}, Lemma \ref{Sprerepr}, (\ref{Z1epsilon}), (\ref{Z2delta}) and use the pointwise differentiability of the covariance kernel $R$. In this case,

$$\lambda_{11}(s,t) = \lim_{\epsilon\downarrow 0}\frac{1}{\epsilon}\lambda_{11}(\epsilon,s,t),\quad \lambda_{12}(s,t)=\lim_{\epsilon\downarrow 0}\frac{1}{\epsilon}\lambda_{12}(\epsilon,s,t),$$
$$\lambda_{21}(s,t) = \lim_{\delta\downarrow 0}\frac{1}{\delta}\lambda_{21}(\delta,s,t), \quad \lambda_{22}(s,t) = \lim_{\delta\downarrow 0}\frac{1}{\delta}\lambda_{22}(\delta,s,t),$$
for each $(s,t) \in [0,T]^2_\star$.
\end{proof}


In the sequel, let us denote

$$\eta_{11}(s,t):=\lambda_{11}(s,t) + \lambda_{12}(s,t), \eta_{12}(s,t):= \lambda_{12}(s,t)$$
$$\eta_{21}(s,t):=\lambda_{21}(s,t) + \lambda_{22}(s,t), \eta_{22}(s,t):= \lambda_{22}(s,t),$$
for $(s,t) \in [0,T]^2_\star$. By using the fact that $\lambda_{11}(s,t) = \lambda_{22}(t,s)$ and $\lambda_{12}(s,t) = \lambda_{21}(t,s)$ for every $(s,t) \in [0,T]^2_\star$, we can also represent

\begin{eqnarray}
\label{INTREP1}\mathcal{W}^{ij}(s,t) &=& \Big(\eta_{11}(s,t)B^{(i)}_s + \eta_{12}(s,t)B^{(i)}_{s,t} \Big)\Big(\eta_{21}(s,t)B^{(j)}_s + \eta_{22}(s,t) B^{(j)}_{s,t} \Big)\\
\nonumber&-& \mathbb{E}\Big[\Big(\eta_{11}(s,t)B^{(i)}_s + \eta_{12}(s,t)B^{(i)}_{s,t} \Big)\Big(\eta_{21}(s,t)B^{(j)}_s + \eta_{22}(s,t) B^{(j)}_{s,t} \Big)\Big],
\end{eqnarray}
for $0< s < t\le T$ and

\begin{eqnarray}
\label{INTREP2}\mathcal{W}^{ij}(s,t) &=& \Big(\eta_{11}(t,s)B^{(j)}_t + \eta_{12}(t,s)B^{(j)}_{t,s} \Big)\Big(\eta_{21}(t,s)B^{(i)}_t + \eta_{22}(t,s) B^{(i)}_{t,s} \Big)\\
\nonumber&-& \mathbb{E}\Big[\Big(\eta_{11}(t,s)B^{(j)}_t + \eta_{12}(t,s)B^{(j)}_{t,s} \Big)\Big(\eta_{21}(t,s)B^{(i)}_t + \eta_{22}(t,s) B^{(i)}_{t,s} \Big)\Big],
\end{eqnarray}
for $0< t < s\le T$. The appearance of the increments $B_{s,t}$ and $B_{t,s}$ and the more regular functions $\eta_{11}$ and $\eta_{21}$ in representations (\ref{INTREP1}) and (\ref{INTREP2}) will be more convenient in the proof of Theorem \ref{mainTH1}.

\subsection{The projection operator over the 2-simplex}
The above discussion motivates us to view  $\Lambda^-(\epsilon,\delta; \cdot)$ in terms of increments $B_{s,t}$ rather than $B_s, B_t$ as follows. In case $0< s <t$, we will express \eqref{Lambdaop}
as

\begin{equation}\label{side1}
\Lambda^-(\epsilon,\delta; s,t)= \frac{1}{\epsilon\delta}\mathbb{E}\big[ B_{s,s+\epsilon}\otimes B_{t,t+\delta}| B_s,B_{s,t}\big].
\end{equation}
In case $0< t < s$, we will write

\begin{equation}\label{side2}
\Lambda^-(\epsilon,\delta; s,t)= \frac{1}{\epsilon\delta}\mathbb{E}\big[ B_{s,s+\epsilon}\otimes B_{t,t+\delta}| B_t,B_{t,s}\big].
\end{equation}
For this reason, we collect some important objects of the projection operator over $\Delta_T=\{(s,t) \in [0,T]^2_\star; 0< s < t\le T\}$ which will play a key role in our analysis. In this section, we restrict the discussion to the 2-simplex $\Delta_T$. The representation of (\ref{side2}) over the other $2$-simplex of $[0,T]^2_\star $ is totally analogous. Let us consider

$$
\mathcal{O}^{\epsilon,\delta}_{s,t}:=\left(
    \begin{array}{cc}
      \mathbf{o}_{11}(\epsilon,s) & \mathbf{o}_{12}(\epsilon,s,t) \\
      \mathbf{o}_{21}(\delta,s,t) &\mathbf{o}_{22}(\delta,s,t) \\
    \end{array}
  \right),
$$
where the elements $\mathbf{o}_{11}, \mathbf{o}_{12}, \mathbf{o}_{21}, \mathbf{o}_{22}$ are given by

$$\mathbf{o}_{11}(\epsilon,s):=\text{cov}\big(B^{(1)}_{s, s+\epsilon}, B^{(1)}_s \big)= R(s,s+\epsilon) - R(s,s),$$

\begin{eqnarray*}
\mathbf{o}_{12}(\epsilon,s,t)&:=&\text{cov}\big(B^{(1)}_{s, s+\epsilon}; B^{(1)}_{s,t}\big)=R(s+\epsilon,t) - R(s,t)\\
& - & [R(s+\epsilon,s) - R(s,s)],
\end{eqnarray*}

$$\mathbf{o}_{21}(\delta,s,t):=\text{cov}\big(B^{(1)}_{t, t+\delta}; B^{(1)}_s\big) = R(s,t+\delta) - R(s,t) $$
and

\begin{eqnarray*}
\mathbf{o}_{22}(\delta,s,t)&:=&\text{cov}\big(B^{(1)}_{t, t+\delta}; B^{(1)}_{s,t}\big)=R(t+\delta,t) - R(t,t)\\
 &-& [R(s,t+\delta) - R(s,t)],
 \end{eqnarray*}
for $(\epsilon,\delta) \in (0,1)^2$ and $(s,t) \in \Delta_T$. Gaussian linear regression yields the following representation

\begin{equation}\label{Z1epsINCR}
Z^{1;ij}_{s,t}(\epsilon) = \eta_{11}(\epsilon,s,t)B^{(i)}_s + \eta_{12}(\epsilon,s,t)B^{(i)}_{s,t}
 \end{equation}
and

\begin{equation}\label{Z2deINCR}
Z^{2;ij}_{s,t}(\delta)= \eta_{21}(\delta,s,t)B^{(j)}_s
+ \eta_{22}(\delta,s,t)B^{(j)}_{s,t},
\end{equation}
where we set

$$\eta_{11}(\epsilon,s,t):=\frac{1}{\Theta_{s,t}} \Big\{\mathbf{o}_{11}(\epsilon,s)|t-s|^{2H} - \mathbf{o}_{12}(\epsilon,s,t)\varphi(s,t)\Big\},$$
$$\eta_{12}(\epsilon,s,t):=\frac{1}{\Theta_{s,t}} \Big\{\mathbf{o}_{12}(\epsilon,s,t)s^{2H} - \mathbf{o}_{11}(\epsilon,s)\varphi(s,t)\Big\},$$
$$\eta_{21}(\delta,s,t):=\frac{1}{\Theta_{s,t}} \Big\{\mathbf{o}_{21}(\delta,s,t)|t-s|^{2H} - \mathbf{o}_{22}(\delta,s,t)\varphi(s,t)\Big\},$$
$$\eta_{22}(\delta,s,t):=\frac{1}{\Theta_{s,t}} \Big\{\mathbf{o}_{22}(\delta,s,t) s^{2H} - \mathbf{o}_{21}(\delta,s,t)\varphi(s,t)\Big\},$$
for $(s,t) \in \Delta_T$.
By identification with \eqref{Z1epsilon}  and \eqref{Z2delta}
one easily gets  $\eta_{11}(\epsilon,s,t) = \lambda_{11}(\epsilon,s,t) + \lambda_{12}(\epsilon,s,t)$, $\eta_{21}(\delta,s,t) = \lambda_{21}(\delta,s,t) + \lambda_{22}(\delta,s,t)$, $\eta_{12}(\epsilon,s,t) = \lambda_{12}(\epsilon,s,t)$ and $\eta_{22}(\delta,s,t) = \lambda_{22}(\delta,s,t)$, for $(s,t) \in \Delta_T$ and $(\epsilon,\delta) \in (0,1)^2$.

In the sequel, it will be convenient to introduce the following functions:

$$\vartheta_1(x):= |1+x|^{2H}-1 - |x|^{2H},$$

$$\vartheta_2(x):= |1+x|^{2H}-1;~x\in \mathbb{R}.$$

One can easily check we can write

$$\mathcal{O}^{\epsilon,\delta}_{s,t} = \left(
                                      \begin{array}{cc}
                                        \frac{s^{2H}}{2} \vartheta_1\big( \frac{\epsilon}{s}\big) & -\frac{|t-s|^{2H}}{2} \vartheta_1\big( \frac{-\epsilon}{t-s}\big) \\
                                         \frac{t^{2H}}{2} \vartheta_2\big( \frac{\delta}{t}\big)  -\frac{|t-s|^{2H}}{2} \vartheta_2\big( \frac{\delta}{t-s}\big)& \frac{|t-s|^{2H}}{2} \vartheta_1\big( \frac{\delta}{t-s}\big) \\
                                      \end{array}
                                    \right),
$$
for $(s,t)\in \Delta_T$. Then, 

\begin{equation}\label{eta11}
\Theta_{s,t}\eta_{11}(\epsilon,s,t) = \frac{s^{2H}}{2}\vartheta_1\Big(\frac{\epsilon}{s}\Big)|t-s|^{2H} + \frac{|t-s|^{2H}}{2}\varphi(s,t) \vartheta_1 \Big( -\frac{\epsilon}{t-s}\Big),
\end{equation}

\begin{equation}\label{eta12}
\Theta_{s,t}\eta_{12}(\epsilon,s,t) = -\frac{s^{2H}}{2}\vartheta_1 \Big( \frac{\epsilon}{s}\Big)\varphi(s,t) - \frac{|t-s|^{2H}}{2} s^{2H} \vartheta_1 \Big( -\frac{\epsilon}{t-s} \Big),
\end{equation}

\begin{eqnarray}
\nonumber\Theta_{s,t}\eta_{21}(\delta,s,t) &=& \Big\{\frac{t^{2H}}{2} \vartheta_2 \Big( \frac{\delta}{t} \Big) - \frac{|t-s|^{2H}}{2}\vartheta_2 \Big( \frac{\delta}{t-s}\Big)\Big\}|t-s|^{2H} \\
\label{eta21}&-& \varphi(s,t) \frac{|t-s|^{2H}}{2} \vartheta_1 \Big(\frac{\delta}{t-s}\Big),
\end{eqnarray}

\begin{eqnarray}
\nonumber\Theta_{s,t}\eta_{22}(\delta,s,t) &=& -\Big\{\frac{t^{2H}}{2} \vartheta_2 \Big( \frac{\delta}{t} \Big) - \frac{|t-s|^{2H}}{2}\vartheta_2 \Big( \frac{\delta}{t-s}\Big)\Big\}\varphi(s,t) \\
\label{eta22}&+& s^{2H}\frac{|t-s|^{2H}}{2} \vartheta_1 \Big(\frac{\delta}{t-s}\Big),
\end{eqnarray}
for $(s,t)\in \Delta_T$. Recall that $\frac{1}{2} < H < 1$ implies 

$$\varphi(s,t)>0;\quad (s,t) \in \Delta_T.$$
\begin{remark}\label{phi1rem}
Since,

$$\vartheta_2(x)= \vartheta_1(x) + |x|^{2H},$$
we can write $\mathcal{O}^{\epsilon,\delta}_{s,t}$ only in terms of $\vartheta_1$. Indeed, we notice that

$$\frac{t^{2H}}{2} \vartheta_2\Big( \frac{\delta}{t}\Big)  -\frac{|t-s|^{2H}}{2} \vartheta_2\Big( \frac{\delta}{t-s}\Big)$$
$$ = \Bigg[\frac{t^{2H}}{2} \vartheta_1\Big( \frac{\delta}{t}\Big)  -\frac{|t-s|^{2H}}{2} \vartheta_1\Big( \frac{\delta}{t-s}\Big)\Bigg],$$
for $(s,t) \in \Delta_T$. 
\end{remark}
Next, we recall the following important Lemma 3.4 in \cite{OhashiRusso2}.  
\begin{lemma}\label{fhlemma}
Fix $H  \in (0,1)$. The following representation holds:

\begin{equation}\label{detrep}
\Theta_{s,t} = |t-s|^{2H}A(s,t),
\end{equation}
where
$$
A(s,t):=\frac{1}{4}\Big\{ 2s^{2H}+ 2t^{2H} - |t-s|^{2H} - \frac{(t^{2H}-s^{2H})^2}{|t-s|^{2H}} \Big\},
$$
for $(s,t) \in [0,T]^2_\star$. Moreover,

$$\frac{s^{2H} \wedge t^{2H}}{|A(s,t)|} \le \frac{2^{2-2H}}{4-2^{2H}};~(s,t) \in [0,T]^2_\star.$$
Therefore,

\begin{equation}\label{deterest}
\frac{1}{\Theta^2_{s,t}}\lesssim_H \frac{1}{|t-s|^{4H}(s^{4H}\wedge t^{4H})},
\end{equation}
for every $(s,t) \in [0,T]^2_\star$.
\end{lemma}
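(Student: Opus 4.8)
The plan is to prove \eqref{detrep} by a direct computation and then to reduce the estimate on $A$ to a scalar inequality.

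First I would establish the factorization. Recall from the block structure of $\Sigma_{s,t}$ that $\Theta_{s,t}=\det\Sigma_{s,t}(i,i)=v(s)v(t)-R(s,t)^2$, where $v(s)=R(s,s)=s^{2H}$ and $R(s,t)=\frac{1}{2}\{s^{2H}+t^{2H}-|t-s|^{2H}\}$. Setting $a:=s^{2H}$, $b:=t^{2H}$ and $c:=|t-s|^{2H}$, so that $R(s,t)=\frac{1}{2}(a+b-c)$, a short expansion gives
\[
4\Theta_{s,t}=4ab-(a+b-c)^2=2(ab+bc+ca)-(a^2+b^2+c^2)=c(2a+2b-c)-(a-b)^2=4c\,A(s,t),
\]
which is precisely \eqref{detrep}. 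Note that the middle expression is symmetric in $(a,b,c)$; in particular $A(s,t)=A(t,s)$.

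Next I would record two structural facts that set up the reduction. Since $\Sigma_{s,t}(i,i)$ is the covariance matrix of the nondegenerate Gaussian vector $(B^{(1)}_s,B^{(1)}_t)$ for $(s,t)\in[0,T]^2_\star$, we have $\Theta_{s,t}>0$; as $c=|t-s|^{2H}>0$, \eqref{detrep} forces $A(s,t)>0$, so $|A(s,t)|=A(s,t)$. Moreover both $A$ and $(s,t)\mapsto s^{2H}\wedge t^{2H}$ are homogeneous of degree $2H$ and symmetric in $(s,t)$, so the quotient $\frac{s^{2H}\wedge t^{2H}}{A(s,t)}$ is scale invariant and symmetric. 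I may therefore assume $0<s\le t$ and normalize $t=1$, writing $r:=s\in(0,1)$. With $\kappa:=\frac{4-2^{2H}}{2^{2-2H}}=2^{2H}-2^{4H-2}$, the claimed bound becomes equivalent to the scalar inequality $A(r,1)\ge\kappa\,r^{2H}$ for all $r\in(0,1)$.

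The heart of the argument, and the step I expect to be the main obstacle, is this single-variable inequality. The natural route is to study $\psi(r):=A(r,1)/r^{2H}$ and to show that it attains its global minimum over $(0,1)$ at $r=\frac{1}{2}$ with value $\psi(\frac{1}{2})=\kappa$; equivalently, the extremal configuration is $t=2s$, i.e. the case $|t-s|=s\wedge t$ ($c=a$ in the notation above). One checks directly from \eqref{detrep} and homogeneity that $\psi(\frac{1}{2})=\kappa$, and a Taylor expansion shows $\psi(r)\to 1\ge\kappa$ as $r\downarrow 0$ and as $r\uparrow 1$ (recall $\kappa\le 1$, with equality only at $H=\frac{1}{2}$). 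The delicate point is to verify that $r=\frac{1}{2}$ is the unique interior critical point of $\psi$ and a minimum; I expect this to require a careful sign analysis of $\psi'$ (or a convexity argument for a suitable auxiliary function) that is uniform in $H\in(0,1)$, since the inequality is tight precisely at $r=\frac{1}{2}$ and degenerates at $H=\frac{1}{2}$, where $\psi\equiv 1$.

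Finally, \eqref{deterest} follows immediately: from $A(s,t)\ge\kappa\,(s^{2H}\wedge t^{2H})$ and \eqref{detrep} one gets $\Theta_{s,t}\ge\kappa\,|t-s|^{2H}(s^{2H}\wedge t^{2H})$, whence, using $(s^{2H}\wedge t^{2H})^2=s^{4H}\wedge t^{4H}$,
\[
\frac{1}{\Theta_{s,t}^2}\le \frac{\kappa^{-2}}{|t-s|^{4H}(s^{4H}\wedge t^{4H})}\lesssim_H \frac{1}{|t-s|^{4H}(s^{4H}\wedge t^{4H})},
\]
for every $(s,t)\in[0,T]^2_\star$.
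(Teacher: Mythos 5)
Your algebraic verification of \eqref{detrep} is correct (with $a=s^{2H}$, $b=t^{2H}$, $c=|t-s|^{2H}$ one indeed has $4\Theta_{s,t}=4ab-(a+b-c)^2=c(2a+2b-c)-(a-b)^2=4cA(s,t)$), and so are the symmetry/homogeneity reduction, the computation $\psi(\tfrac12)=\kappa$ with $\kappa=2^{2H}-2^{4H-2}$, and the final passage from the bound on $A$ to \eqref{deterest}. However, the second assertion of the lemma \emph{is} the inequality $\psi(r)\ge\kappa$ on $(0,1)$, and this you do not prove: you verify the value at the conjectured minimizer $r=\tfrac12$ and the boundary limits $\psi\to1$, and then state that you ``expect'' a sign analysis of $\psi'$ to show that $r=\tfrac12$ is the unique interior critical point and a global minimum. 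Checking the candidate extremal value together with the boundary behaviour does not establish a global bound; the deferred monotonicity/uniqueness analysis is the entire content of the estimate (and it is genuinely delicate, since $\psi\equiv\kappa\equiv1$ when $H=\tfrac12$, so any argument must degenerate there). This is a genuine gap, at precisely the step you flagged as the main obstacle. For context: the paper itself contains no proof of this lemma either; it is quoted verbatim from Lemma 3.4 of \cite{OhashiRusso2}, so there is no internal argument to compare against.

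If you want to close the gap, a route cleaner than analyzing $\psi'$ is to use the conditional-variance form of the determinant: for $0<s<t$,
$$\Theta_{s,t}=s^{2H}\operatorname{Var}\big(B^{(1)}_t\,\big|\,B^{(1)}_s\big)=s^{2H}|t-s|^{2H}-\varphi^2(s,t),\qquad \varphi(s,t)=\tfrac12\big[t^{2H}-s^{2H}-|t-s|^{2H}\big],$$
so that $A(s,t)=s^{2H}-\varphi^2(s,t)|t-s|^{-2H}$, and the claimed bound $A(s,t)\ge\kappa\,s^{2H}$ is equivalent, using $1-\kappa=(1-2^{2H-1})^2$ and writing $x=s$, $y=t-s$, to
$$\big|(x+y)^{2H}-x^{2H}-y^{2H}\big|\le \big|2^{2H}-2\big|\,x^Hy^H,\qquad x,y>0.$$
This says that the correlation of the adjacent increments $B_s$ and $B_t-B_s$ is at most $|2^{2H-1}-1|$ in absolute value, with equality exactly for increments of equal length ($y=x$, i.e.\ your extremal configuration $t=2s$); it is a classical fact about fractional Brownian motion which you may cite, or prove by reducing (via homogeneity, e.g.\ normalizing $xy=1$) to a one-variable inequality. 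Once this estimate is in hand, the rest of your argument goes through verbatim.
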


\begin{lemma}\label{growth}
If $\frac{1}{2} < H < 1$, then
$$|\vartheta_1(x)|\le 4H |x|,$$
for every $x \in \mathbb{R}$.


\end{lemma}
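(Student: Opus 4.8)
The plan is to reduce the estimate to a uniform bound on a single increment of a power function via the fundamental theorem of calculus. Since $\frac12<H<1$ gives $2H>1$, the map $x\mapsto |x|^{2H}$ is continuously differentiable on all of $\R$ with derivative $2H\,|x|^{2H-1}\sgn(x)$ (which extends continuously by $0$ at the origin). Hence $\vartheta_1\in C^1(\R)$, $\vartheta_1(0)=|1|^{2H}-1-0=0$, and writing $\phi(s):=|s|^{2H-1}\sgn(s)$ we have $\vartheta_1'(x)=2H\bigl(\phi(1+x)-\phi(x)\bigr)$. Consequently $\vartheta_1(x)=2H\int_0^x\bigl(\phi(1+u)-\phi(u)\bigr)\,du$, so it suffices to bound the integrand $g(u):=\phi(1+u)-\phi(u)$ uniformly in $u$.

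First I would record that $\phi$ is continuous and strictly increasing on $\R$ (it equals $s^{2H-1}$ for $s>0$ and $-|s|^{2H-1}$ for $s<0$, both increasing through $\phi(0)=0$), whence $g(u)>0$ for every $u$; in particular $\vartheta_1$ is increasing and vanishes only at $0$. The crux is then the upper bound $\sup_u g(u)\le 2^{2-2H}$, which I would obtain by differentiating: for $u\notin\{0,-1\}$ one has $g'(u)=(2H-1)\bigl(|1+u|^{2H-2}-|u|^{2H-2}\bigr)$, which vanishes exactly when $|1+u|=|u|$, i.e. at $u=-\tfrac12$. Because $2H-2<0$, the function $v\mapsto|v|^{2H-2}$ is decreasing in $|v|$, so $g'>0$ for $u<-\tfrac12$ and $g'<0$ for $u>-\tfrac12$; together with $g(u)\to0$ as $u\to\pm\infty$ this identifies $u=-\tfrac12$ as the global maximum, with value $g(-\tfrac12)=2\,(\tfrac12)^{2H-1}=2^{2-2H}$.

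Finally, since $H\ge\tfrac12$ yields $2-2H\le1$ and therefore $2^{2-2H}\le 2$, integrating the bound $0<g(u)\le 2^{2-2H}$ over the interval between $0$ and $x$ gives $|\vartheta_1(x)|=2H\bigl|\int_0^x g(u)\,du\bigr|\le 2H\,2^{2-2H}\,|x|\le 4H|x|$, as claimed (in fact the sharper constant $2^{3-2H}H$ is available). The only delicate point is that $\phi'$ blows up at the origin, so one cannot bound $g$ by a naive Lipschitz estimate on $\phi$; this is precisely why I pass through the exact maximization of $g$, and the factor-$2$ slack in the stated constant is exactly what makes the crude replacement $2^{2-2H}\le 2$ harmless.
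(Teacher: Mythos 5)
Your proof is correct, and it takes a genuinely different route from the paper. The paper argues by a three-case analysis: on $[-1,1]$ it bounds the derivative $\vartheta_1'$ directly (getting the constant $\max\{4H,H2^{2H}\}$), while for $x>1$ and $x<-1$ it uses separate algebraic normalizations of $\vartheta_1(x)/(2Hx)$ combined with the mean value theorem, producing the constants $H2^{2H}$ and $2H+1$ respectively, and then checks at the end that all three constants are below $4H$ when $H>\tfrac12$. You instead give a single unified argument: since $2H>1$, $\vartheta_1\in C^1(\R)$ with $\vartheta_1'(x)=2H\bigl(\phi(1+x)-\phi(x)\bigr)=2Hg(x)$, and the fundamental theorem of calculus reduces everything to the global bound on $g$; the exact maximization of $g$ (increasing on $(-\infty,-\tfrac12]$, decreasing on $[-\tfrac12,\infty)$ by the sign of $g'$, hence a global maximum $g(-\tfrac12)=2^{2-2H}\le 2$) is the one nontrivial step, and your handling of it is sound — the sign analysis of $g'$ away from the non-differentiability points $u\in\{0,-1\}$ plus continuity of $g$ is all that is needed (the limits at $\pm\infty$ you invoke are not even necessary). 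Your approach buys three things: a sharper constant $H2^{3-2H}<4H$; the strict monotonicity of $\vartheta_1$ as a by-product, which the paper asserts separately and uses later (in the proof of Lemma \ref{ddeltaLEMMA}); and a structural explanation of where the extremal behavior sits (at $x=-\tfrac12$, inside the middle regime), which the paper's case-splitting at $\pm1$ obscures. The paper's version, by contrast, is more pedestrian but avoids any maximization, relying only on crude one-sided MVT estimates in each regime. Both proofs use the hypothesis $H>\tfrac12$ in the same essential way — your step $2^{2-2H}\le2$ and, more importantly, the continuity of $\vartheta_1'$ at $0$ (equivalently $2H-1>0$), which is what makes the FTC representation legitimate.
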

\begin{proof}
At first, we consider the case $-1 \le x \le 1$. We observe $\vartheta_1$ is smooth except on the origin. The first derivative is

$$\vartheta'_1(x) = 2H \big[ (1+x)^{2H-1}-x^{2H-1}\big]; 0 < x\le 1$$
and

$$\vartheta'_1(x) = 2H \big[ (1+x)^{2H-1}+ (-x)^{2H-1}\big]; -1 \le x< 0.$$
Then, $|\vartheta'_1(x)|\le \max\{4H, H2^{2H}\}$ for every $-1\le x \le 1$ with $x\neq 0$.

We observe $\vartheta_1(x)>0$ for $x>1$ and $\vartheta_1(x) < 0$ for $x < -1$. Next, we consider the case $x>1$. Applying mean value theorem along the interval $(1, 1 + x^{-1})$, we have

$$\Big[ \Big(\frac{1+x}{x}\Big)^{2H}-1\Big] = \Big[ \Big(1+\frac{1}{x}\Big)^{2H}-1^{2H}\Big]\le 2H \Big(1+\frac{1}{x}\Big)^{2H-1}\frac{1}{x},$$
so that

\begin{eqnarray*}
\frac{\vartheta_1(x)}{2Hx} &=&\frac{1}{2H}\Big[ \Big(\frac{1+x}{x}\Big)^{2H} x^{2H-1}-\frac{1}{x} -x^{2H-1}  \Big]\\
&\le& \frac{1}{2H}x^{2H-1} \Big[ \Big(\frac{1+x}{x}\Big)^{2H}-1\Big]\\
&\le& (x+1)^{2H-1}\frac{1}{x} = \Big(\frac{x+1}{x}\Big)^{2H-1}x^{2H-2}.
\end{eqnarray*}
Therefore,

$$\Big|\frac{\vartheta_1(x)}{2Hx}\Big|\le 2^{2H-1},$$
for every $x>1.$ The analysis for $x < -1$ is similar. For sake of completeness, we give the details. We have $|\vartheta_1(x)| = (-x)^{2H}+1 - (-1-x)^{2H}$ for $x < -1$. Therefore,

$$\Big| \frac{\vartheta_1(x)}{2Hx}\Big|=\frac{1}{2H} \Big[ (-x)^{2H-1}+ \frac{1}{(-x)} - \frac{(-1-x)^{2H}}{(-x)}  \Big]; x <-1.$$
Consider the function
$$h(y)= \frac{1}{2H} \Big[ y^{2H-1}+ \frac{1}{y} - \frac{(y-1)^{2H}}{y}  \Big]; y >1.$$
Applying mean value theorem along the interval $(1-y^{-1}, 1)$, we have

$$\Big[ 1^{2H}-\Big(1-y^{-1}\Big)^{2H}\Big] \le 2H y^{-1},$$
so that

\begin{eqnarray*}
2H h(y)&=&  \Big[ 1^{2H}-\Big(1-y^{-1}\Big)^{2H}\Big]y^{2H-1} +\frac{1}{y}\\
&\le& 2Hy^{2H-2} + \frac{1}{y}\\
&\le& 2H+1,
\end{eqnarray*}
for every $y>1$. Therefore,

$$\Big| \frac{\vartheta_1(x)}{2Hx}\Big|\le 1 + \frac{1}{2H},$$
for every $x <-1$. Since $2H+1 < 4H$ and $H2^{2H} < 4H$ for every $\frac{1}{2} < H < 1$, we conclude the proof.

\end{proof}
\section{Proof of Theorems \ref{mainTH1} and \ref{mainTH2}}
This section presents the proofs of Theorems \ref{mainTH1} and \ref{mainTH2}. We split the matrix $\Lambda^-(\epsilon,\delta;\cdot)$ according to Lemma \ref{Sprerepr} as
\begin{eqnarray}
\label{ags}\Lambda^{-,ij}(\epsilon,\delta; s,t)&=& \frac{1}{\epsilon \delta}\Big[ Z^{1;ij}_{s,t}(\epsilon) Z^{2;ij}_{s,t}(\delta) - \mathbb{E}[
Z^{1;ij}_{s,t}(\epsilon) Z^{2;ij}_{s,t}(\delta)] \Big]\\
\nonumber&+& \frac{1}{\epsilon \delta}\mathbb{E}\big[B^{(i)}_{s,s+\epsilon} B^{(j)}_{t,t+\delta}\big],
\end{eqnarray}
for $(s,t) \in [0,T]^2_\star$ and $1\le i,j\le d$. In view of (\ref{INTREP1}) and (\ref{INTREP2}) (that is symmetry), it is sufficient to prove

$$\lim_{(\epsilon,\delta)\downarrow 0}\Lambda^-(\epsilon,\delta; \cdot) = \mathcal{W} + \frac{ \partial^2 R}{\partial t \partial s}I_{d\times d}$$
in $L^q(\Delta_T\times \Omega; \mathbb{R}^{d\times d})$ for $1 \le q < \min\Big\{ \frac{1}{2H-1}; \frac{1}{2-2H}  \Big\}$. The analysis over the other simplex of $[0,T]^2_*$ is entirely analogous. We will divide the proof into two separate parts:
$$\frac{1}{\epsilon \delta}\mathbb{E}\big[B^{(i)}_{s,s+\epsilon} B^{(j)}_{t,t+\delta}\big]\quad \text{(deterministic component)}$$ 
and 
$$\frac{1}{\epsilon \delta}\Big[ Z^{1;ij}_{s,t}(\epsilon) Z^{2;ij}_{s,t}(\delta) - \mathbb{E}[
Z^{1;ij}_{s,t}(\epsilon) Z^{2;ij}_{s,t}(\delta)] \Big]\quad \text{(random component)}$$
in (\ref{ags}).  
\subsection{The analysis of the deterministic component}
If $(s,t) \in \Delta_T$, then we can express

\begin{eqnarray}
\nonumber\frac{1}{\epsilon\delta}\mathbb{E}\big[ B^{(i)}_{s,s+\epsilon}B^{(j)}_{t,t+\delta} \big] &=& \frac{\delta_{ij}}{2\epsilon\delta}\Big\{ |t-s-\epsilon|^{2H}+ |t-s+\delta|^{2H} - |t-s + \delta-\epsilon|^{2H}  - |t-s|^{2H}\Big\}\\
 &=& \delta_{ij}\frac{|t-s|^{2H}}{2\epsilon\delta}\Phi\Big(\frac{\delta}{t-s}, \frac{\epsilon}{t-s}\Big)\label{Detrep},
 \end{eqnarray}
where 
$$\Phi(x,y) = |1+x|^{2H}  - \big|1+(x-y)\big|^{2H}-1+ |1-y|^{2H}; \quad (x,y) \in \mathbb{R}^2. $$

\begin{lemma}\label{grdet}
$$\sup_{ (x,y) \in Q }\Big|\frac{1}{xy}\Phi(x,y)\Big| < \infty,$$
where $Q=\{(x,y)\in \mathbb{R}_+^2; 0 < x\vee y < 1~\text{or}~0 < x\wedge y < 1 \le x\vee y\}$.
\end{lemma}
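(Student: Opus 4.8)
The plan is to recognize $\Phi$ as a mixed second-order finite difference of the single function $\psi(r):=|r|^{2H}$ and to exploit its convexity together with the local integrability of $\psi''$. Since $\psi$ is $C^1$ with $\psi'(r)=2H\,\sgn(r)|r|^{2H-1}$ and $\psi''(w)=2H(2H-1)|w|^{2H-2}$ lies in $L^1_{\mathrm{loc}}(\mathbb{R})$ precisely because $H>\tfrac12$ forces $2H-2>-1$, one rewrites
\[
\Phi(x,y)=\psi(1+x)-\psi(1+x-y)-\psi(1)+\psi(1-y)=\int_1^{1+x}\big[\psi'(u)-\psi'(u-y)\big]\,du=\int_1^{1+x}\int_{u-y}^{u}\psi''(w)\,dw\,du .
\]
Two structural facts follow. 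First, $\psi$ is convex (as $2H\ge 1$), so $\psi'$ is nondecreasing and the integrand $D(u):=\psi'(u)-\psi'(u-y)$ is strictly positive; hence $\Phi\ge 0$ and it suffices to establish an upper bound $\Phi(x,y)\le C_H\,xy$ on $Q$, which then yields $\sup_Q|\Phi/(xy)|\le C_H<\infty$. Second, on $Q$ both coordinates are strictly positive and not both $\ge 1$, so $y\ge 1$ forces $x<1$, a fact I will use repeatedly. (The identity $\Phi(x,y)=\vartheta_2(x)-\vartheta_2(x-y)+\vartheta_2(-y)$ connects this to the earlier notation and to Lemma \ref{growth}, but the difference representation above is more convenient.)

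Next I would split $Q$ according to the position of the integration variable $w$ relative to the singularity of $\psi''$ at $w=0$. When $y\le\tfrac12$, every $u\in[1,1+x]$ satisfies $u-y\ge\tfrac12$, so the whole segment $[u-y,u]$ stays away from the origin; the mean value theorem gives $D(u)=\psi''(\xi)\,y\le 2H(2H-1)2^{2-2H}y$ uniformly in $u$, and integrating over an interval of length $x$ yields $\Phi\le 2H(2H-1)2^{2-2H}\,xy$ directly, with no constraint on $x$.

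The remaining regime $y>\tfrac12$ I would treat in two sub-cases. If moreover $x>1$, then necessarily $y<1$ by the structure of $Q$, so $u-y\ge 1-y>0$ throughout and the estimate $\psi''(\xi)\le 2H(2H-1)(u-y)^{2H-2}$ is legitimate; integrating $(u-y)^{2H-2}$ over $[1,1+x]$ produces $(1+x-y)^{2H-1}\le(1+x)^{2H-1}$, whence $\Phi\le 2H\,y\,(1+x)^{2H-1}$, and the bound $(1+x)^{2H-1}\le 2^{2H-1}x^{2H-1}$ valid for $x\ge 1$ gives $\Phi/(xy)\lesssim_H x^{2H-2}\le 1$. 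If instead $x\le 1$, then $u$ ranges in the compact interval $[1,2]$ and I would bound the increment $D(u)$ directly: the term $\psi'(u)=2Hu^{2H-1}$ is at most $2H\,2^{2H-1}$, while $|\psi'(u-y)|\le 2H\,y^{2H-1}$ since $|u-y|\le y$ when $u<y$ (the difference being even smaller when $u\ge y$). Thus $D(u)\le 2H\,2^{2H-1}+2H\,y^{2H-1}$, integration over the interval of length $x$ gives $\Phi\le x\,(2H\,2^{2H-1}+2H\,y^{2H-1})$, and dividing by $xy$ while using $y>\tfrac12$ (so $y^{-1}<2$ and $y^{2H-2}\le 2^{2-2H}$) keeps the quotient bounded by a constant depending only on $H$.

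The main obstacle is the tension, in the corner where $x\downarrow 0$ and $y\uparrow 1$ simultaneously, between two competing needs: extracting the factor $x$ from the short interval $[1,1+x]$, and controlling the singular $\psi''(w)\sim|w|^{2H-2}$ as the lower endpoint $u-y$ approaches $0$. A crude global estimate either loses the factor $x$ (bounding the $u$-integral by a constant times $(1+x)^{2H-1}$) or introduces a spurious blow-up $(1-y)^{2H-2}$. The resolution is to never differentiate twice near the origin: one works instead with the first derivative $\psi'$, which is \emph{bounded and continuous across} $w=0$ precisely because $2H-1>0$, so $D(u)$ stays bounded on the compact range of $u$ while still being integrated over an interval of length $x$. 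The case analysis above simply routes each region of $Q$ to whichever representation — the second difference via $\psi''$ away from the origin, or the direct increment of the bounded $\psi'$ near it — is effective there.
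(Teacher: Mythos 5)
Your proof is correct, and it is organized differently from the paper's, so a comparison is worthwhile. Both arguments ultimately come down to bounding the first-derivative increment $D(u)=\psi'(u)-\psi'(u-y)$ for $u\in[1,1+x]$, where $\psi(r)=|r|^{2H}$, but the decompositions differ. The paper splits $Q$ exactly according to its definition: on $\{0<x\vee y<1\}$ it writes $\Phi(x,y)=f(x,y)-f(0,y)$ with $f(x,y)=|1+x|^{2H}-|1+x-y|^{2H}$, applies the mean value theorem in $x$, bounds the resulting derivative difference by its value at the left endpoint using monotonicity, and controls $[1-(1-y)^{2H-1}]/y$ by L'H\^{o}pital; on the mixed regions $\{0<y<1<x\}$ and $\{0<x<1<y\}$ it instead applies the mean value theorem in $y$ (resp.\ in $x$), bounding $\partial_y\Phi$ by a multiple of $x$ (resp.\ $\partial_x\Phi$ by a multiple of $y$). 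You make the iterated-increment structure explicit through the identity $\Phi(x,y)=\int_1^{1+x}\big[\psi'(u)-\psi'(u-y)\big]\,du$, obtain $\Phi\ge 0$ for free from convexity, and split according to the position of the segment $[u-y,u]$ relative to the singularity of $\psi''$ at the origin ($y\le\tfrac{1}{2}$; then $y>\tfrac{1}{2}$ with $x>1$ or with $x\le 1$) rather than according to the definition of $Q$; in particular your first case treats all $x>0$ at once and cuts across the paper's regions. What your route buys is the structural information $\Phi\ge0$, a single unified integral identity, and no L'H\^{o}pital; what the paper's buys is that each region of $Q$ is dispatched by one short one-variable mean value argument. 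Both produce constants depending only on $H$.

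One slip of wording, with no effect on correctness: in your last case ($y>\tfrac{1}{2}$, $x\le 1$) you justify $|\psi'(u-y)|\le 2Hy^{2H-1}$ by saying the quantity is "even smaller when $u\ge y$". As a pointwise bound on $|\psi'(u-y)|$ this is false for $u\ge y$ (take $u=2$, $y=3/5$: then $|u-y|^{2H-1}=(1.4)^{2H-1}>y^{2H-1}$). What is true, and what your argument actually needs, is that for $u\ge y$ one has $\psi'(u-y)\ge 0$, hence $D(u)=\psi'(u)-\psi'(u-y)\le\psi'(u)\le 2H\,2^{2H-1}$; so the final estimate $D(u)\le 2H\big(2^{2H-1}+y^{2H-1}\big)$ holds in both sub-cases and the remainder of your computation goes through verbatim.
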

\begin{proof}
We set $f(x,y) = |1+x|^{2H}  - \big|1+(x-y)\big|^{2H}$. By Taylor's theorem, we observe for each $(x,y) \in \mathbb{R}^2_+$ such that $0 < x\wedge y \le x\vee y < 1$, we have

$$\Phi(x,y)= f(x,y)  - f(0,y) = \frac{\partial f}{\partial x}(\bar{x},y)x$$
for some $\bar{x}$ satisfying $0 < \bar{x} < x$. Here,

$$ \frac{\partial f}{\partial x}(x,y) = 2H \Big[(1+x)^{2H-1} - \big(1+(x-y)\big)^{2H-1} \Big],$$
for $0 < x\wedge y \le x\vee y < 1$. We observe $a \mapsto \Big[(1+a)^{2H-1} - \big(1+(a-y)\big)^{2H-1} \Big]$ is a positive decreasing function on $[0,1]$ for each $0 < y < 1 $. Then,

$$\max_{0\le a \le 1} \Big[(1+a)^{2H-1}- (1+(a-y))^{2H-1}\Big]= \Big[1-(1-y)^{2H-1}\Big],$$
for each $0 < y < 1$. Therefore, we can safely state

$$
\Big|\frac{1}{xy}\Phi(x,y)\Big| \le  2H \Bigg[\frac{1 - \big(1-y\big)^{2H-1}}{y}\Bigg],
$$
for $0 < x\wedge y \le x\vee y < 1$. By L'Hospital rule, we have

$$\lim_{y\downarrow 0}\Bigg[\frac{1 - \big(1-y\big)^{2H-1}}{y}\Bigg] = (2H-1)\lim_{y\downarrow 0}(1-y)^{2H-2} = 2H-1.$$

Now, we check
$$\sup_{0 < x < 1 < y}\Big|\frac{1}{xy}\Phi(x,y)\Big| < \infty.$$
and

$$\sup_{0 < y < 1 < x}\Big|\frac{1}{xy}\Phi(x,y)\Big| < \infty.$$
We observe we can always write

$$\Big|\frac{1}{xy}\Phi(x,y)\Big| = \frac{1}{xy}|\Phi(x,y) - \Phi(0,y)|$$
and

$$\Big|\frac{1}{xy}\Phi(x,y)\Big|= \frac{1}{xy}|\Phi(x,y) - \Phi(x,0)|,$$
for $x,y >0$. We observe there exists a constant $C$ which only depends on $H$ such that

\begin{eqnarray*}
\Big|\frac{\partial\Phi}{\partial y}(x,y)\Big| &=& \Big|2H\Big[ (1+x-y)^{2H-1} - (1-y)^{2H-1}\Big]\Big|\\
&\le& C 2H |x|^{2H-1}\\
&\le& C2H x,
\end{eqnarray*}
for $0\le y < 1 < x$. Then,

$$\Big|\frac{1}{xy}\Phi(x,y)\Big|\le C2H,$$
for every $0 < y < 1 < x$. Finally, we observe

$$\frac{\partial\Phi}{\partial x}(x,y) = 2H \Big[ (1+x)^{2H-1}-(1+x-y)^{2H-1}\Big],$$
for $0 < x < 1 < y\le 1+x$ and

$$\frac{\partial\Phi}{\partial x}(x,y) = 2H \Big[ (1+x)^{2H-1}+ (y-x-1)^{2H-1}\Big],$$
for $0 < x < 1+x < y$. Since $2H-1 < 1$, we observe there exists a constant $\bar{C}$ which only depends on $H$ such that

$$\big|\frac{\partial\Phi}{\partial x}(x,y)\big|\le 2H\bar{C}y,$$
for every $0 < x < 1 < y$. This concludes the proof.
\end{proof}

\begin{lemma}\label{diag}

If $1\le p < \frac{1}{2-2H}$, then
$$\int_{\Delta^1_T(\epsilon,\delta)}\Big|\frac{1}{\epsilon\delta}\mathbb{E}[B^{(i)}_{s,s+\epsilon}B^{(j)}_{t,t+\delta}]\Big|^pdsdt \lesssim (\epsilon \wedge \delta)^{2H + \frac{1}{p} -2}\rightarrow 0,$$
as $(\epsilon,\delta)\downarrow 0$, for every $1\le i,j\le d$. Here, 

$$\Delta^1_T(\epsilon,\delta) = \{(s,t) \in \Delta_T; (t-(\epsilon\wedge \delta))^+< s \le t\}\cup \{(s,t) \in \Delta_T;  s\le \epsilon \wedge \delta\}.$$
\end{lemma}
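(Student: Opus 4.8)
The plan is to split $\Delta^1_T(\epsilon,\delta)$ according to whether the pair $(s,t)$ lies away from or inside the near-diagonal sliver, writing $\rho:=\epsilon\wedge\delta$ throughout. On the portion $\{(s,t)\in\Delta^1_T(\epsilon,\delta):|t-s|>\rho\}$ (which is contained in the boundary strip $\{s\le\rho\}$), the pair $\big(\tfrac{\delta}{t-s},\tfrac{\epsilon}{t-s}\big)$ has at least one coordinate $<1$, hence lies in the set $Q$ of Lemma \ref{grdet}; combining this with representation (\ref{Detrep}) yields the sharp local rate
$$\Big|\tfrac1{\epsilon\delta}\mathbb{E}[B^{(i)}_{s,s+\epsilon}B^{(j)}_{t,t+\delta}]\Big| = \frac{|t-s|^{2H}}{2\epsilon\delta}\Big|\Phi\big(\tfrac{\delta}{t-s},\tfrac{\epsilon}{t-s}\big)\Big| \lesssim_H |t-s|^{2H-2}.$$
On the complementary near-diagonal part $\{|t-s|\le\rho\}$, where Lemma \ref{grdet} no longer applies, I would instead use the integral representation of Remark \ref{pointwiseR}: since $\int_s^{s+\epsilon}|a-b|^{2H-2}\,da\le \tfrac{2}{2H-1}(\epsilon/2)^{2H-1}$ for every $b$ (the integral of the peaked kernel over an interval of length $\epsilon$ is maximised by centring it at $b$), integrating in $b$ and dividing by $\epsilon\delta$ gives the uniform ceiling $|\cdot|\lesssim_H \epsilon^{2H-2}$, and symmetrically $\lesssim_H\delta^{2H-2}$, whence $|\cdot|\lesssim_H(\epsilon\vee\delta)^{2H-2}\le\rho^{2H-2}$ because $2H-2<0$.

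With these two pointwise bounds I would integrate. The near-diagonal part is exactly $\{0<t-s\le\rho,\ 0<s<t\le T\}$, of Lebesgue measure $\int_0^T\min(t,\rho)\,dt\le T\rho$, so raising the ceiling $\rho^{2H-2}$ to the $p$-th power and multiplying by the area gives a contribution $\lesssim_{H,T}\rho^{p(2H-2)}\cdot\rho=\rho^{p(2H-2)+1}$. For the far part I would integrate the sharp rate: substituting $w=t-s$,
$$\int_0^\rho \int_{s+\rho}^T (t-s)^{p(2H-2)}\,dt\,ds\le \rho\int_0^T w^{p(2H-2)}\,dw\lesssim_{H,p,T}\rho,$$
where the inner integral is finite precisely because $p(2H-2)>-1$, i.e. $p<\tfrac1{2-2H}$, which is exactly the hypothesis. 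Adding the two contributions and using $p(2H-2)+1\le 1$ gives the total bound $\lesssim_{H,p,T}\rho^{p(2H-2)+1}$.

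It remains to reconcile the exponent $p(2H-2)+1$ with the claimed $2H+\tfrac1p-2$. A direct computation gives $\big(p(2H-2)+1\big)-\big(2H+\tfrac1p-2\big)=(p-1)\big(2H+\tfrac1p-2\big)$, which is nonnegative since $p\ge1$ and, under the hypothesis $p<\tfrac1{2-2H}$, the factor $2H+\tfrac1p-2$ is strictly positive; hence $p(2H-2)+1\ge 2H+\tfrac1p-2>0$, and for $\rho\in(0,1)$ this yields $\rho^{p(2H-2)+1}\le\rho^{2H+1/p-2}$, so the whole integral is $\lesssim\rho^{2H+1/p-2}=(\epsilon\wedge\delta)^{2H+1/p-2}\to0$ as $(\epsilon,\delta)\downarrow0$. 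I expect the main obstacle to be the near-diagonal sliver $\{|t-s|\le\rho\}$: there the kernel $\tfrac{\partial^2R}{\partial t\partial s}\sim|t-s|^{2H-2}$ is genuinely singular and Lemma \ref{grdet} is unavailable, so one must fall back on the uniform $L^\infty$ ceiling extracted from the integral representation of Remark \ref{pointwiseR} rather than on the pointwise limit.
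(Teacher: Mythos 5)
Your proposal is correct, but it follows a genuinely different route from the paper's own proof, which is much shorter: the paper simply applies Jensen's inequality and the Gaussian scaling of increments, $\big|\frac{1}{\epsilon\delta}\mathbb{E}[B^{(i)}_{s,s+\epsilon}B^{(j)}_{t,t+\delta}]\big|^p\le\frac{1}{(\epsilon\delta)^p}\mathbb{E}|B^{(i)}_{s,s+\epsilon}B^{(j)}_{t,t+\delta}|^p\lesssim_p\big(\epsilon^{H-1}\delta^{H-1}\big)^p\le(\epsilon\wedge\delta)^{p(2H-2)}$, uniformly on all of $\Delta^1_T(\epsilon,\delta)$, and then multiplies by the Lebesgue measure of that set, which is $\lesssim_T\epsilon\wedge\delta$; this yields the integral bound $(\epsilon\wedge\delta)^{p(2H-2)+1}$, i.e.\ exactly the quantity you arrive at, and your closing exponent comparison $\big(p(2H-2)+1\big)-\big(2H+\tfrac1p-2\big)=(p-1)\big(2H+\tfrac1p-2\big)\ge 0$ is the same reconciliation implicit in the paper's statement. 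You instead dissect the region into the sliver $\{t-s\le\epsilon\wedge\delta\}$, where you derive a uniform ceiling $(\epsilon\vee\delta)^{2H-2}$ from the kernel representation of Remark \ref{pointwiseR} via the centered-interval maximization, and the strip $\{s\le\epsilon\wedge\delta,\ t-s>\epsilon\wedge\delta\}$, where you correctly note that $\big(\tfrac{\delta}{t-s},\tfrac{\epsilon}{t-s}\big)\in Q$ so that (\ref{Detrep}) and Lemma \ref{grdet} give the local rate $|t-s|^{2H-2}$, integrable in the $p$-th power precisely when $p<\tfrac{1}{2-2H}$. Both halves are sound, and your pointwise bounds are actually sharper than the paper's (your ceiling $(\epsilon\vee\delta)^{2H-2}$ dominates the crude $\epsilon^{H-1}\delta^{H-1}$ from below, and your far-part estimate is the one the paper only deploys later, in Proposition \ref{deterProp}, on the off-diagonal sets $A_2\cup A_3$). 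What your route costs is economy and logical ordering: the paper's point in Lemma \ref{diag} is to dispose of the near-diagonal/near-axis set by a sup-times-measure argument that needs no kernel analysis at all, reserving Lemma \ref{grdet} for the region where its hypotheses are automatic; your version front-loads that analysis into Lemma \ref{diag} itself, which works but duplicates machinery.
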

\begin{proof}
Fix $i\ne j$. Jensen and H\"{o}lder's inequality yield
\begin{eqnarray*}
\int_0^T \int_{(t-(\epsilon\wedge \delta))^+}^t \Big|\frac{1}{\epsilon\delta}\mathbb{E}[B^{(i)}_{s,s+\epsilon}B^{(j)}_{t,t+\delta}]\Big|^pdsdt&\le& \frac{1}{(\epsilon\delta)^p}\mathbb{E}|B^{(i)}_{s,s+\epsilon}B^{(j)}_{t,t+\delta}|^p \int_0^T \int_{(t-(\epsilon\wedge \delta))^+}^t dsdt\\
&=& \Big(\frac{\epsilon^{H} \delta^{H}}{\epsilon \delta}\Big)^p \int_0^T \{t - (t-(\epsilon\wedge \delta))^+\}dt\\
&\lesssim& \Big(\frac{\epsilon^{H} \delta^{H}}{\epsilon \delta}\Big)^p  (\epsilon\wedge \delta)
\end{eqnarray*}
where

$$
\frac{\epsilon^H \delta^{H}}{\epsilon \delta} (\delta \wedge \epsilon)^{\frac{1}{p}}\lesssim\left\{
\begin{array}{rl}
\delta^{2H + \frac{1}{p}-2}; & \hbox{if} \ \delta < \epsilon \\
\epsilon^{2H + \frac{1}{p}-2};& \hbox{if} \ \delta \ge \epsilon. \\
\end{array}
\right.
$$
In other words, $\frac{\epsilon^H \delta^{H}}{\epsilon \delta} (\delta \wedge \epsilon)^{\frac{1}{p}}\lesssim (\epsilon \wedge \delta)^{2H + \frac{1}{p} -2}\rightarrow 0$ as $(\delta,\epsilon) \downarrow 0$. Similarly,

\begin{eqnarray*}
\mathbb{E}\int_0^T \int_{0}^{\epsilon\wedge \delta} \Big|\frac{1}{\epsilon\delta}\mathbb{E}[B^{(i)}_{s,s+\epsilon}B^{(j)}_{t,t+\delta}]\Big|^pdsdt&\le& \frac{1}{(\epsilon\delta)^p}\mathbb{E}|B^{(i)}_{s,s+\epsilon}B^{(j)}_{t,t+\delta}|^p \int_0^T \int_{0}^{\epsilon\wedge \delta}ds dt\\
&\lesssim& \Big(\frac{\epsilon^{H} \delta^{H}}{\epsilon \delta}\Big)^p  (\epsilon\wedge \delta)\rightarrow 0,
\end{eqnarray*}
as $(\epsilon,\delta)\downarrow 0$.
\end{proof}

\begin{proposition}\label{deterProp}
For each $i,j = 1,\ldots, d$, we have 
$$\int_{[0,T]^2}\Big|\frac{1}{\epsilon\delta}\mathbb{E}\big[ B^{(j)}_{t,t+\delta} B^{(i)}_{s,s+\epsilon} \big] - \frac{\partial^2R}{\partial s\partial t}(s,t)\delta_{ij}  \Big|^p dsdt\rightarrow 0,$$
as $(\epsilon,\delta)\downarrow 0$ for $1\le p < \frac{1}{2-2H}$.
\end{proposition}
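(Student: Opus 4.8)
The plan is to reduce the statement to the simplex $\Delta_T$ and to combine the uniform bound of Lemma \ref{grdet} with a dominated convergence argument, isolating the near-diagonal region by means of Lemma \ref{diag}. First, by the symmetry $(s,t,\epsilon,\delta,i,j)\mapsto(t,s,\delta,\epsilon,j,i)$ the region $\{t<s\}$ contributes identically to $\Delta_T$, while the diagonal and the axes are Lebesgue-null; moreover, for $i\neq j$ independence makes both $\frac{1}{\epsilon\delta}\mathbb{E}[B^{(i)}_{s,s+\epsilon}B^{(j)}_{t,t+\delta}]$ and $\frac{\partial^2 R}{\partial s\partial t}\delta_{ij}$ vanish identically, so it suffices to treat $i=j$ on $\Delta_T$. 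There, setting $x=\frac{\delta}{t-s}$ and $y=\frac{\epsilon}{t-s}$ and using (\ref{Detrep}) together with $\frac{|t-s|^{2H}}{\epsilon\delta}xy=|t-s|^{2H-2}$, one gets
\begin{equation*}
\frac{1}{\epsilon\delta}\mathbb{E}\big[B^{(i)}_{s,s+\epsilon}B^{(i)}_{t,t+\delta}\big]=\frac{|t-s|^{2H-2}}{2}\,\frac{\Phi(x,y)}{xy}.
\end{equation*}

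Next I would record the two ingredients that make the integrand $L^p$-dominated. On the set where $(x,y)\in Q$ --- equivalently off the near-diagonal strip $\{t-s\le\epsilon\wedge\delta\}$ --- Lemma \ref{grdet} bounds $|\Phi(x,y)/(xy)|$ by a constant $C_H$, so the deterministic component is controlled by $\tfrac{C_H}{2}|t-s|^{2H-2}$ uniformly in $(\epsilon,\delta)$. A direct computation yields $\frac{\partial^2 R}{\partial s\partial t}(s,t)=H(2H-1)|t-s|^{2H-2}$, so the subtracted term obeys the same bound. The decisive point is that $|t-s|^{2H-2}\in L^p(\Delta_T)$ precisely when $p(2-2H)<1$, i.e. under the hypothesis $p<\frac{1}{2-2H}$; this single integrability threshold governs the whole statement.

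Finally I would split $\Delta_T=\Delta^1_T(\epsilon,\delta)\cup\big(\Delta_T\setminus\Delta^1_T(\epsilon,\delta)\big)$ with $\Delta^1_T$ as in Lemma \ref{diag} and apply the triangle inequality on each piece. Over $\Delta^1_T$ the pure deterministic-component term has $L^p$-norm tending to $0$ by Lemma \ref{diag}, while the term coming from $\frac{\partial^2 R}{\partial s\partial t}$ is the integral of the fixed function $|t-s|^{p(2H-2)}\in L^1(\Delta_T)$ over the shrinking set $\Delta^1_T$, which vanishes by absolute continuity of the integral (the near-diagonal part is $\lesssim(\epsilon\wedge\delta)^{p(2H-2)+1}$ and the near-axis part $\lesssim\epsilon\wedge\delta$). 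Over the complement the integrand converges to $0$ pointwise by Remark \ref{pointwiseR} and is dominated by $C|t-s|^{p(2H-2)}\in L^1(\Delta_T)$ uniformly in $(\epsilon,\delta)$, so dominated convergence --- applied to the integrand extended by $0$ outside the complement, whose support increases to $\Delta_T$ --- forces this piece to $0$ as well. The main obstacle is exactly this near-diagonal boundary layer, where $(x,y)$ leaves $Q$ and the clean bound $|\Phi(x,y)/(xy)|\le C_H$ fails; it is handled only through the separate rate of Lemma \ref{diag}, and the matching of the decay exponent $|t-s|^{2H-2}$ with the constraint $p<\frac{1}{2-2H}$ is what keeps both estimates finite at the same threshold.
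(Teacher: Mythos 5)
Your proposal is correct and follows essentially the same approach as the paper: reduction to $i=j$ on $\Delta_T$, the uniform bound $\lesssim |t-s|^{2H-2}$ off the near-diagonal strip obtained from (\ref{Detrep}) and Lemma \ref{grdet}, Lemma \ref{diag} on the strip itself, and pointwise convergence (Remark \ref{pointwiseR}) combined with domination by the integrable function $|t-s|^{p(2H-2)}$ to pass to the limit. The differences are cosmetic---the paper splits the off-strip region into the two sets $A_2$ and $A_3$ and invokes uniform integrability rather than dominated convergence, while you are in fact slightly more explicit than the paper about the $\frac{\partial^2 R}{\partial s\partial t}$ contribution over the shrinking set.
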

\begin{proof}
Fix $i=j$. By symmetry, it is sufficient to prove

$$\int_{\Delta_T}\Big|\frac{1}{\epsilon\delta}\mathbb{E}\big[ B^{(j)}_{t,t+\delta} B^{(i)}_{s,s+\epsilon} \big] - \frac{\partial^2R}{\partial s\partial t}(s,t) \Big|^p dsdt\rightarrow 0,$$
as $(\epsilon,\delta)\downarrow 0$ for $1\le p < \frac{1}{2-2H}$. Fix $(\epsilon,\delta) \in (0,1)^2$. We decompose

\begin{eqnarray*}
\Delta_T &=& \{(s,t) \in \Delta_T; 1\le (\epsilon\wedge \delta)(t-s)^{-1}\le (\epsilon\vee \delta)(t-s)^{-1}\}\\
&\cup&  \{(s,t) \in \Delta_T; 0< (\epsilon\wedge \delta)(t-s)^{-1} < 1 \le (\epsilon\vee \delta)(t-s)^{-1} \}\\
&\cup& \{(s,t) \in \Delta_T; 0< (\epsilon\wedge \delta)(t-s)^{-1}\le (\epsilon\vee \delta)(t-s)^{-1} < 1 \}\\
&=:&A_1(\epsilon,\delta)\cup A_2(\epsilon,\delta) \cup A_3(\epsilon,\delta).
\end{eqnarray*}
By Lemma \ref{diag}, if $1\le p < \frac{1}{2-2H}$, we have

$$\int_{A_1(\epsilon,\delta)}\Big|\frac{1}{\epsilon\delta}\mathbb{E}\big[ B^{(j)}_{t,t+\delta} B^{(i)}_{s,s+\epsilon} \big]\Big|^p dsdt\rightarrow 0,$$
as $\epsilon,\delta\downarrow 0$. By using (\ref{Detrep}) and Lemma \ref{grdet}, we observe

$$\int_{A_2(\epsilon,\delta)\cup A_3(\epsilon,\delta) }\Big|\frac{1}{\epsilon\delta}\mathbb{E}\big[ B^{(j)}_{t,t+\delta} B^{(i)}_{s,s+\epsilon} \big]\Big|^p dsdt\lesssim \int_{\Delta_T}|t-s|^{(2H-2)p}dsdt,$$
for $p\ge 1$ such that $(2H-2)p+1>0$. We do have pointwise convergence (see Remark \ref{pointwiseR})

$$\frac{1}{\epsilon\delta}\mathbb{E}\big[ B^{(j)}_{t,t+\delta} B^{(i)}_{s,s+\epsilon} \big]\rightarrow \frac{\partial^2 R}{\partial t\partial s}(s,t),$$
as $(\epsilon,\delta)\downarrow 0$, for each $(s,t) \in \Delta_T$ and hence, we conclude the proof by uniform integrability.
\end{proof}

\subsection{The analysis of the random component}

 In the sequel, we denote \\ $\mathcal{W}^-(\epsilon,\delta;s,t) = (\mathcal{W}^{i,j}(\epsilon,\delta;s,t))$,
where
$$\mathcal{W}^{-,ij}(\epsilon,\delta;s,t):= \frac{1}{\epsilon \delta}\Big[ Z^{1;ij}_{s,t}(\epsilon) Z^{2;ij}_{s,t}(\delta) - \mathbb{E}[
Z^{1;ij}_{s,t}(\epsilon) Z^{2;ij}_{s,t}(\delta)] \Big],$$
for $(s,t) \in [0,T]^2_\star$ and $1\le i,j\le d$. 

The aim of this section is devoted to the proof of the following result.
\begin{proposition}\label{Wpropo}
For $\frac{1}{2} < H < 1$ and $1\le q < \min \Big \{ \frac{1}{2H-1}; \frac{1}{2-2H}\Big\}$, we have  
$$\mathbb{E}\int_{[0,T]^2}| \mathcal{W}^-(\epsilon,\delta;s,t)   - \mathcal{W}(s,t)|^q dsdt\rightarrow 0,$$
as $(\epsilon,\delta)\downarrow 0$. 
\end{proposition}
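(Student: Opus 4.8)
The plan is to upgrade the pointwise convergence of Lemma \ref{Lambdalimit} to convergence in $L^q(\Omega\times\Delta_T;\mathbb{R}^{d\times d})$ by a Vitali argument; by the symmetry \eqref{INTREP1}--\eqref{INTREP2} it suffices to work on the simplex $\Delta_T$. For each fixed $(s,t)\in\Delta_T$ Lemma \ref{Lambdalimit} gives $\mathcal{W}^-(\epsilon,\delta;s,t)\to\mathcal{W}(s,t)$ almost surely; since $\Delta_T$ carries finite Lebesgue measure, Fubini promotes this to $d\mathbb{P}\times ds\,dt$-a.e.\ convergence on $\Omega\times\Delta_T$, hence convergence in measure along any sequence $(\epsilon_n,\delta_n)\downarrow 0$. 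Fixing an exponent $r$ with $q<r<\min\{\tfrac1{2H-1};\tfrac1{2-2H}\}$, the proposition then follows from the Vitali convergence theorem as soon as one establishes the uniform bound
$$\sup_{(\epsilon,\delta)\in(0,1)^2}\ \mathbb{E}\int_{\Delta_T}\big|\mathcal{W}^-(\epsilon,\delta;s,t)\big|^{r}\,ds\,dt<\infty,$$
which secures uniform integrability of $\{|\mathcal{W}^-(\epsilon,\delta;\cdot)|^q\}$.

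To produce this bound I would first estimate the integrand in $L^r(\mathbb{P})$ pointwise in $(s,t)$. For fixed $(s,t)$ the entry $\mathcal{W}^{-,ij}(\epsilon,\delta;s,t)$ is a centered second-order polynomial of the Gaussian vector $(B^{(i)}_s,B^{(i)}_{s,t},B^{(j)}_s,B^{(j)}_{s,t})$, hence an element of the sum of the first three Wiener chaoses. Gaussian hypercontractivity therefore bounds every $L^r(\mathbb{P})$-norm by the $L^2(\mathbb{P})$-norm, and the Wick/Isserlis identity $\mathrm{Var}(XY)=\mathbb{E}X^2\,\mathbb{E}Y^2+(\mathbb{E}XY)^2$ for centered jointly Gaussian $X,Y$ yields
$$\big\|\mathcal{W}^{-,ij}(\epsilon,\delta;s,t)\big\|_{L^r(\mathbb{P})}\lesssim_{r}\Big\|\tfrac1\epsilon Z^{1;ij}_{s,t}(\epsilon)\Big\|_{L^2(\mathbb{P})}\Big\|\tfrac1\delta Z^{2;ij}_{s,t}(\delta)\Big\|_{L^2(\mathbb{P})}.$$
Using the increment representations \eqref{Z1epsINCR}--\eqref{Z2deINCR}, each factor expands into the three quantities $v(s)=s^{2H}$, $|t-s|^{2H}$ and $\mathbb{E}[B_sB_{s,t}]=\varphi(s,t)$, weighted by the regression coefficients $\tfrac1\epsilon\eta_{1k}(\epsilon,s,t)$ and $\tfrac1\delta\eta_{2k}(\delta,s,t)$.

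The decisive step is to bound these weighted coefficients by a deterministic function of $(s,t)$, uniformly in $(\epsilon,\delta)$. Inserting the determinant estimate \eqref{deterest} and the growth bound $|\vartheta_1(x)|\le 4H|x|$ of Lemma \ref{growth} into the explicit formulas \eqref{eta11}--\eqref{eta22} already controls the coefficients in the bulk; near the axis $\{s=0\}$, however, the naive term-by-term use of Lemma \ref{growth} is too lossy, and one must exploit the cancellations made visible by the telescoping identity $\tfrac{u^{2H}}2\vartheta_1(\rho/u)=\tfrac12[(u+\rho)^{2H}-u^{2H}-\rho^{2H}]$ together with the sharp bound $\varphi(s,t)\lesssim_H s\,|t-s|^{2H-1}$. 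I would organize the estimate by splitting $\Delta_T$ according to the relative sizes of $t-s$, $s$ and $\epsilon\wedge\delta$, handling the small-scale region $\{t-s<\epsilon\wedge\delta\}$ through the contraction bound $\|\tfrac1\epsilon Z^1\|_{L^2}\le\epsilon^{H-1}$ together with the fact that this region has measure $\lesssim\epsilon\wedge\delta$, as in Lemma \ref{diag}.

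The outcome of this bookkeeping is a deterministic kernel whose $r$-th power is integrable on $\Delta_T$ precisely inside the stated range. The diagonal behaviour $\|\tfrac1\epsilon Z^1\|_{L^2}\|\tfrac1\delta Z^2\|_{L^2}\lesssim|t-s|^{2H-2}$ forces $(2H-2)r>-1$, that is $r<\tfrac1{2-2H}$; the complementary constraint $r<\tfrac1{2H-1}$ is dictated by the $(\epsilon,\delta)$-uniform control of the regression coefficients in the boundary and small-scale regimes, giving the admissible window $q<r<\min\{\tfrac1{2H-1};\tfrac1{2-2H}\}$. I expect the main obstacle to lie exactly here: obtaining $(\epsilon,\delta)$-uniform coefficient bounds that are simultaneously sharp near the diagonal and near the axis, so that the crude contraction estimate (which blows up as $\epsilon,\delta\downarrow 0$) is only used on a region whose shrinking measure compensates it. Once the uniform $L^r$ bound is in hand, the convergence itself is immediate from Vitali.
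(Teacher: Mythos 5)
Your strategy is structurally the same as the paper's: reduce to the simplex $\Delta_T$ by the symmetry \eqref{INTREP1}--\eqref{INTREP2}, take the pointwise a.s.\ convergence from Lemma \ref{Lambdalimit}, and upgrade it by a uniform-integrability argument (your Vitali step plays the role of the paper's dominated-convergence step). Two of your ingredients are correct variations. First, the hypercontractivity/Isserlis reduction, which bounds the $L^r(\mathbb{P})$-norm of the second-chaos variable $\mathcal{W}^{-,ij}(\epsilon,\delta;s,t)$ by $\tfrac{1}{\epsilon\delta}\|Z^{1;ij}_{s,t}(\epsilon)\|_{L^2(\mathbb{P})}\|Z^{2;ij}_{s,t}(\delta)\|_{L^2(\mathbb{P})}$, is sound and even economizes over the paper, which instead dominates the four coefficient products \eqref{Exf1}--\eqref{Exf4} separately and pairs them with the normalized moment bounds \eqref{Exf6}--\eqref{Exf7}. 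Second, your treatment of the strip $\{t-s<\epsilon\wedge\delta\}$ via the contraction bound and its $O(\epsilon\wedge\delta)$ measure is coherent, although the paper needs such an argument only for the deterministic component (Lemma \ref{diag}): its coefficient bounds for the random component hold uniformly on all of $\Delta_T$, so no strip is required there.

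The gap is that the decisive step is asserted rather than proved. Everything reduces to exhibiting a deterministic $F\in L^r(\Delta_T)$ with $\tfrac{1}{\epsilon\delta}\|Z^{1}_{s,t}(\epsilon)\|_{L^2(\mathbb{P})}\|Z^{2}_{s,t}(\delta)\|_{L^2(\mathbb{P})}\le F(s,t)$ for all $(\epsilon,\delta)\in(0,1)^2$; you name the right tools (Lemma \ref{growth}, Lemma \ref{fhlemma}, the bound \eqref{Exf5}, the telescoping cancellation) and then declare that ``the outcome of this bookkeeping'' is integrable in the stated range. That bookkeeping \emph{is} the proof: in the paper it occupies Lemma \ref{ddeltaLEMMA}, which isolates the cancellation as $|d(\delta,s,t)|\lesssim_H \delta\, s(t^{-1}+1)$ uniformly in $\delta$, and Lemmas \ref{lemmaf1}--\ref{lemmaf3}, which propagate it through the regression coefficients to produce the explicit majorants $t^{-1}+1$, $s^{H-1}|t-s|^{H-1}$, $s^{1-2H}(t^{2H-2}+1)$ and $|t-s|^{2H-2}$; the $q$-integrability of these majorants (in particular of $s^{1-2H}$, which forces $q(2H-1)<1$) is exactly what produces, and only then confirms, your guess about where the restriction $q<\min\big\{\frac{1}{2H-1};\frac{1}{2-2H}\big\}$ comes from. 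Until an estimate of this type is actually derived --- and it is delicate, since, as you yourself point out, the naive term-by-term use of Lemma \ref{growth} is too lossy near the axis --- the proposal is a correct plan that matches the paper's architecture, but not a proof of the proposition.
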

Before starting the proof of Proposition \ref{Wpropo}, it is convenient to say a few words about the strategy. By Lemma \ref{Lambdalimit}, we know that 

$$\lim_{(\epsilon,\delta)\downarrow 0} \mathcal{W}^{-,ij}(\epsilon,\delta;s,t)= \mathcal{W}(s,t)$$
a.s. for each $(s,t) \in [0,T]^2_\star$. By using representations (\ref{Z1epsINCR}), (\ref{Z2deINCR}), (\ref{eta11}), (\ref{eta12}), (\ref{eta21}) and (\ref{eta22}), we have

\begin{eqnarray*} 
\mathcal{W}^{-,ij}(\epsilon,\delta;s,t)&=&\frac{1}{\epsilon\delta} \eta_{11}(\epsilon,s,t)\eta_{22}(\delta,s,t)\Big[ B^{(i)}_s B^{(j)}_{s,t} - \varphi(s,t)\delta_{ij} \Big]\\
&+&  \frac{1}{\epsilon\delta}\eta_{12}(\epsilon,s,t)\eta_{21}(\delta,s,t) \Big[ B^{(j)}_s B^{(i)}_{s,t} - \varphi(s,t)\delta_{ij} \Big] \\
&+&   \frac{1}{\epsilon\delta} \eta_{11}(\epsilon,s,t)\eta_{21}(\delta,s,t) \big[ B^{(i)}_s B^{(j)}_s  - s^{2H}\delta_{ij}\big]\\ 
&+& \frac{1}{\epsilon\delta} \eta_{12}(\epsilon,s,t) \eta_{22}(\delta,s,t)\Big[  B^{(i)}_{s,t} B^{(j)}_{s,t} - |t-s|^{2H}\delta_{ij}\Big]
\end{eqnarray*}
for $(s,t) \in \Delta_T$. In Lemmas \ref{lemmaf1}, \ref{lemmaf2} and \ref{lemmaf3}, we will show there exist $f_1, f_2, f_3, f_4 \in L^q(\Delta_T)$ for $1\le q < \min \Big \{ \frac{1}{2H-1}; \frac{1}{2-2H}\Big\}$ such that  

\begin{equation}\label{Exf1}
\Big|\frac{1}{\epsilon\delta} \eta_{11}(\epsilon,s,t)\eta_{22}(\delta,s,t)\Big| s^H |t-s|^H\lesssim_{H,T} f_1(s,t),
\end{equation}
\begin{equation}\label{Exf2}
\Big|\frac{1}{\epsilon\delta}\eta_{12}(\epsilon,s,t)\eta_{21}(\delta,s,t)\Big|s^H|t-s|^H\lesssim_{H,T} f_2(s,t),
\end{equation}
\begin{equation}\label{Exf3} 
\Big|\frac{1}{\epsilon\delta} \eta_{11}(\epsilon,s,t)\eta_{21}(\delta,s,t)\Big| s^{2H}\lesssim_{H,T} f_3(s,t),
\end{equation}
\begin{equation}\label{Exf4}
\Big|\frac{1}{\epsilon\delta} \eta_{12}(\epsilon,s,t) \eta_{22}(\delta,s,t)\Big||t-s|^{2H}\lesssim_{H,T} f_4(s,t),
\end{equation}
for every $(s,t)\in \Delta_T$ and $(\epsilon,\delta) \in (0,1)^2$. Observe that $\varphi(s,t)>0$ and 

\begin{equation}\label{Exf5}
\varphi(s,t)\lesssim_H \min \{s  t^{2H-1}, |t-s| t^{2H-1}, s^H|t-s|^H\},
\end{equation}
for $(s,t) \in \Delta_T$. Moreover,   

\begin{equation}\label{Exf6}
\sup_{(s,t) \in \Delta_T}\| B^{(i)}_s B^{(j)}_{s,t} s^{-H}|t-s|^{-H}\|^p_p + \sup_{(s,t) \in \Delta_T} \| B^{(i)}_s B^{(j)}_{s} s^{-2H}\|_p^p< \infty,
\end{equation} 

\begin{equation}\label{Exf7}
\sup_{(s,t) \in \Delta_T}\|B^{(i)}_{s,t} B^{(j)}_{s,t} |t-s|^{-2H}\|_p^p < \infty,
\end{equation}
for every $p> 1$ and $1\le i,j\le d$. The estimates (\ref{Exf1}), (\ref{Exf2}), (\ref{Exf3}), (\ref{Exf4}), (\ref{Exf5}), (\ref{Exf6}), (\ref{Exf7}) combined with the symmetry described in (\ref{INTREP1}) and (\ref{INTREP2}) will allow us to conclude the proof of Proposition \ref{Wpropo}. In order to shorten notation, we set

$$d(\delta,s,t):= \frac{t^{2H}}{2} \vartheta_1 \Big( \frac{\delta}{t} \Big) - \frac{|t-s|^{2H}}{2}\vartheta_1 \Big( \frac{\delta}{t-s}\Big) $$
for $(s,t)\in \Delta_T$. 
\begin{lemma}\label{ddeltaLEMMA}

$$
|d(\delta,s,t)|\lesssim_H \delta s (t^{-1}+1),$$
for $0 < s < t$. 
\end{lemma}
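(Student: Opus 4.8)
The plan is to exploit the \emph{difference} structure of $d(\delta,s,t)$. Bounding its two terms separately through Lemma \ref{growth} would only yield $|d(\delta,s,t)|\lesssim_H \delta\, t^{2H-1}$, which lacks the crucial factor $s$; so the whole point is to extract the cancellation between the two summands. First I would expand $\vartheta_1$ and observe that the two $\delta^{2H}$ contributions cancel, leaving
\[
d(\delta,s,t) = \tfrac12\big[\phi(t) - \phi(t-s)\big], \qquad \phi(a):=(a+\delta)^{2H} - a^{2H},
\]
for $0<s<t$. Thus $d$ is a first difference of $\phi$, which is itself a first difference of $x\mapsto x^{2H}$; this double-difference (equivalently, double-integral) form is what produces the gain.

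Next, writing $\phi(t)-\phi(t-s) = \int_{t-s}^t \phi'(a)\,da = 2H\int_{t-s}^t\big[(a+\delta)^{2H-1}-a^{2H-1}\big]\,da$ and applying the mean value theorem to $x\mapsto x^{2H-1}$ on the interval $(a,a+\delta)$, together with the fact that $x\mapsto x^{2H-2}$ is decreasing (since $2H-2<0$), I obtain the pointwise estimate $(a+\delta)^{2H-1}-a^{2H-1}\le (2H-1)a^{2H-2}\delta$. Integrating this over $(t-s,t)$ collapses the bound to
\[
|d(\delta,s,t)| \le H\,\delta\,\big[t^{2H-1} - (t-s)^{2H-1}\big].
\]

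Finally I reduce the statement to the elementary estimate $t^{2H-1}-(t-s)^{2H-1}\lesssim_H s\,(t^{-1}+1)$ by a dichotomy on the size of $s$ relative to $t$. When $s\le t/2$ the integrand $x^{2H-2}$ stays below $(t/2)^{2H-2}$ on $(t-s,t)$, giving a bound $\lesssim_H s\,t^{2H-2}$; when $s> t/2$ I simply drop the negative term, bound the difference by $t^{2H-1}$, and use $s(t^{-1}+1)\ge \tfrac12(1+t)$. Both branches close using the single elementary inequality $x^{2H-1}\le 1+x$, valid for $x>0$ and $2H-1\in(0,1)$, which also furnishes $x^{2H-2}\le x^{-1}+1$. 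The main obstacle here is conceptual rather than computational: one must avoid estimating $d$ termwise and instead pass to the difference/double-integral representation, and in the last step resist taking the worst point of $x^{2H-2}$ on $(t-s,t)$ — doing so would reintroduce a divergent $(t-s)^{2H-2}$ factor — integrating the pointwise mean-value bound instead.
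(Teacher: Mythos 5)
Your proof is correct, and it takes a genuinely different route from the paper's. The paper never exploits the exact cancellation you found: it keeps $d(\delta,s,t)$ in the form $\tfrac{t^{2H}}{2}\vartheta_1(\delta/t)-\tfrac{|t-s|^{2H}}{2}\vartheta_1(\delta/(t-s))$, splits it by adding and subtracting (a telescoping of the difference of products), bounds the piece $\big(t^{2H}-(t-s)^{2H}\big)\vartheta_1(\delta/t)$ via the growth bound $|\vartheta_1(x)|\le 4H|x|$ of Lemma \ref{growth}, and handles $(t-s)^{2H}\big[\vartheta_1(\delta/(t-s))-\vartheta_1(\delta/t)\big]$ by monotonicity of $\vartheta_1$, the mean value theorem and subadditivity of $x\mapsto x^{2H-1}$; this leaves a residual term $\delta^{2H}st^{-1}$, so the paper must invoke $\delta^{2H}\le\delta$, i.e. $\delta\in(0,1)$, to close. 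Your argument instead observes that the two $\delta^{2H}$ contributions cancel identically, reducing $d$ to one half of a second difference of $x\mapsto x^{2H}$, whence the integral representation plus the mean value theorem and monotonicity of $x^{2H-2}$ give the clean intermediate bound $|d(\delta,s,t)|\le H\delta\big[t^{2H-1}-(t-s)^{2H-1}\big]$, finished by your dichotomy $s\le t/2$ versus $s>t/2$. What your route buys: it is shorter, bypasses Lemma \ref{growth} entirely, produces a sharper intermediate estimate, and — since no $\delta^{2H}$ term survives — holds uniformly for all $\delta>0$ rather than only $\delta\in(0,1)$. What the paper's route buys: it stays inside the $\vartheta_1$-calculus used throughout the rest of Section 3 (the same objects reappear in the $\eta$-estimates of Lemmas \ref{lemmaf1}--\ref{lemmaf3}), and the add-and-subtract technique is more robust in situations, alluded to in the introduction, where the covariance kernel is not exactly fractional and no such algebraic cancellation is available. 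Your closing warning is also well taken: applying the mean value theorem at the worst point would produce $(t-s)^{2H-2}$, which is not dominated by $s(t^{-1}+1)$ as $s\uparrow t$, so the integrated (or dichotomized) bound is genuinely needed.
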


\begin{proof}
Mean value theorem and the fact that $2H-1>0$ imply 
$$\big(t^{2H} - (t-s)^{2H}\big)\le 2Ht^{2H-1}s,$$
whenever $0 < s < t$. Then, Lemma \ref{growth} and triangle inequality yield

$$ \Big|\frac{t^{2H}}{2} \vartheta_1 \Big( \frac{\delta}{t} \Big) - \frac{|t-s|^{2H}}{2}\vartheta_1 \Big( \frac{\delta}{t-s}\Big)\Big|$$
$$\lesssim \Big| \big(t^{2H} - (t-s)^{2H}\big) \vartheta_1 \Big( \frac{\delta}{t}\Big)   \Big| +  \big(t - s \big)^{2H} \Big|\vartheta_1 \Big( \frac{\delta}{t}\Big) - \vartheta_1 \Big( \frac{\delta}{t-s}\Big)   \Big|$$
$$
\lesssim s t^{2H-2} \delta + \big(t - s \big)^{2H} \Big|\vartheta_1 \Big( \frac{\delta}{t}\Big) - \vartheta_1 \Big( \frac{\delta}{t-s}\Big)   \Big|,
$$
for $0 < s<t$. The function $\vartheta_1$ is strictly increasing and hence by applying again Mean value theorem and using the fact that $2H-1>0$, we get

\begin{eqnarray*}
\Big|\vartheta_1 \Big( \frac{\delta}{t}\Big) - \vartheta_1 \Big( \frac{\delta}{t-s}\Big)   \Big|&=&\vartheta_1 \Big( \frac{\delta}{t-s}\Big) - \vartheta_1 \Big( \frac{\delta}{t}\Big)\\
&=&\Big(1+\frac{\delta}{t-s}\Big)^{2H} - \Big(1+\frac{\delta}{t}\Big)^{2H}  - \Big(\frac{\delta}{t-s}\Big)^{2H} + \Big(\frac{\delta}{t}\Big)^{2H}\\
&<& \Big(1+\frac{\delta}{t-s}\Big)^{2H} - \Big(1+\frac{\delta}{t}\Big)^{2H}\\
&\lesssim& \frac{\delta s}{t |t-s|}\Big( 1+ \frac{\delta}{t-s}\Big)^{2H-1}\\
&\lesssim&  \frac{\delta s}{t |t-s|} +  \frac{ s}{t |t-s|} \frac{\delta^{2H}}{|t-s|^{2H-1}},
\end{eqnarray*}
whenever $0 < s < t$. Then, 
\begin{eqnarray*}
|d(\delta,s,t)|&\lesssim_H& \delta st^{2H-2}+ \delta \frac{s}{t}|t-s|^{2H-1} + \delta^{2H}\frac{s}{t}\\
&\lesssim_H& \delta st^{2H-2} + \delta^{2H}s t^{-1}\\
&\lesssim_H& s\delta (t^{-1}+1), 
\end{eqnarray*}
for $0 < s < t$, because $2-2H<1$ and $2H>1$.





\end{proof}

\begin{lemma}\label{lemmaf1}
\begin{eqnarray}
\nonumber \Big|\frac{1}{\epsilon}\eta_{12}(\epsilon,s,t)\frac{1}{\delta} \eta_{22}(\delta,s,t) |t-s|^{2H}\Big|&\lesssim_{H,T}& t^{-1} + s^{H-1}|t-s|^{H-1} + s^{1-2H}(t^{2H-2}+1)\\ 
\label{T1}&+& |t-s|^{2H-2},
\end{eqnarray}
for every $(\epsilon,\delta) \in (0,1)^2$ and $(s,t) \in \Delta_T$. Moreover, the right-hand side of (\ref{T1}) is $q$-Lebesgue integrable over $\Delta_T$ for every $1\le q < \min \Big\{\frac{1}{2H-1}, \frac{1}{2-2H}\Big\}$.
\end{lemma}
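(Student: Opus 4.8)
The plan is to bound the two factors $\frac{1}{\epsilon}\eta_{12}(\epsilon,s,t)$ and $\frac{1}{\delta}\eta_{22}(\delta,s,t)$ separately and uniformly in $(\epsilon,\delta)$, then multiply and absorb the determinant. First I would use Remark \ref{phi1rem} to rewrite the $\vartheta_2$-combination in $\eta_{22}$ as $d(\delta,s,t)$, so that
\begin{equation*}
\Theta_{s,t}\eta_{22}(\delta,s,t) = -d(\delta,s,t)\varphi(s,t) + \frac{s^{2H}|t-s|^{2H}}{2}\vartheta_1\Big(\frac{\delta}{t-s}\Big),
\end{equation*}
while keeping (\ref{eta12}) as it stands for $\eta_{12}$. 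Applying Lemma \ref{growth} to each $\vartheta_1$ (which gives $\frac{1}{\epsilon}|\vartheta_1(\epsilon/s)|\le 4H/s$ and $\frac{1}{\delta}|\vartheta_1(\delta/(t-s))|\le 4H/(t-s)$) together with Lemma \ref{ddeltaLEMMA} for $\frac{1}{\delta}d(\delta,s,t)$, I expect the uniform bounds
\begin{equation*}
\Big|\frac{1}{\epsilon}\Theta_{s,t}\eta_{12}(\epsilon,s,t)\Big|\lesssim_H s^{2H-1}\varphi(s,t) + s^{2H}|t-s|^{2H-1},
\end{equation*}
\begin{equation*}
\Big|\frac{1}{\delta}\Theta_{s,t}\eta_{22}(\delta,s,t)\Big|\lesssim_H s(t^{-1}+1)\varphi(s,t) + s^{2H}|t-s|^{2H-1}.
\end{equation*}

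Next I would multiply the two displays, divide by $\Theta^2_{s,t}$ using $\Theta^{-2}_{s,t}\lesssim_H |t-s|^{-4H}s^{-4H}$ on $\Delta_T$ from Lemma \ref{fhlemma} (valid there since $s<t$), and multiply by $|t-s|^{2H}$, so that the prefactor reduces to $|t-s|^{-2H}s^{-4H}$. Expanding the product of the two brackets produces four monomials, and for each I would invoke the appropriate majorant for $\varphi$ from (\ref{Exf5}): the cross term carrying $\varphi^2$ is controlled by $\varphi^2\lesssim s^{2H}|t-s|^{2H}$ and collapses to $t^{-1}+1\lesssim_T t^{-1}$; the two mixed terms carrying a single $\varphi$ are handled respectively by $\varphi\lesssim s^H|t-s|^H$ (producing $s^{H-1}|t-s|^{H-1}$) and by $\varphi\lesssim |t-s|t^{2H-1}$ (producing $s^{1-2H}(t^{2H-2}+t^{2H-1})\lesssim_{H,T}s^{1-2H}(t^{2H-2}+1)$); and the remaining pure term yields $|t-s|^{2H-2}$. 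Summing these contributions gives exactly the right-hand side of (\ref{T1}). The delicate point is the bookkeeping of \emph{which} of the three bounds on $\varphi$ to use in each monomial so that the stated majorant — and no worse power — comes out: a suboptimal choice would either fail to cancel the $s^{-4H}$ prefactor or produce a non-integrable exponent.

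Finally, for the integrability claim I would treat the four summands of (\ref{T1}) separately on $\Delta_T=\{0<s<t\le T\}$ by iterated integration in $s$ and then $t$, writing $(t^{2H-2}+1)^q\lesssim t^{(2H-2)q}+1$ for the third term. The convergence conditions reduce to: $\int_0^T t^{1-q}\,dt<\infty$ (from $t^{-1}$ and from the leading part of the third term, requiring $q<2$); a Beta-type integral $\int_0^1 x^{(H-1)q}(1-x)^{(H-1)q}\,dx<\infty$ together with $\int_0^T t^{2(H-1)q+1}\,dt<\infty$ (from $s^{H-1}|t-s|^{H-1}$, requiring $q<\frac{1}{1-H}$); the condition $q<\frac{1}{2H-1}$ (from the inner $\int_0^t s^{(1-2H)q}\,ds$ in the third term); and $q<\frac{1}{2-2H}$ (from $\int_0^t|t-s|^{(2H-2)q}\,ds$ in the fourth term). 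Since $\min\{\frac{1}{2H-1};\frac{1}{2-2H}\}\le 2$ and $\le\frac{1}{1-H}$ for all $\frac12<H<1$, the binding constraints are exactly $q<\frac{1}{2H-1}$ and $q<\frac{1}{2-2H}$, which is the asserted range. This last step is routine once the pointwise bound is established.
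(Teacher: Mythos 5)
Your proposal is correct and takes essentially the same route as the paper: the paper expands $\Theta_{s,t}^2\,\eta_{12}(\epsilon,s,t)\eta_{22}(\delta,s,t)$ directly into the same four terms that arise from multiplying your two factor bounds, and estimates them with exactly the same ingredients (Remark \ref{phi1rem} to pass to $d(\delta,s,t)$, Lemma \ref{growth}, Lemma \ref{ddeltaLEMMA}, Lemma \ref{fhlemma}, and the identical term-by-term choice among the three $\varphi$-bounds of (\ref{Exf5})), arriving at the same four majorants $t^{-1}+1$, $s^{H-1}|t-s|^{H-1}$, $s^{1-2H}(t^{2H-2}+1)$, $|t-s|^{2H-2}$. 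Your integrability check is in fact more detailed than the paper's, which dismisses that step as straightforward.
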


\begin{proof}

By using Remark \ref{phi1rem}, we write


\begin{eqnarray*}
\frac{1}{\epsilon}\eta_{12}(\epsilon,s,t)\frac{1}{\delta}\eta_{22}(\delta,s,t) |t-s|^{2H} &=& \frac{\varphi^2(s,t)|t-s|^{2H}}{\epsilon \delta \Theta^2_{s,t}}\frac{s^{2H}}{2}\vartheta_1 \Big( \frac{\epsilon}{s}\Big)d(\delta,s,t)\\
&-& \frac{|t-s|^{2H}}{\epsilon \delta \Theta^2_{s,t}}\frac{s^{2H}}{2}\vartheta_1 \Big( \frac{\epsilon}{s}\Big)\frac{s^{2H}}{2}|t-s|^{2H}\vartheta_1 \Big( \frac{\delta}{t-s}\Big)\varphi(s,t)\\
&+&\frac{|t-s|^{2H}}{\epsilon \delta \Theta^2_{s,t}}\varphi(s,t)d(\delta,s,t)\frac{|t-s|^{2H}}{2}s^{2H}\vartheta_1 \Big( \frac{-\epsilon}{t-s} \Big)\\
&-& \vartheta_1 \Big( \frac{-\epsilon}{t-s} \Big)\vartheta_1 \Big( \frac{\delta}{t-s} \Big)\Bigg(\frac{|t-s|^{2H}s^{2H}}{2}\Bigg)^2\frac{|t-s|^{2H}}{\epsilon \delta \Theta^2_{s,t}}\\
&=:& I_1(\epsilon,\delta,s,t)+I_2(\epsilon,\delta,s,t)+I_3(\epsilon,\delta,s,t)+I_4(\epsilon,\delta,s,t).
\end{eqnarray*}

By definition
$$
\Theta_{s,t}= |t-s|^{2H} A(s,t),
$$
where $A$ is the function defined in Lemma \ref{fhlemma}.
By Lemmas \ref{growth}, \ref{fhlemma} and (\ref{Exf5}), we observe

\begin{equation}\label{e4}
|I_4(\epsilon,\delta,s,t)|\lesssim |t-s|^{2H-2}
\end{equation}
and 

\begin{eqnarray}
\nonumber|I_2(\epsilon,\delta,s,t)|&\lesssim& |t-s|^{-1} s^{-1} |\varphi(s,t)|\\
\label{e5}&\lesssim& s^{H-1}|t-s|^{H-1},
\end{eqnarray}
for $(s,t)\in \Delta_T$. Similarly, Lemmas \ref{growth}, \ref{fhlemma}, \ref{ddeltaLEMMA} and (\ref{Exf5}) yield 

\begin{eqnarray*}
|I_1(\epsilon,\delta,s,t)|&\lesssim& \Big| \frac{d(\delta,s,t)}{\delta} \Big|\varphi^2(s,t) |t-s|^{-2H} s^{-2H-1}\\
&\lesssim& s(t^{-1}+1)\varphi^2(s,t) |t-s|^{-2H} s^{-2H-1}\\
&+& (t^{-1}+1)\varphi^2(s,t) |t-s|^{-2H} s^{-2H}\\
&\lesssim& t^{-1} +1,
\end{eqnarray*}
and
\begin{eqnarray*}
|I_3(\epsilon,\delta,s,t)|&\lesssim& \Big| \frac{d(\delta,s,t)}{\delta} \Big|\varphi (s,t) |t-s|^{-1} s^{-2H}\\
&\lesssim& s(t^{-1}+1)\varphi(s,t)|t-s|^{-1}s^{-2H}\\
&\lesssim& s^{1-2H} (t^{-1}+1)t^{2H-1}\\
&\lesssim& s^{1-2H}(t^{2H-2} +1),
\end{eqnarray*}
for $(s,t) \in \Delta_T$. This shows the estimate (\ref{T1}). It is straightforward to check that the right-hand side of (\ref{T1}) is $q$-Lebesgue integrable over $\Delta_T$ for every $1\le q < \min \Big\{\frac{1}{2H-1}, \frac{1}{2-2H}\Big\}$.
\end{proof}

\begin{lemma}\label{lemmaf2}
\begin{eqnarray}
\nonumber\big|\frac{1}{\epsilon}\eta_{11}(\epsilon,s,t)\frac{1}{\delta}\eta_{21}(\delta,s,t) s^{2H}\big|&\lesssim_{H,T}& (t^{-1}+1) + s^{H-1}|t-s|^{H-1} + s^{1-2H}(t^{2H-2}+1)\\ 
\label{T2}&+& |t-s|^{2H-2},
\end{eqnarray}
for every $(\epsilon,\delta) \in (0,1)^2$ and $(s,t) \in \Delta_T$. Moreover, the right-hand side of (\ref{T2}) is $q$-Lebesgue integrable over $\Delta_T$ for $1\le q < \min\Big\{\frac{1}{2-2H}; \frac{1}{2H-1} \Big\}$.
\end{lemma}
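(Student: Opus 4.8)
The plan is to treat $\frac{1}{\epsilon}\eta_{11}(\epsilon,s,t)\frac{1}{\delta}\eta_{21}(\delta,s,t)s^{2H}$ exactly as its counterpart $\frac{1}{\epsilon}\eta_{12}(\epsilon,s,t)\frac{1}{\delta}\eta_{22}(\delta,s,t)|t-s|^{2H}$ was treated in Lemma \ref{lemmaf1}. First I would write $\frac{1}{\epsilon\delta}\eta_{11}\eta_{21}\,s^{2H}=\frac{s^{2H}}{\epsilon\delta\,\Theta_{s,t}^2}\,(\Theta_{s,t}\eta_{11})(\Theta_{s,t}\eta_{21})$ and substitute the explicit expressions \eqref{eta11} and \eqref{eta21}, using Remark \ref{phi1rem} to replace the $\vartheta_2$-bracket inside $\Theta_{s,t}\eta_{21}$ by $d(\delta,s,t)$. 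Expanding the product of the two two-term factors yields four summands $J_1,\dots,J_4$, arranged so that $J_1$ carries $\vartheta_1(\epsilon/s)\,d(\delta,s,t)$, $J_2$ carries $\vartheta_1(\epsilon/s)\,\vartheta_1(\delta/(t-s))\,\varphi$, $J_3$ carries $\vartheta_1(-\epsilon/(t-s))\,d(\delta,s,t)\,\varphi$, and $J_4$ carries $\vartheta_1(-\epsilon/(t-s))\,\vartheta_1(\delta/(t-s))\,\varphi^2$.

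Next I would bound each $J_k$ uniformly in $(\epsilon,\delta)$ using the same three devices as in Lemma \ref{lemmaf1}: Lemma \ref{growth} to get $\frac1\epsilon|\vartheta_1(\epsilon/s)|\lesssim s^{-1}$, $\frac1\epsilon|\vartheta_1(-\epsilon/(t-s))|\lesssim |t-s|^{-1}$ and $\frac1\delta|\vartheta_1(\delta/(t-s))|\lesssim |t-s|^{-1}$; Lemma \ref{ddeltaLEMMA} to get $\frac1\delta|d(\delta,s,t)|\lesssim s(t^{-1}+1)$; and Lemma \ref{fhlemma}, which on $\Delta_T$ (where $s<t$) gives $\Theta_{s,t}^{-2}\lesssim_H |t-s|^{-4H}s^{-4H}$ and lets the surviving powers of $|t-s|^{2H}$ and $s^{2H}$ cancel. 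Tracking exponents yields $|J_1|\lesssim t^{-1}+1$, $|J_2|\lesssim s^{-1}|t-s|^{-1}\varphi(s,t)$, $|J_3|\lesssim s^{1-2H}|t-s|^{-1}\varphi(s,t)(t^{-1}+1)$ and $|J_4|\lesssim s^{-2H}|t-s|^{-2}\varphi^2(s,t)$. Inserting the appropriate bound from \eqref{Exf5} — namely $\varphi\lesssim s^H|t-s|^H$ for $J_2$ and $J_4$, and $\varphi\lesssim |t-s|\,t^{2H-1}$ for $J_3$, together with $t^{2H-1}\lesssim_T 1$ — collapses these to $t^{-1}+1$, $s^{H-1}|t-s|^{H-1}$, $s^{1-2H}(t^{2H-2}+1)$ and $|t-s|^{2H-2}$, whose sum is the right-hand side of \eqref{T2}.

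Finally I would check $L^q$-integrability of the right-hand side of \eqref{T2} over $\Delta_T$ term by term. The $(t^{-1}+1)$ term needs $q<2$; the $s^{H-1}|t-s|^{H-1}$ term needs $(H-1)q>-1$, i.e. $q<1/(1-H)$; the $s^{1-2H}$-factor forces $(1-2H)q>-1$, i.e. $q<1/(2H-1)$; and the $|t-s|^{2H-2}$ term forces $(2H-2)q>-1$, i.e. $q<1/(2-2H)$. All of these are implied by $1\le q<\min\{1/(2H-1),1/(2-2H)\}$, so the dominating function lies in $L^q(\Delta_T)$ throughout the stated range.

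The argument is routine once the expansion is in place; I expect the only genuine point of care to be the \emph{matching of each cross term to the correct bound in \eqref{Exf5}}. The delicate one is $J_3$: using the crude bound $\varphi\lesssim s^H|t-s|^H$ there would leave a non-integrable $s$-singularity, whereas the sharper $\varphi\lesssim |t-s|\,t^{2H-1}$ trades the factor $|t-s|^{-1}$ against $|t-s|$ and produces exactly the integrable exponent $s^{1-2H}$ after the harmless positive power $t^{2H-1}$ is discarded via $t\le T$. Keeping the $s$- versus $|t-s|$-bookkeeping straight, rather than any new estimate, is where essentially all the work lies.
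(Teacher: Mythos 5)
Your proof is correct and follows essentially the same route as the paper: the paper's own argument expands $(\Theta_{s,t}\eta_{11}(\epsilon,s,t))(\Theta_{s,t}\eta_{21}(\delta,s,t))s^{2H}/(\epsilon\delta\Theta_{s,t}^2)$ into exactly your four summands (your $J_2,J_3$ are the paper's $I_3,I_2$), controls them with the same three inputs (Lemma \ref{growth}, Lemma \ref{ddeltaLEMMA}, Lemma \ref{fhlemma}), and matches the same two bounds from \eqref{Exf5} to the same terms, arriving at precisely the right-hand side of \eqref{T2}.

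One correction to your closing remark, though it does not affect the validity of the proof: using the crude bound $\varphi(s,t)\lesssim s^H|t-s|^H$ in $J_3$ does \emph{not} produce a non-integrable $s$-singularity. It yields $s^{1-2H}\cdot s^{H}=s^{1-H}$, a \emph{positive} power of $s$, and the resulting majorant $s^{1-H}|t-s|^{H-1}(t^{-1}+1)$ is still $q$-Lebesgue integrable over $\Delta_T$ for the whole stated range of $q$: by the substitution $s=tu$ one gets a beta integral times $\int_0^T t^{1-q}\,dt$, so only $q<\min\big\{2,\tfrac{1}{1-H}\big\}$ is needed, which is implied by $q<\min\big\{\tfrac{1}{2H-1},\tfrac{1}{2-2H}\big\}\le 2$. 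What the sharper bound $\varphi(s,t)\lesssim |t-s|\,t^{2H-1}$ actually buys in $J_3$ is that the output is dominated by the specific term $s^{1-2H}(t^{2H-2}+1)$ appearing in the statement \eqref{T2}; the crude bound would prove a variant of the lemma with a different (still integrable) majorant, which would suffice for Proposition \ref{Wpropo} but would not establish the inequality \eqref{T2} as literally stated.
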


\begin{proof}
Recall (apply Remark \ref{phi1rem}),
$$\frac{1}{\epsilon}\eta_{11}(\epsilon,s,t)\frac{1}{\delta}\eta_{21}(\delta,s,t) s^{2H} = $$
$$d(\delta,s,t)|t-s|^{2H} \frac{s^{2H}}{2}\vartheta_1\Big(\frac{\epsilon}{s}\Big)|t-s|^{2H}\frac{s^{2H}}{\epsilon \delta \Theta^2_{s,t}}$$

$$+d(\delta,s,t)|t-s|^{2H} \frac{|t-s|^{2H}}{2} \varphi(s,t) \vartheta_1 \Big( -\frac{\epsilon}{t-s}\Big)\frac{s^{2H}}{\epsilon \delta \Theta^2_{s,t}}$$
$$-\frac{|t-s|^{2H}}{2} \vartheta_1 \Big(\frac{\delta}{t-s}\Big) \varphi(s,t) \times \frac{s^{2H}}{2}\vartheta_1\Big(\frac{\epsilon}{s}\Big)|t-s|^{2H} \frac{s^{2H}}{\epsilon \delta \Theta^2_{s,t}}  $$
$$ - \frac{|t-s|^{2H}}{2} \vartheta_1 \Big(\frac{\delta}{t-s}\Big) \varphi(s,t) \times \frac{|t-s|^{2H}}{2} \varphi(s,t) \vartheta_1 \Big( -\frac{\epsilon}{t-s}\Big) \frac{s^{2H}}{\epsilon \delta \Theta^2_{s,t}}$$
$$=: I_1(\epsilon,\delta,s,t) + I_2(\epsilon,\delta,s,t) + I_3(\epsilon,\delta,s,t)+ I_4(\epsilon,\delta,s,t).$$



Lemmas \ref{growth} and \ref{ddeltaLEMMA} yield 

\begin{eqnarray*}
|I_1(\epsilon,\delta,s,t)|&\lesssim& \Big|\frac{d(\delta,s,t)}{\delta}\Big|s^{-1}\\
&\lesssim& t^{-1} +1, 
\end{eqnarray*}
and

\begin{eqnarray*}
|I_2(\epsilon,\delta,s,t)|&\lesssim& \Big|\frac{d(\delta,s,t)}{\delta}\Big|(t-s)^{-1}s^{-2H}\varphi(s,t)\\
&\lesssim& s^{1-2H}(t^{-1} +1) (t-s)^{-1}\varphi(s,t)\\
&\lesssim& s^{1-2H}(t^{-1} +1)t^{2H-1}\\
&\lesssim& s^{1-2H}(t^{2H-2}+1),
\end{eqnarray*}
for $(s,t)\in \Delta_T$. From Lemmas \ref{fhlemma}, \ref{growth} and (\ref{Exf5}), we also have

\begin{eqnarray*}
|I_3(\epsilon,\delta,s,t)|&\lesssim& | t-s|^{4H}\Big| \vartheta_1 \Big(\frac{\delta}{t-s}\Big) \vartheta_1 \Big(\frac{\epsilon}{s}\Big)\Big| \varphi(s,t) s^{4H}\frac{1}{\epsilon\delta \Theta^2_{s,t}}\\
&\lesssim& \frac{(t-s)^{4H-1} \varphi(s,t) s^{4H-1}}{(t-s)^{4H}s^{4H}}\\
&=& s^{-1}|t-s|^{-1}\varphi(s,t)\\
&\lesssim& s^{H-1}|t-s|^{H-1},
\end{eqnarray*}
and

$$|I_4(\epsilon,\delta,s,t)|\lesssim |t-s|^{2H-2},$$
for $(s,t) \in \Delta_T.$ This concludes the proof.
\end{proof}

\begin{lemma}\label{lemmaf3}
\begin{eqnarray}
\Big|\frac{1}{\epsilon} \eta_{11}(\epsilon,s,t)\frac{1}{\delta} \eta_{22}(\delta,s,t)s^H(t-s)^H\Big|&\lesssim_{H,T}& s^{H-1}|t-s|^{H-1} + \label{i11i22}|t-s|^{2H-2}\\
\nonumber&+& (t^{-1}+1) + |t-s|^{H-1}(1+ t^{-H}),
\end{eqnarray}
and

\begin{eqnarray}
\Big|\frac{1}{\epsilon} \eta_{12}(\epsilon,s,t)\frac{1}{\delta} \eta_{21}(\delta,s,t)s^H(t-s)^H\Big|&\lesssim_{H,T}& s^{H-1}|t-s|^{H-1} + \label{i12i21}|t-s|^{2H-2}\\
\nonumber&+& (t^{-1}+1) + |t-s|^{H-1}(1+ t^{-H}),
\end{eqnarray}
for every $(\epsilon,\delta) \in (0,1)^2$ and $(s,t) \in \Delta_T$. Moreover, the right-hand side of (\ref{i11i22}) and (\ref{i12i21}) are $q$-Lebesgue integrable over $\Delta_T$ for $1\le q < \min \Big\{ \frac{1}{2-2H};2 \Big\}$.
\end{lemma}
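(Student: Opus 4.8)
The plan is to follow the structure of the proofs of Lemmas \ref{lemmaf1} and \ref{lemmaf2} \emph{mutatis mutandis}, the only change being that here we estimate the two \emph{mixed} products $\frac{1}{\epsilon}\eta_{11}\frac{1}{\delta}\eta_{22}$ and $\frac{1}{\epsilon}\eta_{12}\frac{1}{\delta}\eta_{21}$ (those multiplying the cross term $B^{(i)}_sB^{(j)}_{s,t}$), weighted by $s^H(t-s)^H$. First I would use Remark \ref{phi1rem} together with the identities (\ref{eta11})--(\ref{eta22}) to write $\Theta_{s,t}\eta_{11}(\epsilon,s,t)$ and $\Theta_{s,t}\eta_{12}(\epsilon,s,t)$ in terms of $\vartheta_1(\epsilon/s)$ and $\vartheta_1(-\epsilon/(t-s))$, and $\Theta_{s,t}\eta_{21}(\delta,s,t)$, $\Theta_{s,t}\eta_{22}(\delta,s,t)$ in terms of $d(\delta,s,t)$ and $\vartheta_1(\delta/(t-s))$. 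Expanding each product of two binomials yields four summands $I_1+I_2+I_3+I_4$ carrying a common factor $\tfrac{1}{\epsilon\delta\,\Theta^2_{s,t}}$, exactly as before.

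Next I would bound each $I_k$ with the same four tools used in Lemmas \ref{lemmaf1} and \ref{lemmaf2}: Lemma \ref{growth} to pass from $\tfrac1\epsilon|\vartheta_1(\epsilon/s)|$ to $4H/s$ and from $\tfrac1\delta|\vartheta_1(\delta/(t-s))|$ to $4H/|t-s|$; Lemma \ref{ddeltaLEMMA} to bound $\tfrac1\delta|d(\delta,s,t)|\lesssim s(t^{-1}+1)$; the determinant estimate (\ref{deterest}), which on $\Delta_T$ (where $s<t$) gives $\Theta^{-2}_{s,t}\lesssim_H|t-s|^{-4H}s^{-4H}$; and the three bounds (\ref{Exf5}) for $\varphi$. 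The term with two factors $\vartheta_1$ and no $\varphi$ collapses to $s^{H-1}|t-s|^{H-1}$; the term with one $\varphi$ and two $\vartheta_1$ factors gives $|t-s|^{2H-2}$; the term carrying $d(\delta,s,t)$ but only the weight (no $\varphi$) gives $(t^{-1}+1)$. The decisive term is the one carrying $d(\delta,s,t)$ together with one $\varphi$: using $\varphi\lesssim s^H|t-s|^H$ it reduces to $s^{1-H}(t^{-1}+1)|t-s|^{H-1}$, and here I would exploit $s<t$ on $\Delta_T$ to write $s^{1-H}t^{-1}\le t^{-H}$, turning this into $|t-s|^{H-1}(1+t^{-H})$. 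This is precisely where the mixed products behave better than the diagonal ones of Lemmas \ref{lemmaf1}, \ref{lemmaf2}: the weight $s^H$ (instead of $s^{2H}$) leaves a \emph{positive} residual power of $s$, so that $s<t$ downgrades the $t^{-1}$ singularity to $t^{-H}$ and no $s^{1-2H}$ term ever appears. Summing the four contributions yields (\ref{i11i22}), and the identical bookkeeping for $\eta_{12}\eta_{21}$ yields (\ref{i12i21}); all bounds are uniform in $(\epsilon,\delta)\in(0,1)^2$.

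Finally I would verify $q$-integrability of the right-hand sides over $\Delta_T$ term by term via elementary Beta-type integrals: $|t-s|^{2H-2}$ needs $q<\tfrac{1}{2-2H}$; $(t^{-1}+1)$ and the $t^{-H}$ part of $|t-s|^{H-1}(1+t^{-H})$ each need $q<2$, since $\int_0^T\!\int_0^t t^{-q}\,ds\,dt=\int_0^T t^{1-q}\,dt$ and $\int_0^T\!\int_0^t|t-s|^{(H-1)q}t^{-Hq}\,ds\,dt\propto\int_0^T t^{1-q}\,dt$; and $s^{H-1}|t-s|^{H-1}$ is integrable for the milder range $q<\tfrac{1}{1-H}$. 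The binding constraints are thus $q<\tfrac{1}{2-2H}$ and $q<2$, i.e. the stated range $1\le q<\min\{\tfrac{1}{2-2H};2\}$. I expect the main obstacle to be neither any single estimate nor the integrability count, but the careful orchestration of the eight terms per product together with the correct choice among the three $\varphi$-bounds in (\ref{Exf5}), so that every term lands on one of the four admissible summands — in particular so that the would-be $s^{1-2H}$ singularity is replaced by the integrable $t^{-H}$ one, which is exactly what upgrades the exponent range from $\min\{\tfrac{1}{2H-1};\tfrac{1}{2-2H}\}$ to $\min\{\tfrac{1}{2-2H};2\}$.
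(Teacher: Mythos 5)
Your proposal reproduces the paper's own proof in all essentials: the same expansion of each mixed product into four summands via Remark \ref{phi1rem} and the representations \eqref{eta11}--\eqref{eta22}, the same four tools (Lemma \ref{growth} for the $\vartheta_1$ factors, Lemma \ref{ddeltaLEMMA} for $d(\delta,s,t)$, Lemma \ref{fhlemma} with $s<t$ for $\Theta_{s,t}^{-2}$, and \eqref{Exf5} for $\varphi$), the same key inequality $s^{1-H}t^{-1}\le t^{-H}$ on $\Delta_T$, and the same integrability count giving $q<\min\big\{\tfrac{1}{2-2H};2\big\}$; your structural explanation of why no $s^{1-2H}$ singularity appears here is also the right one.

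One bookkeeping slip: your attribution of bounds to terms is swapped relative to what the computation actually yields. The summands carrying $d(\delta,s,t)$ together with \emph{one} factor $\varphi$ (the paper's $I_1$ and $J_1$) obey $\big|\tfrac{d}{\delta}\big|\varphi\, s^{-H-1}|t-s|^{-H}\lesssim s(t^{-1}+1)\,s^H|t-s|^H s^{-H-1}|t-s|^{-H}=(t^{-1}+1)$, so they are not the decisive ones. The decisive summands are the one with $d$ and \emph{no} $\varphi$ (the paper's $J_3$, which occurs only in the $\eta_{12}\eta_{21}$ product) and the one with $d$ and $\varphi^2$ (the paper's $I_2$, occurring only in the $\eta_{11}\eta_{22}$ product); both reduce to $s^{1-H}(t^{-1}+1)|t-s|^{H-1}$ and require $s^{1-H}t^{-1}\le t^{-H}$. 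In particular, your literal claim that the $d$-without-$\varphi$ term is bounded by $(t^{-1}+1)$ would fail, since that term blows up like $|t-s|^{H-1}$ as $s\uparrow t$. The slip is harmless for the lemma itself, because both $(t^{-1}+1)$ and $|t-s|^{H-1}(1+t^{-H})$ appear on the right-hand sides of \eqref{i11i22} and \eqref{i12i21}, so executing your plan term by term (with the attributions corrected) lands every summand on an admissible bound, exactly as in the paper. (Minor: each product has four summands, eight in total, not ``eight per product''.)
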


\begin{proof}
By definition,

$$\frac{1}{\epsilon} \eta_{11}(\epsilon,s,t)\frac{1}{\delta} \eta_{22}(\delta,s,t) \times s^{H}(t-s)^{H} = $$
$$-\frac{\varphi(s,t)}{\Theta^2_{s,t}\epsilon\delta}d(\delta,s,t)\frac{s^{2H}}{2}\vartheta_1 \Big( \frac{\epsilon}{s}\Big)(t-s)^{2H}\times s^{H}(t-s)^{H}$$

$$+ \frac{\varphi(s,t)^2}{\Theta^2_{s,t}\epsilon\delta}d(\delta,s,t) \frac{(t-s)^{2H}}{2}\vartheta_1 \Big( \frac{-\epsilon}{t-s}\Big)\times s^{H}(t-s)^{H}$$

$$+ \frac{s^{2H}(t-s)^{2H}}{2\epsilon\delta \Theta^2_{s,t}}\vartheta_1 \Big( \frac{\delta}{t-s} \Big)\frac{s^{2H}}{2}\vartheta_1 \Big( \frac{\epsilon}{s} \Big)(t-s)^{2H}\times s^{H}(t-s)^{H}$$

$$+ \frac{s^{2H}(t-s)^{2H}}{2\epsilon\delta \Theta^2_{s,t}}\vartheta_1 \Big( \frac{\delta}{t-s} \Big)\frac{(t-s)^{2H}}{2} \vartheta_1 \Big( \frac{-\epsilon}{t-s} \Big) \varphi(s,t)\times s^{H}(t-s)^{H}$$
$$ =: I_1(\epsilon,\delta,s,t) + I_2(\epsilon,\delta,s,t) + I_3(\epsilon,\delta,s,t) + I_4(\epsilon,\delta,s,t).$$

Lemmas \ref{growth}, \ref{fhlemma} and (\ref{Exf5}) yield 

$$|I_3(\epsilon,\delta,s,t)|\lesssim s^{H-1} |t-s|^{H-1}$$
and 

\begin{eqnarray*}
|I_4(\epsilon,\delta,s,t)|&\lesssim& s^{-H}|t-s|^{H-2}\varphi(s,t)\\
&\lesssim& |t-s|^{2H-2},
\end{eqnarray*}
for $(s,t) \in \Delta_T$.  Lemmas \ref{growth}, \ref{fhlemma}, \ref{ddeltaLEMMA} and (\ref{Exf5}) yield

\begin{eqnarray*}
|I_1(\epsilon,\delta,s,t)|&\lesssim& \Big|\frac{d(\delta,s,t)}{\delta}\Big| \varphi(s,t) s^{-H-1}|t-s|^{-H}\\
&\lesssim& s^{-H}|t-s|^{-H} \varphi(s,t)(t^{-1} +1)\\
&\lesssim& (t^{-1}+1),
\end{eqnarray*}
and
\begin{eqnarray*}
|I_2(\epsilon,\delta,s,t)|&\lesssim& \Big|\frac{d(\delta,s,t)}{\delta}\Big| \varphi^2(s,t) s^{-3H}|t-s|^{-H-1}\\
&\lesssim& s^{1-3H}|t-s|^{-H-1} \varphi^2(s,t)(t^{-1} +1)\\
&\lesssim& s^{1-H} |t-s|^{H-1} (t^{-1}+1)\\
&\lesssim& t^{-H}|t-s|^{H-1} + |t-s|^{H-1}, 
\end{eqnarray*}
for $(s,t) \in \Delta_T$. 

Next, we evaluate

$$\frac{1}{\epsilon} \eta_{12}(\epsilon,s,t)\frac{1}{\delta} \eta_{21}(\delta,s,t) \times s^{H}(t-s)^{H} = $$
$$-\frac{1}{2\Theta^2_{s,t}\epsilon\delta} s^{2H} \vartheta_1\Big(\frac{\epsilon}{s}\Big)\varphi(s,t) d(\delta,s,t)|t-s|^{2H}\times s^{H}(t-s)^{H}$$
$$+\frac{1}{2\Theta^2_{s,t}\epsilon\delta} s^{2H} \vartheta_1\Big(\frac{\epsilon}{s}\Big)\varphi(s,t)\frac{1}{2}|t-s|^{2H}\vartheta_1\Big(\frac{\delta}{t-s}\Big)\varphi(s,t)\times s^{H}(t-s)^{H}  $$

$$-\frac{1}{2\Theta^2_{s,t}\epsilon\delta} |t-s|^{2H} s^{2H} \vartheta_1\Big(\frac{-\epsilon}{t-s}\Big) d(\delta,s,t)|t-s|^{2H}\times s^{H}(t-s)^{H}$$

$$+\frac{1}{4\Theta^2_{s,t}\epsilon\delta} |t-s|^{2H} s^{2H} \vartheta_1\Big(\frac{-\epsilon}{t-s}\Big) |t-s|^{2H} \vartheta_1\Big(\frac{\delta}{t-s}\Big) \varphi(s,t)  \times s^{H}(t-s)^{H}$$

$$ =: J_1(\epsilon,\delta,s,t) + J_2(\epsilon,\delta,s,t) + J_3(\epsilon,\delta,s,t) + J_4(\epsilon,\delta,s,t),$$
for $(s,t) \in \Delta_T$. Lemmas \ref{fhlemma}, \ref{growth} and (\ref{Exf5}) yield 

\begin{eqnarray*}
|J_2(\epsilon,\delta,s,t)|&\lesssim& \varphi^2(s,t)s^{-H-1}|t-s|^{-H-1}\\
&\lesssim& s^{H-1}|t-s|^{H-1}
\end{eqnarray*}
and 

\begin{eqnarray*}
|J_4(\epsilon,\delta,s,t)|&\lesssim& \varphi(s,t) s^{-H}|t-s|^{H-2}\\
&\lesssim& |t-s|^{2H-2},
\end{eqnarray*}
for $(s,t) \in \Delta_T$. Lemmas \ref{fhlemma}, \ref{growth}, \ref{ddeltaLEMMA} and (\ref{Exf5}) yield 

\begin{eqnarray*}
|J_1(\epsilon,\delta,s,t)|&\lesssim& \Big|\frac{d(\delta,s,t)}{\delta}\Big|  \varphi(s,t) s^{-H-1}|t-s|^{-H}\\
&\lesssim& s^{-H}|t-s|^{-H}\varphi(s,t)(t^{-1} + 1)\\
&\lesssim& t^{-1} + 1,
\end{eqnarray*}
and 

\begin{eqnarray*}
|J_3(\epsilon,\delta,s,t)|&\lesssim& \Big|\frac{d(\delta,s,t)}{\delta}\Big| s^{-H}|t-s|^{H-1}\\
&\lesssim& s^{1-H}|t-s|^{H-1} (t^{-1} + 1)\\
&\lesssim& |t-s|^{H-1} (1 + t^{-H}),
\end{eqnarray*}
for $(s,t) \in \Delta_T$. Now, observe that $\frac{1}{2-2H} < \frac{1}{1-H}$ and hence the right-hand side of (\ref{i11i22}) and (\ref{i12i21}) are $q$-Lebesgue integrable over $\Delta_T$ for $1\le q < \min \Big\{ \frac{1}{2-2H};2 \Big\}$. 
\end{proof}

\textbf{Proof of Proposition \ref{Wpropo}:} Observe $\frac{1}{2H-1} < 2 < \frac{1}{2-2H}$ for $\frac{3}{4} < H<1$ and $\frac{1}{2-2H} \le 2 \le \frac{1}{2H-1}$ for $\frac{1}{2} < H \le \frac{3}{4}$. Summing up Lemmas \ref{Lambdalimit}, \ref{lemmaf1}, \ref{lemmaf2} and \ref{lemmaf3}, we then conclude that

\begin{equation}
\lim_{(\epsilon,\delta)\downarrow 0}\mathbb{E}\int_{\Delta_T}|\mathcal{W}^-(\epsilon,\delta;s,t) - \mathcal{W}(s,t)|^qdsdt=0,
\end{equation}
for $1\le q < \min \Big\{ \frac{1}{2H-1}; \frac{1}{2-2H}\Big\}$. By representing $\Lambda^-(\epsilon,\delta; s,t)$ as in (\ref{side2}) on the other simplex $\Delta_T^c=\{(s,t)\in [0,T]^2_\star;0 < t < s \le T\}$ and using (\ref{INTREP2}), we conclude the proof of Proposition \ref{Wpropo}.

\

\textbf{Proof of Theorem \ref{mainTH1}:} Apply Propositions \ref{deterProp} and \ref{Wpropo}.

\

\textbf{Proof of Theorem \ref{mainTH2}:} Let us assume that $Y^{\otimes 2} \in L^p(\Omega\times [0,T]^2; \mathbb{R}^{d\times d})$ for $\max \Big\{ \frac{1}{2H-1}; \frac{1}{2-2H} \Big\} < p \le \infty$. Then, for the conjugate exponent $q$ of $p$, we have $1\le q < \min \Big\{ \frac{1}{2H-1}; \frac{1}{2-2H}\Big\}$.
We recall that $\Lambda$ was defined in  \eqref{limIN}.
By using (\ref{fplit}) and applying Cauchy-Schwarz, H\"older's inequality and Theorem \ref{mainTH1}, we have

$$\Bigg|\Big \langle I^-(\epsilon,Y,dB), I^-(\delta,Y,dB) \Big\rangle_{L^2(\mathbb{P})}  - \mathbb{E}\int_{[0,T]^2 }  \big\langle Y_{s}\otimes Y_{t} , \Lambda (s,t) \big\rangle dsdt\Bigg|
$$
$$ = \Bigg|\mathbb{E}\int_{[0,T]^2}  \big\langle Y_{s}\otimes Y_{t} , \Lambda^-(\epsilon,\delta;s,t) - \Lambda (s,t) \big\rangle dsdt\Bigg|$$
$$\le \mathbb{E}\int_{[0,T]^2 }  \Big|\big\langle Y_{s}\otimes Y_{t} , \Lambda^-(\epsilon,\delta;s,t) - \Lambda (s,t) \big\rangle \Big|dsdt $$
$$\le \mathbb{E}\int_{[0,T]^2 }  \big| Y_{s}\otimes Y_{t}\big| \big|\Lambda^-(\epsilon,\delta;s,t) - \Lambda (s,t) \big|dsdt $$
$$\le\Bigg(\int_{[0,T]^2}  \mathbb{E}\big| Y_{s}\otimes Y_{t}\big|^pdsdt\Bigg)^{\frac{1}{p}} \Bigg(\int_{[0,T]^2}\mathbb{E}\big|\Lambda^-(\epsilon,\delta;s,t) - \Lambda (s,t) \big|^qdsdt\Bigg)^{\frac{1}{q}} \rightarrow 0,$$
as $(\epsilon,\delta)\downarrow 0$. This concludes the proof.

\section{Proof of Corollary \ref{cormainTH2}}
This section is devoted to the proof of Corollary \ref{cormainTH2}. At first, we need the following $L^p$-estimate for $\Lambda$ defined in  \eqref{limIN}.  

\begin{proposition}\label{West}
Fix $q>1$, $v \in (0,T]$ and the FBM of dimension $d$. Then,  
$$\| \Lambda(s,t)\|_q\lesssim_{H,d,q} |t-s|^{2H-2} + (s\wedge t)^{H-1}|t-s|^{H-1},$$
for $(s,t) \in [0,v]^2$ with $s\neq t$ and $s\wedge t>0$.  
\end{proposition}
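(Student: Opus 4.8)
The plan is to use the decomposition $\Lambda=\mathcal{W}+\frac{\partial^2R}{\partial t\partial s}I_{d\times d}$ from \eqref{limIN} and to estimate the two pieces separately. The deterministic matrix is harmless: for $s\neq t$ one has $\frac{\partial^2R}{\partial t\partial s}(s,t)=H(2H-1)|t-s|^{2H-2}$, which is non-random, so its $L^q$-norm equals itself and produces exactly the first term $|t-s|^{2H-2}$. Everything therefore reduces to bounding $\|\mathcal{W}^{ij}(s,t)\|_q$ for each $1\le i,j\le d$, and summing the $d^2$ entries produces the dependence on $d$.

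For the random part I would work on $\Delta_T$ (that is $0<s<t$) and use the increment representation \eqref{INTREP1}: $\mathcal{W}^{ij}(s,t)=GH-\mathbb{E}[GH]$ with the centered, jointly Gaussian variables $G=\eta_{11}(s,t)B^{(i)}_s+\eta_{12}(s,t)B^{(i)}_{s,t}$ and $H=\eta_{21}(s,t)B^{(j)}_s+\eta_{22}(s,t)B^{(j)}_{s,t}$, where $\eta_{k\ell}(s,t)=\lim_{\epsilon\downarrow0}\frac1\epsilon\eta_{k\ell}(\epsilon,s,t)$ are the limiting regression coefficients. The elementary Gaussian moment bound $\|GH-\mathbb{E}[GH]\|_q\lesssim_q\|G\|_2\|H\|_2$ (valid for every $q>1$), together with $\mathbb{E}|B^{(i)}_s|^2=s^{2H}$ and $\mathbb{E}|B^{(i)}_{s,t}|^2=|t-s|^{2H}$, then reduces matters to controlling the four coefficient products
\[
|\eta_{11}\eta_{21}|\,s^{2H},\quad |\eta_{11}\eta_{22}|\,s^{H}|t-s|^{H},\quad |\eta_{12}\eta_{21}|\,s^{H}|t-s|^{H},\quad |\eta_{12}\eta_{22}|\,|t-s|^{2H}.
\]

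The core of the argument is to obtain \emph{sharp} pointwise bounds on these products. Letting $\epsilon,\delta\downarrow0$ in \eqref{eta11}--\eqref{eta22} (using $\frac1\epsilon\vartheta_1(\epsilon/s)\to 2H/s$ and $\frac1\epsilon\vartheta_1(-\epsilon/(t-s))\to -2H/(t-s)$) yields closed forms such as
\[
\Theta_{s,t}\,\eta_{11}(s,t)=H|t-s|^{2H-1}\big(s^{2H-1}(t-s)-\varphi(s,t)\big),
\]
and analogous expressions for $\eta_{12},\eta_{21},\eta_{22}$, in which the bracket is always a difference of the form ``smooth term minus $\varphi$''. The decisive point is that these brackets exhibit a cancellation: $s^{2H-1}(t-s)-\varphi(s,t)$ and $(t^{2H-1}-(t-s)^{2H-1})(t-s)-\varphi(s,t)$ are of strictly smaller order than either of their summands. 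Estimating them by a Taylor/mean-value expansion in the two regimes $t-s\le s$ and $t-s>s$ gives, for instance, $|\Theta_{s,t}\eta_{11}|\lesssim_H s^{2H-1}|t-s|^{2H}$ and $|\Theta_{s,t}\eta_{12}|\lesssim_H s^{2H}|t-s|^{2H-1}$. Inserting the determinant identity $\Theta_{s,t}=|t-s|^{2H}A(s,t)$ with $A(s,t)\gtrsim_H (s\wedge t)^{2H}$ from Lemma \ref{fhlemma}, the two ``diagonal'' products $|\eta_{11}\eta_{21}|s^{2H}$ and $|\eta_{12}\eta_{22}||t-s|^{2H}$ are each $\lesssim_H|t-s|^{2H-2}$, while the two ``off-diagonal'' products $|\eta_{11}\eta_{22}|s^H|t-s|^H$ and $|\eta_{12}\eta_{21}|s^H|t-s|^H$ are each $\lesssim_H (s\wedge t)^{H-1}|t-s|^{H-1}$, which is the desired estimate on $\Delta_T$. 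The region $0<t<s$ is identical after the reflection $(s,t)\mapsto(t,s)$, which exchanges $\lambda_{11}\leftrightarrow\lambda_{22}$ and $\lambda_{12}\leftrightarrow\lambda_{21}$ and turns $s$ into $s\wedge t$.

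The hard part is precisely this last sharpening. The bounds proved en route to Theorem \ref{mainTH1} (Lemmas \ref{lemmaf1}--\ref{lemmaf3}) are too lossy here: being obtained by the triangle inequality, they discard the cancellation and absorb lower-order contributions into $T$-dependent constants, producing spurious terms such as $t^{-H}|t-s|^{H-1}$ which are \emph{not} dominated by $|t-s|^{2H-2}+(s\wedge t)^{H-1}|t-s|^{H-1}$ and which would, moreover, force a forbidden dependence on $T$ in the constant (the target bound is $\lesssim_{H,d,q}$). One must therefore retain the explicit difference structure of $\Theta_{s,t}\eta_{k\ell}$ and carry out the regime-dependent elementary estimates on $s^{2H-1}(t-s)-\varphi$ and $(t^{2H-1}-(t-s)^{2H-1})(t-s)-\varphi$ directly; this is the step where the genuine work lies.
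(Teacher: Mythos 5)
Your overall architecture coincides with the paper's: split $\Lambda=\mathcal{W}+\frac{\partial^2 R}{\partial t\partial s}I_{d\times d}$, reduce the random part to the four coefficient products via second-chaos norm equivalence (your inequality $\|GH-\mathbb{E}[GH]\|_q\lesssim_q\|G\|_2\|H\|_2$ is exactly the paper's ``equivalence of $L^r(\mathbb{P})$-norms on the second Wiener chaos''), pass to the limiting regression coefficients $\eta_{k\ell}$, and divide by $\Theta_{s,t}=|t-s|^{2H}A(s,t)$ with $A(s,t)\gtrsim_H s^{2H}$ on $\Delta_T$ (Lemma \ref{fhlemma}). But your central technical claim --- that the brackets $s^{2H-1}(t-s)-\varphi(s,t)$ and $\big(t^{2H-1}-(t-s)^{2H-1}\big)(t-s)-\varphi(s,t)$ exhibit a cancellation, being ``of strictly smaller order than either of their summands'', and that the triangle inequality is therefore too lossy --- is false. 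For $t-s\ll s$ one has $\varphi(s,t)= H s^{2H-1}(t-s)(1+o(1))$, hence $s^{2H-1}(t-s)-\varphi(s,t)=(1-H)s^{2H-1}(t-s)(1+o(1))$: the difference is of exactly the same order as its summands, and nothing is gained over the triangle inequality. The paper's proof of Proposition \ref{West} is in fact a pure triangle-inequality argument; the sharpness comes from choosing, term by term, the right member of the menu of \emph{homogeneous} bounds $\varphi\le s^H|t-s|^H$ (Cauchy--Schwarz), $\varphi\lesssim_H s^{2H-1}|t-s|$ (see (\ref{fn1})), $\varphi/(s|t-s|)\lesssim_H t^{2H-2}$ (see (\ref{fn2})), together with the mean-value bound (\ref{deri}). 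Homogeneity of degree $2H$ is also what keeps every constant depending on $H$ alone; this, not cancellation, disposes of the $T$-dependence issue. Your criticism that Lemmas \ref{lemmaf1}--\ref{lemmaf3} cannot be recycled is correct, but it targets a proof the paper does not give: the paper redoes the estimates from scratch.

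More importantly, the uniform per-factor bounds you state do not close the argument. Your bound $|\Theta_{s,t}\eta_{12}|\lesssim_H s^{2H}|t-s|^{2H-1}$ and its natural analogue obtained from (\ref{deri}), namely $|\Theta_{s,t}\eta_{21}|\lesssim_H s|t-s|^{4H-2}$, give, after dividing by $\Theta^2_{s,t}\gtrsim_H s^{4H}|t-s|^{4H}$,
$$|\eta_{12}(s,t)\eta_{21}(s,t)|\,s^H|t-s|^H\;\lesssim_H\; s^{1-H}|t-s|^{3H-3},$$
which is \emph{not} dominated by $|t-s|^{2H-2}+s^{H-1}|t-s|^{H-1}$: the ratios to these two terms are $(s/|t-s|)^{1-H}$ and $(s/|t-s|)^{2-2H}$ respectively, both unbounded as $|t-s|/s\to 0$. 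The repair is to abandon the mean-value bound in the regime $|t-s|\le s$ and use instead $t^{2H-1}-(t-s)^{2H-1}\le t^{2H-1}\le 2^{2H-1}s^{2H-1}$ together with $\varphi\lesssim_H s^{2H-1}|t-s|$, which yields $|\Theta_{s,t}\eta_{21}|\lesssim_H s^{2H-1}|t-s|^{2H}$ there and hence the product bound $s^{H-1}|t-s|^{H-1}$; in the complementary regime $s\le|t-s|$ your bound suffices since $(s/|t-s|)^{1-H}\le 1$. You do gesture at ``regime-dependent estimates'', but you defer exactly this step as ``where the genuine work lies''; since that step is the entire content of the proposition, the proposal as written has a gap at its core, and its guiding principle (retaining cancellations) points away from the correct repair rather than toward it.
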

\begin{proof}
Fix $v \in (0,T]$, $q>1$. By definition,

$$|\Lambda(s,t)|^q\lesssim_{d,q,H} |t-s|^{q(2H-2)}+ |\mathcal{W}(s,t)|^q,$$
for $s\neq t$ and $s\wedge t>0$. By the equivalence of the $L^r(\mathbb{P})$-norms on the second-Wiener chaos, it is sufficient to estimate $\| \mathcal{W}(s,t)\|_2$. By definition (\ref{Wproc}) (see (\ref{INTREP1}) and (\ref{INTREP2})), we can write

\begin{eqnarray*}
\mathcal{W}^{ij}(s,t)&=& \eta_{11}(s,t)\eta_{21}(s,t)\big(B^{(i)}_sB^{(j)}_s-s^{2H}\delta_{ij}\big) \\
\nonumber&+& \eta_{11}(s,t)\eta_{22}(s,t) \big( B^{(i)}_s B^{(j)}_{s,t} - \varphi(s,t)\delta_{ij} \big)\\
\nonumber&+& \eta_{12}(s,t) \eta_{21}(s,t) \big( B^{(j)}_s B^{(i)}_{s,t} - \varphi(s,t)\delta_{ij} \big)\\
\nonumber&+& \eta_{12}(s,t)\eta_{22}(s,t)\big(B^{(i)}_{s,t} B^{(j)}_{s,t} - |t-s|^{2H}\delta_{ij} \big),
\end{eqnarray*}
for $1\le i,j\le d$ for $(s,t) \in \Delta_T$. Here, 

$$\eta_{11}(s,t) =   \Big\{ s^{2H-1}|t-s|^{2H} - \varphi(s,t) |t-s|^{2H-1}\Big\}\frac{H}{\Theta_{s,t}}$$
$$\eta_{21}(s,t) =  \Big\{ \Big[t^{2H-1} - |t-s|^{2H-1}\Big]|t-s|^{2H} - \varphi(s,t)|t-s|^{2H-1}\Big\}\frac{H}{\Theta_{s,t}}$$
$$\eta_{12}(s,t) =  \Big\{ s^{2H}|t-s|^{2H-1} - s^{2H-1}\varphi(s,t)  \Big\}\frac{H}{\Theta_{s,t}}$$
$$\eta_{22}(s,t) =  \Big\{ s^{2H} |t-s|^{2H-1} - \varphi(s,t)\Big[t^{2H-1} - |t-s|^{2H-1}\Big]   \Big\}\frac{H}{\Theta_{s,t}},$$
for $0 < s < t\le u\le T$. By symmetry, H\"older's inequality and using the Gaussian property of $B$, it is sufficient to estimate 
$$
|\eta_{11}(s,t)\eta_{21}(s,t)|s^{2H} + |\eta_{12}(s,t)\eta_{22}(s,t)||t-s|^{2H}$$ 
$$+ \big|\eta_{11}(s,t) \eta_{22}(s,t) + \eta_{12}(s,t) \eta_{21}(s,t) \big| s^{H}|t-s|^H$$
for $0 < s < t \le u \le T$. Mean value theorem yields

\begin{equation}\label{deri}
t^{2H-1} -|t-s|^{2H-1}\lesssim_H s |t-s|^{2H-2},
\end{equation}
for $0 < s < t \le u \le T$. By using Lemma \ref{fhlemma}, we observe 

\begin{eqnarray*}
|\eta_{12}(s,t)\eta_{22}(s,t)||t-s|^{2H}&\lesssim& |t-s|^{2H-2} + \varphi(s,t) |t^{2H-1} - |t-s|^{2H-1}|\times s^{-2H}|t-s|^{-1}\\
&+&  s^{-1}|t-s|^{-1}\varphi(s,t)\\
&+& s^{-2H-1}\varphi^2(s,t) |t^{2H-1} - |t-s|^{2H-1}| |t-s|^{-2H},
\end{eqnarray*}
$0 < s < t \le u \le T$. By using (\ref{Exf5}), we observe 

\begin{equation}\label{fn1}
\sup_{0 < s< t\le u} \frac{\varphi(s,t)}{s^{2H-1}|t-s|} < \infty.
\end{equation}
The estimates (\ref{deri}) and (\ref{fn1}) yield 
 
\begin{eqnarray*} 
\varphi(s,t) |t^{2H-1} - |t-s|^{2H-1}|\times s^{-2H}|t-s|^{-1}&\lesssim& |t-s|^{2H-2}\frac{\varphi(s,t)}{s^{2H-1}|t-s|}\\
&\lesssim_H& |t-s|^{2H-2}, 
\end{eqnarray*}
for $0 < s < t \le u \le T$. By using (\ref{Exf5}), we observe 

\begin{eqnarray}\label{fn2}
\frac{\varphi(s,t)}{s|t-s|}&\lesssim& t^{2H-2}\\
\nonumber&=& |t-s|^{2H-2} t^{2H-2} |t-s|^{2-2H}\\
\nonumber&\lesssim& |t-s|^{2H-2},
\end{eqnarray}
$0 < s < t \le u \le T$. By using (\ref{fn2}), we observe 

\begin{eqnarray*}
s^{-2H-1}\varphi^2(s,t) |t^{2H-1} - |t-s|^{2H-1}| |t-s|^{-2H}&\le& s^{-2}\varphi^2(s,t)|t-s|^{-2}|t-s|^{2-2H}\\
&\lesssim& t^{2(2H-2)}|t-s|^{2-2H}\\
&\le&t^{2H-2}\\
&\lesssim& |t-s|^{2H-2}, 
\end{eqnarray*}
for $0 < s < t \le u \le T$. Similarly, one can easily check 

$$|\eta_{11}(s,t)\eta_{21}(s,t)s^{2H}|\lesssim |t-s|^{2H-2},$$
for $0 < s < t \le u \le T$. By using Lemma \ref{fhlemma}, (\ref{deri}) and (\ref{Exf5}), we observe 

\begin{eqnarray*}
|\eta_{11}(s,t)\eta_{22}(s,t)s^{H}|t-s|^H|&\lesssim& s^{H-1}|t-s|^{H-1} + s^{-H} \varphi(s,t)|t-s|^{H-2}\\
&+&s^{-H-1}|t-s|^{-H}\varphi(s,t) | t^{2H-1} - (t-s)^{2H-1}|\\
&+& s^{-3H}\varphi^2(s,t)| t^{2H-1} - (t-s)^{2H-1}| |t-s|^{-H-1}\\
&\lesssim& s^{H-1}|t-s|^{H-1} + |t-s|^{2H-2},
\end{eqnarray*}  
for $0 < s < t \le u \le T$. Similarly, 

$$
|\eta_{12}(s,t)\eta_{21}(s,t)s^{H}|t-s|^H|\lesssim s^{H-1}|t-s|^{H-1} + |t-s|^{2H-2},
$$
for $0 < s < t \le u \le T$. Summing up all the above estimates, we get 
$$\| \mathcal{W}(s,t)\|_{q} \lesssim_{H,d,q}s^{H-1}|t-s|^{H-1} + |t-s|^{2H-2},$$
for $0 < s < t \le u \le T$. This concludes the proof. 
\end{proof}

Let us consider an integrand  $Y_\cdot = g(\cdot, B_\cdot) \in L^p(\Omega\times [0,T]^2; \mathbb{R}^{d\times d})$ with exponent $\max \{ (2H-1)^{-1}; (2-2H)^{-1}\} < p \le \infty$ and let $q$ be the conjugate exponent of $p$ satisfying $1\le q < \min \{ (2H-1)^{-1}; (2-2H)^{-1}\}$. Since our setup is translation invariant, we restrict ourselves to the case $u=0$ for the proof of Corollary \ref{cormainTH2} without any loss of generality. In this case, a direct application of Theorem \ref{mainTH2}, Cauchy-Schwarz and H\"older's inequalities yield

\begin{equation}\label{upperINT}
\Big\|\int_0^v Y_r d^-B_r\Big\|^2_2\le \| Y^{\otimes_2}\|_p\Bigg(\int_{[0,v]^2}\mathbb{E}|\Lambda(s,t)|^qdsdt\Bigg)^{\frac{1}{q}} 
\end{equation}
where
$$\Lambda(s,t) = H(2H-1)|t-s|^{2H-2} I_{d\times d} + \mathcal{W}(s,t);$$  
for $(s,t) \in [0,v]^2$ with $s\neq t$ and $s\wedge t>0$. Proposition \ref{West} yields 

\begin{eqnarray}
\nonumber\int_{[0,v]^2}\| \Lambda(s,t)\|^q_qdsdt&\lesssim_{H,q,d}& \int_{[0,v]^2}|t-s|^{q(2H-2)}dsdt + \int_{[0,v]^2}s^{q(H-1)}|t-s|^{q(H-1)}dsdt\\
\label{ffi}&\lesssim_{H,q,d}& v^{q(2H-2)+2} + v^{2q(H-1)+2}
\end{eqnarray}
and hence by applying (\ref{ffi}) into (\ref{upperINT}), we conclude the proof of Corollary \ref{cormainTH2}.  

\begin{remark}\label{FR}
From Proposition \ref{West} and the fact that $\frac{1}{1-H} > \frac{1}{2-2H}$ for $\frac{1}{2} < H < 1$, we observe $\Lambda \in L^q (\Omega\times [0,T]^2; \mathbb{R}^{d\times d})$ for $1\le q < \frac{1}{2-2H}$. However, we are not able to prove $L^q$-convergence of $\Lambda^{-}(\epsilon,\delta, \cdot)$ to $\Lambda$ for every $1\le q < \frac{1}{2-2H}$ in Theorem \ref{mainTH1}. As mentioned in the Introduction, Theorem \ref{mainTH2} allows us to construct a natural Hilbert space of processes $\mathcal{H}_R$ as the completion of processes of the form $Y_\cdot = g(\cdot, B_\cdot)$ w.r.t. semi-inner product 

$$(Y,Z)\mapsto \mathbb{E}\int_{[0,T]^2} \langle  Y_s\otimes Z_t, \Lambda(s,t)\rangle dsdt$$
and one can naturally extend the forward integral to $\mathcal{H}_R$. In this case, it is possible to prove that any $Y_\cdot = g(\cdot, B_\cdot) \in L^q (\Omega\times [0,T]^2; \mathbb{R}^{d\times d})$ for $1\le q < \frac{1}{2-2H}$ will be an element of $\mathcal{H}_R$. In this paper, we do not make this extension and we leave a detailed analysis to a future work.    
\end{remark}

\

\

\

\noindent \textbf{Acknowledgments:} The authors gratefully acknowledge funding by FAPDF which supports the project ``Rough paths, Malliavin calculus and related topics'' 00193-00000229 2021-21.
The research of FR was also partially supported by the  ANR-22-CE40-0015-01 project (SDAIM).

\bibliographystyle{acm}
\begin{quote}
\bibliography{../../BIBLIO_FILE_ORV/refORV}
\end{quote}
\end{document}